\documentclass[twoside,11pt]{amsart}
\usepackage{amsmath,latexsym,amssymb,times,enumerate,hyperref}

  
\usepackage{verbatim}

\setlength{\topmargin}{-.1in} \setlength{\oddsidemargin}{0.3in}
\setlength{\evensidemargin}{0.3in} \setlength{\textheight}{8.5in}
\setlength{\textwidth}{6in}

\def\sqr#1#2{{\vcenter{\hrule height.#2pt
        \hbox{\vrule width.#2pt height#1pt \kern#1pt
                \vrule width.#2pt}
        \hrule height.#2pt}}}

\usepackage{tikz}
\usepackage{pst-plot}
\usepackage{pst-node,pst-text,pst-3d,pstricks}
\usetikzlibrary{patterns}

\def\sqr#1#2{{\vcenter{\hrule height.#2pt
        \hbox{\vrule width.#2pt height#1pt \kern#1pt
                \vrule width.#2pt}
        \hrule height.#2pt}}}
\def\square{\mathchoice\sqr64\sqr64\sqr{4}3\sqr{3}3}
\def\QED{\hfill$\square$}

\def\n{{\mathfrak n}}

\newtheorem{Theorem}{Theorem}[section]

\newtheorem{Lemma}[Theorem]{Lemma}
\newtheorem{Corollary}[Theorem]{Corollary}
\newtheorem{Proposition}[Theorem]{Proposition}

\newtheorem{Notation}[Theorem]{Notation}
\newtheorem{Assumptions and Discussion}[Theorem]{Assumptions and Discussion}

\newtheorem{Remark}[Theorem]{Remark}
\newtheorem{Example}[Theorem]{Example}
\newtheorem{Definition}[Theorem]{Definition}

\newtheorem{Algorithm}[Theorem]{Algorithm}

\newcommand{\pd}{\mathop{\mathrm{pd}}\nolimits}
\renewcommand{\H}{\mathcal{H}}
\newcommand{\Q}{\mathcal{Q}}
\newcommand{\OO}{\mathcal{O}}
\newcommand{\s}{\mathcal{S}}

\begin{document}

\baselineskip=16pt

\title[Projective Dimension of Strings and Cycles]
{\Large\bf Projective Dimension of String and Cycle Hypergraphs}

\thanks{AMS 2010 {\em Mathematics Subject Classification}:
Primary 13A30; Secondary 13H15, 13B22, 13C14, 13C15, 13C40.}
\thanks{Keywords: projective dimension; square-free monomial ideals; hypergraphs; free resolutions.}
\thanks{$^1$ P. Mantero gratefully acknowledges the support of an AMS-Simons Travel Grant}

\author{Kuei-Nuan Lin}
\address{Smith College \\ Department of Mathematics and Statistics \\ Northampton, MA 01063, USA}
\email{klin@smith.edu\newline
\indent{\it URL:} \href{http://www.math.smith.edu/~klin/}{\tt http://math.smith.edu/$\sim$klin/}
}


\author[Paolo Mantero]{Paolo Mantero$^{1}$}
\address{University of California, Riverside \\ Department of Mathematics \\
 Riverside, CA 92521}
\email{mantero@math.ucr.edu\newline
\indent{\it URL:} \href{http://math.ucr.edu/~mantero/}{\tt http://math.ucr.edu/$\sim$mantero/}}

\date{October 7, 2013}

\vspace{-0.1in}

\begin{abstract}
We present a closed formula and a simple algorithmic procedure to compute the projective dimension of square-free monomial ideals associated to string or cycle hypergraphs. As an application, among these ideals we characterize all the  Cohen-Macaulay ones.
\end{abstract}

\maketitle

\vspace{-0.2in}

\section{Introduction}

Let $R=k[\mathcal{A}]$ be a polynomial ring over a field $k$ with
indeterminate set $\mathcal{A}$.
In the present paper, we are interested in computing the projective dimension $\pd(R/I)$ of a monomial ideal $I$. Since it is well-known that a polarization $P(I)$ has the same Betti numbers as $I$, we can restrict our attention to square-free monomial ideals $I$. To determine $\pd(R/I)$, we study the hypergraph associated to $I$ via a construction introduced by Kimura et. al \cite{KTY}. The construction is defined as follows: each minimal generator of $I$ corresponds to a vertex of the hypergraph, whereas the faces are defined by the relationships between the minimal generators of $I$ (see Definition \ref{H}). The hypergraph $\H$ obtained in this way is the dual hypergraph (cf. \cite{Be}) of the hypergraph  whose edge ideal is $I$.

This association of hypergraphs has been employed to prove the equality of arithmetical rank and projective dimension of classes of square-free monomial ideals \cite{KTY}, \cite{KTY2}, \cite{KRT}, study the normality of toric rings of square-free monomial ideals \cite{HL}, combinatorially classify almost complete intersection square-free monomial ideals, Cohen-Macaulay square-free monomial ideals of deviation at most two, and square-free monomial for which $\pd(R/I)\geq \mu(I)-1$ \cite{KTY}, \cite{KTY2}, \cite{KTY3}, compute the regularity of square-free monomial ideals \cite{LM}, and classify the square-free monomial ideals generated in degree two or having deviation at most two that are licci \cite{KTY3}.

Another reason for adopting this construction is the following: it is well-know that the minimal free resolution of the ideal $I$ is built from the syzygy relations of the minimal generators of $I$, see, for instance, \cite{GPW}. The hypergraph associated to $I$ provides a clear pictorial view of the relations between the minimal generators of $I$, hence it seems well-suited to compute projective dimensions. Moreover, since two ideals associated to the same hypergraph have the same projective dimension (Corollary \ref{same}), one can study the different classes of hypergraphs to understand the projective dimension of square-free monomial ideals. 

In the present paper, we focus on determining the projective dimension of ideals $I$ whose associated hypergraph $\mathcal{H}$ is a string or a cycle (see Definitions \ref{str} and \ref{Defcycle}). 

In Section 2 we prove several lemmas determining a procedure to compute recursively the projective dimension of any ideal associated to a string hypergraph (see Remark \ref{proc}). 

We devote Section 3 to prove our first main result, Theorem \ref{string}, providing the following closed formula for the projective dimension $\pd(\H)$ of any monomial ideal associated to a string $\H$:
$$\pd(\H)=\mu(\H)-b(\H)+M(\H).$$
Here, $\mu(\H)$ is the number of vertices of $\H$, $b(\H)$ depends on the number and distribution of open vertices in $\H$, and $M(\H)$ is an invariant dubbed the {\it modularity} of $\mathcal{H}$ -- see Sections 2 and 3, and Definition \ref{modularity}.
All these numbers can be easily computed. Hence, the above formula provides a fast way to compute projective dimensions via simple combinatorial counting (see, for instance, Examples \ref{DS2}, \ref{mod}, and Corollary \ref{examples}).

In Section 4 we turn our attention to the projective dimension of ideals associated to cycles. Remarkably, the formula we obtain in this case is the same formula found for strings (Theorem \ref{cycles2}) although the proof of this second main result is more involved. We then classify all Cohen-Macaulay ideals associated to string or cycle hypergraphs (Theorem \ref{CM}).
Note that our main results are not covered by previous work of Dao-Schweig \cite{DS} (see Examples \ref{DS2} and \ref{DS}) or Kimura-Terai-Yoshida \cite{KTY}, \cite{KTY2}, which highlight different focuses or approaches.

In Section 5 we write the algorithmic procedures to compute the projective dimension of ideals associated to strings and cycles and provide several examples. The reason for writing these algorithmic processes 
is that the computation of the modularity of $\H$ is time-consuming, whereas Algorithms \ref{alg} and \ref{alg2} are much faster procedures.

As a final remark, the main results of this paper yield that $\pd(R/I)$ is independent of the characteristic of the base field $k$ for all square-free monomial ideals $I$ associated to strings or cycles.

We would like to thank K. Kimura for relevant suggestions that lead to improvements of the paper, H. T. H\`a for helpful observations, and J. McCullough for discussions regarding early material of this manuscript.

\section{First results}

The notion of hypergraph employed in this paper was introduced by Kimura, Terai and Yoshida \cite{KTY}.  Let $[\mu]$ denote the subset $\{1,\ldots,\mu\}$ of $\mathbb N$.
\begin{Definition}\label{H}
Let $V = [\mu]$ and $\mathcal P(V)$ denote its power set.  A subset $\H\subseteq \mathcal{P}(V)$ is called a hypergraph with vertex set $V$ if	
$\bigcup\limits_{F\in \H}F=V$.

$\H$ is {\em separated} if, in addition, for every $1\leq j_1<j_2\leq \mu$, there exist faces $F_1,F_2\in \H$ so that $j_1\in F_1 \cap (V\setminus F_2)$ and $j_2\in F_2 \cap (V\setminus F_1)$.
\end{Definition}

Let $I\subseteq k[\mathcal{A}]$ be a square-free monomial ideal, where $\mathcal{A}$ is an alphabet set. Following \cite{KTY}, we associate to $I$ a (unique) separated hypergraph in the following way. Let $\{m_1,\dots,m_{\mu}\}$ be a minimal monomial generating set of $I$, set $V =[\mu]$, one defines the hypergraph $\H(I)$ as follows:
$F$ is a face of $\H(I)$ if and only if there is an $a\in\mathcal{A}$ such that for all $j\in V$, $m_j$ is divisible by $a$ if and only if $j\in F$. The hypergraph $\H(I)$ is separated because of the minimality of the generating set $m_1,\dots,m_{\mu}$. It is worth noting that $\H(I)$ is the dual hypergaph (see \cite{Be}) of the hypergraph whose edge ideal is $I$.

Conversely, to any separated hypergraph $\H$ one can associate a square-free monomial ideal which, by separability of $\H$, is minimally generated by the vertices of the hypergraph. The ideal obtained in this way is far from being unique. For instance, the two ideals $I=(ab,bc)$ and $J=(abcde,def)$ correspond to the same separated hypergraph $\H$ on the vertex set $V=\{1,2\}$ whose faces are $\{1\},\{2\},\{1,2\}$.

In \cite {LM}, labelled hypergraphs were introduced to make the correspondence between square-free monomial ideals and labelled separated hypergraphs become one to one. The authors investigated the regularity of square-free monomial ideals, for which the labelling is, in fact, essential. In this paper, however, we are interested in the projective dimension of square-free monomial ideals, and Corollary \ref{same} shows that, for this purpose, the labelling is not needed. In fact, more generally, we show that all the total Betti numbers are independent of the labelling.
Recall that $\beta_l(R/J)={\rm dim}_k{\rm Tor}_l^R(R/J,k)$ denote the $l$-th total Betti number of $R/J$. 
\begin{Proposition}\label{same2}
If $I_1$ and $I_2$ are square-free monomial ideals associated to the same separated hypergraph $\H$, then $\beta_l(I_1)=\beta_l(I_2)$ for every $l$.
\end{Proposition}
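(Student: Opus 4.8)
The plan is to reduce everything to a purely combinatorial/algebraic statement about the Taylor complex (or, better, about $\mathrm{Tor}$ computed via the Koszul complex on the variables), and then observe that the relevant data are encoded entirely by the separated hypergraph $\H$. Recall that for a square-free monomial ideal $I$ with minimal generators $m_1,\dots,m_\mu$ in $R=k[\mathcal A]$, the total Betti numbers can be computed as $\beta_l(I)=\sum_{\mathbf a}\dim_k \widetilde H_{l-1}(\Delta_{\mathbf a};k)$, where the sum runs over multidegrees $\mathbf a$ and $\Delta_{\mathbf a}$ is the simplicial complex on $[\mu]$ whose faces are the subsets $\sigma\subseteq[\mu]$ with $\mathrm{lcm}(m_j : j\in\sigma)$ dividing $x^{\mathbf a}$ (this is the standard lcm-lattice / Hochster-type description, e.g. via \cite{GPW}). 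Since the ideals are square-free, only square-free multidegrees $\mathbf a$ contribute a nonzero term, so $\beta_l(I)=\sum_{S\subseteq \mathcal A}\dim_k\widetilde H_{l-1}(\Delta_S;k)$, where $\Delta_S=\{\sigma\subseteq[\mu] : \mathrm{supp}(\mathrm{lcm}(m_j:j\in\sigma))\subseteq S\}$.

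First I would rephrase $\Delta_S$ in terms of the hypergraph. By the definition of $\H=\H(I)$, a variable $a\in\mathcal A$ divides $m_j$ exactly when $j$ lies in the face $F_a\in\H$ that $a$ determines; so $\mathrm{supp}(m_j)$ is determined by which faces of $\H$ contain $j$, and more generally $\mathrm{supp}(\mathrm{lcm}(m_j:j\in\sigma))=\bigcup_{j\in\sigma}\{a : j\in F_a\}$. The key point is that the combinatorial type of the whole collection $\{\Delta_S\}_{S\subseteq\mathcal A}$ depends only on $\H$, not on the particular ideal: since $\sigma$ is a face of $\Delta_S$ iff every $j\in\sigma$ lies only in faces $F_a$ with $a\in S$, the complex $\Delta_S$ is determined by the partition of the vertex set of $\H$ induced by the faces in $\{F_a : a\in S\}$ versus the faces in $\{F_a : a\notin S\}$. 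Two ideals $I_1,I_2$ associated to $\H$ may use different alphabets and may have several variables producing the same face of $\H$, but repeated variables defining the same face $F$ do not change any $\Delta_S$ beyond duplicating an index, and hence do not change the multiset of homotopy types $\{\Delta_S\}$ as $S$ ranges over subsets; so I would make precise the claim that the \emph{multiset} $\{[\Delta_S] : S\subseteq\mathcal A,\ \Delta_S\neq\{\emptyset\}\}$ of nontrivial reduced-homology-contributing complexes is an invariant of $\H$. Concretely, for each face $F\in\H$ pick the subset $T(F)\subseteq\mathcal A_i$ of variables of $I_i$ realizing $F$; then $S$ contributes the same $\Delta_S$ as any $S'$ with $T(F)\cap S\neq\emptyset \iff T(F)\cap S'\neq\emptyset$ for all $F$, and one checks that the number of $S$ in each such equivalence class is irrelevant to $\widetilde H_*(\Delta_S)$ because $\Delta_S$ only depends on the class. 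Summing the Hochster formula over equivalence classes of $S$ (one representative per class suffices once we note that within a class all $\Delta_S$ are literally equal) gives $\beta_l(I_i)=\sum_{\text{classes } c}\dim_k\widetilde H_{l-1}(\Delta_c;k)$ with the right-hand side manifestly depending only on $\H$. Therefore $\beta_l(I_1)=\beta_l(I_2)$ for all $l$.

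The main obstacle I anticipate is bookkeeping the alphabet identification cleanly: one must be careful that $\H(I)$ only records the \emph{pattern} of divisibility, so a single face of $\H$ may correspond to one variable in $I_1$ and to several (or to a higher power, after polarization — but here everything is already square-free) distinct variables in $I_2$, and conversely $\mathcal A_1$ and $\mathcal A_2$ may have very different sizes. The cleanest way around this is probably to introduce an auxiliary ``universal'' ideal $I_\H$ attached to $\H$ with exactly one variable $x_F$ per face $F\in\H$ (namely $m_j^\H=\prod_{F\ni j}x_F$), prove $\beta_l(I_i)=\beta_l(I_\H)$ for $i=1,2$ by the equivalence-class argument above (the surjection $\mathcal A_i\twoheadrightarrow \H$, $a\mapsto F_a$, induces the needed identification of lcm-supports), and conclude by transitivity. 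An alternative, slicker route avoiding Hochster's formula: show directly that the lcm-lattices $\mathrm{LCM}(I_1)$ and $\mathrm{LCM}(I_2)$ are isomorphic as lattices — the join-irreducibles are the $m_j$, and the lattice structure on joins is controlled by which variables (equivalently, which faces of $\H$) appear in the various lcm's — and then invoke Gasharov--Peeva--Welker \cite{GPW}, which says that $R/I$ and $R/I'$ have the same total Betti numbers (indeed the same graded Betti numbers up to the lattice iso) whenever $\mathrm{LCM}(I)\cong\mathrm{LCM}(I')$. Under that approach the entire proof is: (i) $\mathrm{LCM}(I)$ depends only on $\H$; (ii) apply \cite{GPW}. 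I would present the lcm-lattice version as the main argument, since it is shortest and makes the role of $\H$ most transparent, and the only real work is step (i), which is a direct unwinding of Definition \ref{H}.
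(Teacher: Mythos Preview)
Your lcm-lattice argument is correct and is genuinely different from the paper's proof. You observe that for $\sigma\subseteq[\mu]$ the support of $m_\sigma$ is $\bigsqcup_{F\in\H,\;\sigma\cap F\neq\emptyset} T(F)$ (with $T(F)$ the set of variables realizing the face $F$), so $m_\sigma\mid m_\tau$ iff $\{F:\sigma\cap F\neq\emptyset\}\subseteq\{F:\tau\cap F\neq\emptyset\}$; hence the lcm-lattice is canonically isomorphic to the poset $\{\{F\in\H:\sigma\cap F\neq\emptyset\}:\sigma\subseteq[\mu]\}$, which depends only on $\H$. Then \cite{GPW} finishes. This is short and conceptual. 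The paper instead argues directly with rings: for each ideal it passes to an auxiliary ideal with exactly one variable per face (your $I_\H$) by showing $R$ is free over the subalgebra $k[\{g_F\}]$ generated by the maximal face-monomials, and then noting the obvious $k$-algebra isomorphism identifies the two auxiliary ideals. The paper's route is more elementary in that it avoids invoking the GPW theorem and uses only faithful flatness; yours is more transparent about \emph{why} the hypergraph suffices (it pins down the lcm-lattice), and it immediately explains the appearance of \cite{GPW} in the introduction.

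One caution about your first sketch: the displayed formula $\beta_l(I)=\sum_{S\subseteq\mathcal A}\dim_k\widetilde H_{l-1}(\Delta_S;k)$ with $\Delta_S=\{\sigma:\mathrm{supp}(m_\sigma)\subseteq S\}$ is not correct as written (already $I=(x,y)$ gives the wrong answer), and the subsequent ``one representative per equivalence class suffices'' step would not repair a sum over \emph{all} $S$. The correct Hochster/GPW statement sums only over elements $m$ of the lcm-lattice and uses the complex $\{\sigma:m_\sigma\text{ strictly divides }m\}$; once you restrict to those $m$, the equivalence-class bookkeeping becomes unnecessary because you are already summing over the lcm-lattice, which you have shown depends only on $\H$. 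Since you declare the lcm-lattice version to be your main argument, this does not affect the validity of your proposal, but you should drop or rewrite the Hochster paragraph before submitting.
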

\begin{proof}
After possibly enlarging the polynomial ring, we may assume $I_1$ and $I_2$ lie in the same polynomial ring $R$. It suffices to show $\beta_l(R/I_1)=\beta_l(R/I_2)$ for every $l$. 
\medskip

{\bf Claim.} {\it If $I=(m_1,\ldots,m_u)\subseteq R$ is a square-free monomial ideal, then there exists a square-free monomial ideal $I'=(m_1',\ldots,m_{u}') \subseteq R$ such that $I$ and $I'$ have the same associated hypergraph, for each face $F_i$, there exists a unique variable $a_i$ such that $a_i|m_j'$ if and only if $j\in F_i$, and $\beta_l(R/I)=\beta_l(R/I')$ for every $l$.}
\medskip

We prove the claim. For each face $F_i$ of the hypergraph associated to $I$, there exists a maximal monomial $g_i$ such that $g_i|m_j$ if and only if $j\in F_i$. For every $i$, let $a_i$ be a variable dividing $g_i$. The natural inclusion $S=k[\{g_i\}]\subseteq R$ makes $R$ a free module over $S$, because the $g_i$ are square-free and for every variable $b$ of $R$, either $b$ does not divide any of the $g_i$ or $b$ appears in the monomial support of exactly one of the $g_i$. We then  have $\beta_l(S/I)=\beta_l(R/I)$ for every $l$. 

Next, since the $g_i$ form a regular sequence of ${\rm dim}\,R'$ elements, the homomorphism of $k$-algebras $\Phi:S\rightarrow R'=k[\{a_i\}]$ given by $\Phi(g_i)=a_i$ is actually an isomorphism. Setting $\Phi(I)R'=I'=(m_1',\ldots,m_{u}')$, we then have $\beta_l(S/I)=\beta_l(R'/I')$ for every $l$. Combining these equalities with the above, we obtain $\beta_l(R/I)=\beta_l(S/I)=\beta_l(R'/I')$. Moreover, since $R'\subseteq R$ is faithfully flat, we obtain $\beta_l(R'/I')=\beta_l(R/I')$ for every $l$, which yields $\beta_l(R/I)=\beta_l(R/I')$. Finally, by construction, for each face $F_i$, there exists a unique variable $a_i$ such that $a_i|m_j'$ if and only if $j\in F_i$.  This concludes the proof of the claim.

Now, by the Claim, we can assume $I_1=(m_{1},\cdots,m_{\mu})\subseteq R$ and $I_{2}=(n_{1},\cdots,n_{\mu})\subseteq R$ have the following property: 
for each face $F_i$ of $\H$, there exist unique variables $a_i$ and $b_i$ of $R$ such that $a_i|m_j$ if and only if $j\in F_i$ and $b_i|n_j$ if and only if $j\in F_i$.
Let $R_1= k\{[a_i\}]$ and $R_2=k[\{b_i\}]$, and note that for every $j=1,2$, we have $I_j\subseteq R_j$, $\beta_l(R/I_j)=\beta_l(R_j/I_j)$ for every $l$, and every variable of $R_j$ divides at least one minimal generator of $I_j$. It then suffices to show $\beta_l(R_1/I_1)=\beta_l(R_2/I_2)$. This holds because the homomorphism of $k$-algebras $\Phi:R_1\rightarrow R_2$ defined by $\Phi(a_i)=b_i$ for every $i$ is actually an isomorphism and, by definition of $\Phi$, one has $\Phi(I_1)R_2=I_2$.
\end{proof}

\begin{Remark}
The Claim in the proof of Proposition \ref{same2} could be rephrased using the terminology of \cite{KRT}. Indeed, it proves the existence of a minimal generic set $G$ (see \cite[Definition~5.1]{KRT}) with respect to the property $\mathcal B$ of having fixed total Betti numbers $\{\beta_i\}_{i\in\mathbb N}$, and satisfying the additional property that the ideals of $G$ are generated in minimal degrees.
\end{Remark}

\begin{Corollary}\label{same}
If $I_1$ and $I_2$ are square-free monomial ideals associated to the same separated hypergraph $\H$, then  $\pd(R/I_1)=\pd(R/I_2)$.
\end{Corollary}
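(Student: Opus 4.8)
The plan is to deduce Corollary \ref{same} directly from Proposition \ref{same2} by recalling that the projective dimension of a finitely generated module over a Noetherian ring is detected by the (non)vanishing of its total Betti numbers. Concretely, for any square-free monomial ideal $I\subseteq R$ one has
\[
\pd(R/I)=\max\{l\mid \beta_l(R/I)\neq 0\}=\max\{l\mid \beta_l(I)\neq 0\}+1,
\]
the last equality coming from the short exact sequence $0\to I\to R\to R/I\to 0$, which shifts homological degrees by one (so $\beta_l(R/I)=\beta_{l-1}(I)$ for $l\geq 1$). Thus $\pd(R/I)$ is completely determined by the sequence of total Betti numbers of $I$.

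First I would note, exactly as in the proof of Proposition \ref{same2}, that after enlarging the polynomial ring we may assume $I_1$ and $I_2$ sit inside a common polynomial ring $R$; this does not affect projective dimension since the enlargement is a faithfully flat (indeed free) extension. Then I would apply Proposition \ref{same2} to get $\beta_l(I_1)=\beta_l(I_2)$ for every $l$, hence $\beta_l(R/I_1)=\beta_l(R/I_2)$ for every $l$ by the degree shift above. Finally, taking the largest $l$ with nonvanishing Betti number on both sides yields $\pd(R/I_1)=\pd(R/I_2)$, which is the claim.

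There is essentially no obstacle here: the corollary is a formal consequence of the proposition together with the standard homological characterization of projective dimension, so the write-up is just a few lines. The only point worth stating carefully is the passage to a common ambient ring, which is handled the same way as in Proposition \ref{same2}.

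\begin{proof}
As in the proof of Proposition \ref{same2}, after possibly enlarging the polynomial ring we may assume $I_1$ and $I_2$ are ideals of the same polynomial ring $R$; enlarging the ring is a faithfully flat extension and hence preserves all total Betti numbers, and in particular the projective dimension. By Proposition \ref{same2} we have $\beta_l(I_1)=\beta_l(I_2)$ for every $l$. From the short exact sequence $0\to I_j\to R\to R/I_j\to 0$ we get $\beta_l(R/I_j)=\beta_{l-1}(I_j)$ for all $l\geq 1$ and $\beta_0(R/I_j)=1$, so $\beta_l(R/I_1)=\beta_l(R/I_2)$ for every $l$. Since $\pd(R/I_j)=\max\{l\mid \beta_l(R/I_j)\neq 0\}$, we conclude $\pd(R/I_1)=\pd(R/I_2)$.
\end{proof}
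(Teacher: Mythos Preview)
Your proof is correct and is exactly the argument the paper has in mind: Corollary~\ref{same} is stated without proof immediately after Proposition~\ref{same2}, and the intended deduction is precisely that equal total Betti numbers force equal projective dimension via $\pd(R/I)=\max\{l\mid\beta_l(R/I)\neq 0\}$. Your write-up is slightly more detailed than necessary (the degree shift between $\beta_l(I)$ and $\beta_l(R/I)$ could be omitted since Proposition~\ref{same2} already establishes $\beta_l(R/I_1)=\beta_l(R/I_2)$ in its proof), but there is no gap.
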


We now set a few pieces of notation that will be employed throughout this paper. Since non-separated hypergraphs correspond to non-minimal generating sets of square-free monomial ideals,  we may harmlessly assume all hypergraphs are separated, and then by 'hypergraph' we always mean 'separated hypergraph'.
\begin{Notation}\label{not}
Let $\H$ be a hypergraph, and let $1,2,\ldots,\mu$ be the vertices of $\H$.
 \begin{itemize}
\item We set $\pd(\H)$ for $\pd(R/I)$, where $I$ is any square-free monomial ideal associated to $\H$. By Corollary \ref{same} this number is well-defined. We call this number the {\em projective dimension} of $\H$.
\item $I(\H)$ is an ideal minimally generated by $m_1 ,...m_{\mu}$ such that for every face $F$ in $\H$, there is a unique variable $a$ such that $a|m_k$ if and only if $k$ is in $F$. To compute $\pd(\H)$ we will always compute $\pd(R/I(\H))$.
\item $m_1,\ldots,m_{\mu}$ are the minimal generators of $I(\H)$ corresponding to the vertices $1,2,\ldots,\mu$;
\item We define $I_i=(m_{i+1},\ldots,m_{\mu})$, and we let $\H_i$ be the hypergraph associated to $I_i$.
\item We set $J_1=I_1:m_1$ and $\Q_1$ is the hypergraph associated to $J_1$. For the reader's convenience we will sometimes write $\Q_1=\H_1:v_1$.
\end{itemize}
\end{Notation} 

We will frequently need to distinguish open vertices from closed vertices of a hypergraph, hence we recall these two definitions. A vertex $v$ of $\H$ is {\it closed} if $\{v\}$ is a face of $\H$, that is, if $\{v\}\in \H$. A vertex $v$ is {\it open} if it is not closed. In the figures, closed vertices are represented by filled dots, and open vertices by unfilled dots. We also recall that a vertex $v_1$ is a {\it neighbor} of a vertex $v_2$ if there is a face of $\H$ containing both $v_1$ and $v_2$. We begin by stating a lemma that the first author learnt from the anonymous referee of the paper \cite{LM}.
\begin{Lemma}\label{ref}
Let $\H$ be a hypergraph. If $\{1\}\in \H$, then $\pd(\H)={\rm max}\{\pd(\H_1),\pd(\Q_1)+1\}$.
\end{Lemma}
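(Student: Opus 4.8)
The plan is to work with $I = I(\mathcal H)$ and exploit the short exact sequence coming from the first generator $m_1$. Since $\{1\}\in\mathcal H$, the vertex $1$ is closed, so there is a variable $a_1$ dividing $m_1$ and no other minimal generator of $I$; this is the structural fact that makes the argument clean. Write $I = I_1 + (m_1)$ where $I_1 = (m_2,\ldots,m_\mu)$. Then $(m_1)\cap I_1 = m_1\cdot(I_1:m_1) = m_1 J_1$, and the standard short exact sequence reads
\[
0 \longrightarrow R/(I_1:m_1) \xrightarrow{\ \cdot m_1\ } R/I_1 \longrightarrow R/I \longrightarrow 0,
\]
i.e.\ $0 \to (R/J_1)(-\deg m_1) \to R/I_1 \to R/I \to 0$. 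From the associated long exact sequence in $\operatorname{Tor}(-,k)$ (equivalently, the standard depth/pd inequalities for a short exact sequence) one gets $\pd(R/I) \le \max\{\pd(R/I_1),\,\pd(R/J_1)+1\}$.

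For the reverse inequality, the key point is that the map $R/J_1 \to R/I_1$ is multiplication by $m_1$, and I would use the variable $a_1$ to show this map is, up to the relevant homological data, a split inclusion in a suitable sense — or more directly, argue that no cancellation can occur in the long exact sequence. Concretely: since $a_1 \mid m_1$ but $a_1$ divides no generator of $I_1$ and no generator of $J_1$ (as $J_1 = I_1 : m_1$ and $a_1$ is a nonzerodivisor on $R/I_1$), the generators of $I$, $I_1$, $J_1$ can be chosen in disjoint variable sets appropriately, so that the connecting homomorphisms $\operatorname{Tor}_{l+1}^R(R/I,k) \to \operatorname{Tor}_l^R(R/J_1,k)(-\deg m_1)$ and $\operatorname{Tor}_l^R(R/I_1,k)\to\operatorname{Tor}_l^R(R/I,k)$ behave as expected. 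A cleaner route: multiplication by $m_1$ on $R/I_1$ has kernel exactly $J_1/I_1$, and because $a_1$ is a nonzerodivisor modulo $I_1$ one can check that $\operatorname{Tor}_l^R(\cdot m_1) = 0$ for all $l > 0$ — this forces the long exact sequence to split into short exact sequences
\[
0 \to \operatorname{Tor}_{l}^R(R/I_1,k) \to \operatorname{Tor}_l^R(R/I,k) \to \operatorname{Tor}_{l-1}^R(R/J_1,k) \to 0
\]
(with the appropriate internal degree shift), from which $\pd(R/I) = \max\{\pd(R/I_1)+0,\ \pd(R/J_1)+1\}$ follows immediately, and hence $\pd(\mathcal H) = \max\{\pd(\mathcal H_1),\,\pd(\mathcal Q_1)+1\}$ by Notation \ref{not} (recalling $\mathcal H_1 = \mathcal H(I_1)$ and $\mathcal Q_1 = \mathcal H(J_1)$).

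The main obstacle is establishing that $\operatorname{Tor}_l^R(R/I_1,k) \xrightarrow{\cdot m_1} \operatorname{Tor}_l^R(R/I_1,k)$ (or rather the relevant map feeding into the long exact sequence) vanishes for $l>0$, i.e.\ ruling out the possibility of a consecutive cancellation in the mapping cone / long exact sequence that would make $\pd(R/I)$ strictly smaller than the max. This is exactly where closedness of vertex $1$ is used: the private variable $a_1$ of $m_1$ guarantees that $m_1 \notin \mathfrak m I_1$ in a way that propagates up the resolution. I would handle it either by a direct multidegree argument (tracking the variable $a_1$, which appears in every term of the subcomplex coming from $R/J_1$ but in no term of the resolution of $R/I_1$, preventing cancellation) or by invoking that $R/I_1$ is a module over $R/(a_1)$ while the subquotient is not. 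Once the cancellation is excluded, the theorem is immediate from the long exact sequence.
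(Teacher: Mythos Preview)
Your proposal is correct and follows essentially the same route as the paper: both use the short exact sequence $0\to R/J_1\xrightarrow{\,\cdot m_1\,}R/I_1\to R/I\to 0$ and then rule out cancellation by tracking the private variable $a_1$ of $m_1$---the paper does this by imposing the bigrading $\deg(a_1)=(0,1)$, $\deg(b)=(1,0)$ otherwise, and observing that every entry of the lifted chain map is divisible by $a_1$, which is exactly your ``multidegree argument.'' (Minor slip: the map whose vanishing you need is $\operatorname{Tor}_l^R(R/J_1,k)\to\operatorname{Tor}_l^R(R/I_1,k)$, not $\operatorname{Tor}_l^R(R/I_1,k)\to\operatorname{Tor}_l^R(R/I_1,k)$, as you yourself note parenthetically.)
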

\begin{proof}
Recall that $R=k[\mathcal A]$. By assumption, there exists a variable $a \in \mathcal A$ such that $a| m_1$ and $a\nmid m_i$ for any $i>1$.
We now consider $R=k[\mathcal A]$ multigraded via the following grading: ${\rm deg}(a)=(0,1)$ and  ${\rm deg}(b)=(1,0)$ for every $b\in \mathcal A$ with $b\neq a$. Let $d_1+1\geq 1$ be the total degree of $m_1$.  The statement follows if we show that the mapping cone of the following short exact sequence of modules (with multigraded degree $0$ maps) gives a minimal free resolution of $R/I(\H)$
$$0\longrightarrow R/J_1[-(d_1,1)] \stackrel{\cdot m_1}{\longrightarrow} R/I_1 \longrightarrow R/I(\H) \longrightarrow 0$$
To see this, let $F_{\bullet}$ and $G_{\bullet}$ be minimal graded free resolutions of $R/J_1$ and $R/I_1$, respectively. Let $\alpha:G_{\bullet}[-(d_1,1)]\rightarrow F_{\bullet}$ be a lifting of the multigraded homogeneous injection $R/J_1[-(d_1,1)]\stackrel{\cdot m_1}{\rightarrow}R/I_1$. 
Then every graded twist appearing in $G_{\bullet}[-(d_1,1)]$ has bi-grading $(*,1)$, whereas all twists in $F_{\bullet}$ have bi-grading $(*,0)$. Hence, for every $i$, every non-zero entry of a matrix corresponding to the map $G_i[-(d_1,1)] \rightarrow F_i$ is divisible by $a$. Hence, $\alpha$ is minimal, and then the mapping cone gives a minimal free resolution of $R/I(\H)$.
\end{proof}

We recall that a hypergraph $\H$ is {\em saturated} if $\{i\}\in \H$ for all $i=1,\ldots,\mu$, that is, if all the vertices of $\H$ are closed. The next result is well-known, see, for instance, \cite{F}.
\begin{Proposition}\label{saturated}
Let $\H$ be an hypergraph with $\mu$ vertices, then $\pd(\H)\leq \mu$ and equality holds if and only if $\H$ is saturated.
\end{Proposition}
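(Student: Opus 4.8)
The plan is to prove the inequality $\pd(\H) \leq \mu$ first and then characterize equality. For the inequality, let $I = I(\H)$ be a square-free monomial ideal associated to $\H$, minimally generated by $\mu$ monomials $m_1, \ldots, m_\mu$. By the Taylor complex (or Auslander--Buchsbaum, or simply the Koszul-type resolution on the generators), $R/I$ always has a free resolution of length at most $\mu$, so $\pd(R/I) \leq \mu$. Alternatively, and in keeping with the tools of this section, I would argue by induction on $\mu$ using Lemma \ref{ref}: if $\H$ has a closed vertex, say $\{1\} \in \H$, then $\pd(\H) = \max\{\pd(\H_1), \pd(\Q_1)+1\}$; by induction $\pd(\H_1) \leq \mu - 1 \leq \mu$ and $\pd(\Q_1) \leq \mu - 1$, so $\pd(\Q_1) + 1 \leq \mu$, giving $\pd(\H) \leq \mu$. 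If $\H$ has no closed vertex, a separate (easier) bound is needed, but in fact for the inequality the Taylor complex argument is cleanest and handles all cases uniformly.

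For the equality, I would again induct on $\mu$ via Lemma \ref{ref}. Suppose $\H$ is saturated, so every vertex is closed; in particular $\{1\} \in \H$. Then $\H_1$ is the hypergraph on vertices $\{2, \ldots, \mu\}$ obtained by deleting vertex $1$, and it is again saturated (deleting a vertex preserves the property that all remaining singletons are faces, since the singleton faces $\{2\}, \ldots, \{\mu\}$ are unaffected by removing the generator $m_1$, using that $m_1$ contains the private variable $a$ with $a \nmid m_i$ for $i > 1$). Wait — one must be slightly careful: removing $m_1$ could in principle change which subsets are faces among $\{2,\ldots,\mu\}$, but it cannot destroy the singleton faces $\{j\}$ for $j \geq 2$, because the variable witnessing $\{j\} \in \H$ divides $m_j$ and possibly other $m_i$ with $i \neq 1$; restricting to $i \geq 2$ it still witnesses a face of $\H_1$ containing $j$, and minimality of the generating set $m_2, \ldots, m_\mu$ keeps things separated. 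Hence $\H_1$ is saturated on $\mu - 1$ vertices, so by induction $\pd(\H_1) = \mu - 1$. Combined with Lemma \ref{ref} and the already-established upper bound $\pd(\Q_1) + 1 \leq \mu$, we get $\pd(\H) = \max\{\mu - 1, \pd(\Q_1)+1\} \leq \mu$ and $\pd(\H) \geq \pd(\H_1) = \mu - 1$; to push to equality $\mu$ I would instead use that $\pd(\Q_1) + 1 = \mu$, i.e. $\pd(\Q_1) = \mu - 1$. But $\Q_1$ has at most $\mu - 1$ vertices, so this forces $\Q_1$ saturated with exactly $\mu - 1$ vertices; verifying $\mu(\Q_1) = \mu - 1$ and saturation of $\Q_1$ is the crux.

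For the converse direction (if $\H$ is not saturated then $\pd(\H) < \mu$): pick an open vertex, and after reindexing I would try to arrange it so a localization/colon argument strictly drops the bound — the point being that an open vertex $v$ means $m_v$ shares all its variables with other generators, so $m_v$ is "redundant" in a homological sense that costs one in projective dimension. Concretely I would induct: if $\H$ has a closed vertex $\{1\}$, apply Lemma \ref{ref} with the observation that $\H_1$ still has an open vertex and $\Q_1$ has $\leq \mu-1$ vertices, so $\pd(\H_1) \leq \mu - 2$ (induction) and $\pd(\Q_1)+1 \leq \mu$, hence $\pd(\H) \leq \mu - 1 < \mu$ — wait, this needs $\pd(\Q_1) \leq \mu - 2$, which requires knowing $\Q_1$ is not saturated or has fewer vertices; if $\H$ has no closed vertex at all, then I claim $\pd(\H) \leq \mu - 1$ directly because the Taylor complex is then non-minimal in top degree (the unique top Taylor generator corresponds to $\mathrm{lcm}(m_1,\ldots,m_\mu)$, and when no vertex is closed there is always a strictly smaller lcm sub-generator mapping to it with a unit entry). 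The main obstacle is this last claim — controlling the top of the resolution when $\H$ has no closed vertex — and the bookkeeping that $\Q_1$ stays non-saturated (or loses a vertex) in the inductive step; this is exactly the content cited to \cite{F}, and I would lean on the combinatorial description of the Lyubeznik/Taylor resolution there, or alternatively invoke the known fact that $\pd(R/I) = \mu$ for a square-free monomial ideal iff $\mathrm{lcm}$ of all generators strictly dominates every proper sub-lcm, which is precisely saturation of $\H$.
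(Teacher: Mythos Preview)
The paper does not actually prove Proposition~\ref{saturated}: it simply states that the result is well-known and cites Fr\"oberg~\cite{F}. So there is no ``paper's own proof'' to compare against, only the standard argument implicit in that citation.

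Your proposal contains the correct argument, but it is buried at the very end after a long inductive detour that you yourself recognize is problematic. The clean route is the one you sketch last: the Taylor complex is always a free resolution of $R/I$ of length $\mu$, giving $\pd(R/I)\leq \mu$; and since $T_\mu$ has rank one, $\beta_\mu(R/I)\neq 0$ if and only if every entry of $\delta_\mu$ lies in the maximal ideal, i.e., if and only if for every $i$ the monomial $\frac{\mathrm{lcm}(m_1,\ldots,m_\mu)}{\mathrm{lcm}(m_1,\ldots,\widehat{m_i},\ldots,m_\mu)}$ is nontrivial. This happens precisely when each $m_i$ contains a variable dividing no other generator, which is exactly the condition $\{i\}\in\H$ for all $i$. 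That is the whole proof, and it is the standard one.

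The inductive approach via Lemma~\ref{ref} that occupies most of your writeup is not wrong in spirit, but the obstacles you flag are real. In the ``saturated $\Rightarrow$ equality'' direction you eventually need $\pd(\Q_1)=\mu-1$, which does hold (each $m_i'$ for $i\geq 2$ retains its private variable, so $\Q_1$ is saturated on $\mu-1$ vertices), but you never quite nail this down. In the ``not saturated $\Rightarrow$ strict inequality'' direction, your worry is justified: removing a closed vertex $1$ can close a previously open vertex $j$ in $\H_1$ (if some variable divided only $m_1$ and $m_j$), so ``$\H_1$ still has an open vertex'' is not automatic, and the induction does not go through cleanly. You would do better to drop the induction entirely and present only the Taylor-complex argument.
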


Given two hypergraphs, $\H$ and $\H'$, one writes $\H'\subseteq \H$ if all the faces of $\H'$ are also faces of $\H$.
The following statement is well-known and similar to the statement of \cite[Proposition~3.2]{KRT}). We include a short proof for the reader's convenience.
\begin{Lemma}\label{loc}
If $\H'\subseteq\H$ are hypergraphs with $\mu(\H)=\mu(\H')$, then $\pd(\H')\leq\pd(\H)$.
\end{Lemma}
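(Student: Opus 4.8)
The plan is to exploit the defining property of the ideals $I(\H)$ and $I(\H')$ from Notation \ref{not}: both are generated by $\mu$ monomials, with one distinguished variable per face. I would begin by choosing a polynomial ring $R$ large enough to contain representatives of both hypergraphs. For each face $F$ of $\H$, introduce a variable $x_F$, and set $m_k = \prod_{F \ni k} x_F$ for $k = 1, \ldots, \mu$; this defines $I = I(\H) = (m_1, \ldots, m_\mu)$. Now, since $\H' \subseteq \H$, every face of $\H'$ is a face of $\H$, so the variables $x_F$ with $F \in \H'$ form a subset of those with $F \in \H$. Setting $n_k = \prod_{F \ni k, \, F \in \H'} x_F$ gives $I' = I(\H') = (n_1, \ldots, n_\mu)$ in the same ring $R$.

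The key observation is that $n_k$ divides $m_k$ for every $k$, and moreover $m_k = n_k \cdot w_k$ where $w_k = \prod_{F \ni k,\, F \in \H \setminus \H'} x_F$ involves only variables $x_F$ with $F \notin \H'$. Thus, if we specialize all variables $x_F$ with $F \in \H \setminus \H'$ to $1$ — equivalently, pass to the quotient $R \to R/(x_F - 1 : F \in \H \setminus \H')$, or better, work in the subring $R' = k[x_F : F \in \H']$ — the ideal $I$ contracts to $I'$. Concretely, $R'$ is a polynomial ring, $R$ is free (indeed polynomial) over $R'$, and $I \cap R' \supseteq I'$; more precisely one wants $I R \cap$ (after the specialization map $\phi$ sending the extra variables to $1$) to have image $I'$, i.e. $\phi(m_k) = n_k$. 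Then $R/I' \cong$ a quotient: there is a surjection $R/I \twoheadrightarrow$ something, but the cleanest route is to note $I(\H')$ is obtained from $I(\H)$ by setting the extra variables to $1$, and this operation cannot increase projective dimension. The standard way to say this rigorously is: $R/I' \cong (R/I) \otimes_{R'} R''$ for an appropriate base change that is still flat, or — avoiding base change subtleties — that $I' = (I : \text{monomial})$-type simplifications chained together, each of which does not increase $\pd$ by the well-known behavior of projective dimension under such elementary operations; alternatively invoke that each $n_k \mid m_k$ and apply a deformation/specialization argument as in the proof of Proposition \ref{same2}.

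I expect the main obstacle to be packaging the specialization "set extra variables to $1$" into a statement that visibly does not raise $\pd$, since setting a variable to $1$ is not a regular-element quotient and requires a small argument. The cleanest fix: realize $R/I'$ as a quotient of $R/I$ by a sequence of elements of the form $x_F - 1$ with $F \in \H \setminus \H'$; one checks that $x_F - 1$ is a nonzerodivisor on $R/I$ (because $x_F - 1$ is coprime to every monomial) and that the successive quotients are the ideals corresponding to the intermediate hypergraphs obtained by deleting faces one at a time. Quotienting a module by a regular element drops — never raises — the projective dimension by one (over the smaller ring, accounting for the Auslander–Buchsbaum shift), and iterating over all faces in $\H \setminus \H'$ yields $\pd(\H') = \pd(R'/I') \leq \pd(R/I) = \pd(\H)$. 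This mirrors the faithfully-flat and regular-sequence manipulations already used in the proof of Proposition \ref{same2}, so no new machinery is needed.
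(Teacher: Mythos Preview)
Your setup agrees with the paper's: introduce one variable $x_F$ per face of $\H$, set $m_k=\prod_{F\ni k}x_F$ and $n_k=\prod_{F\ni k,\,F\in\H'}x_F$, so that $n_k\mid m_k$ and $m_k/n_k$ is a product of the ``extra'' variables $x_F$ with $F\in\H\setminus\H'$. From this common starting point the two arguments diverge.

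The paper \emph{inverts} the extra variables by localizing at the prime $M'R$, where $M'$ is the homogeneous maximal ideal of the subring $R'=k[x_F:F\in\H']$. In $R_{M'R}$ every $x_F$ with $F\notin\H'$ is a unit, so $IR_{M'R}=I'R_{M'R}$; since $R'_{M'}\hookrightarrow R_{M'R}$ is faithfully flat and localization never raises projective dimension, the inequality drops out in one line.

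Your route instead \emph{specializes} the extra variables to $1$ by quotienting successively by the elements $x_F-1$. This also works: each $x_F-1$ avoids every associated (monomial) prime of the current monomial ideal, so it is a nonzerodivisor on $R/I$, and tensoring a free $R$-resolution of $R/I$ with $R/(x_F-1)$ gives a free resolution over the quotient ring; hence $\pd_{R/(x_F-1)}\bigl((R/I)/(x_F-1)\bigr)\le\pd_R(R/I)$. Iterating yields the claim. One correction: your sentence ``quotienting a module by a regular element drops the projective dimension by one (accounting for the Auslander--Buchsbaum shift)'' is garbled. The equality $\pd_{R/(y)}(M/yM)=\pd_R(M)$ holds when $y$ lies in the maximal ideal, but your $y=x_F-1$ is a unit modulo $\mathfrak m$, and in that case one only gets the inequality $\le$ (which is precisely what is needed, and is what the tensored-resolution argument actually proves). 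With that phrasing fixed, your argument is correct; it is arguably more elementary than the paper's, while the paper's localization trick is cleaner in that it avoids the iteration and the nonzerodivisor check.
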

\begin{proof}
Let $\H'=\{F_1,\ldots,F_s\}$ and $\H=\H'\cup \{G_1,\ldots,G_r\}$, where $G_j\notin \H'$ for any $j$.
Let $\mathcal B=\{a_1,\ldots,a_s,b_1,\ldots,b_r\}$ be a set of variables over $k$, and let $R=k[\mathcal B]$ and $R'=k[\mathcal A]$ where $\mathcal A=\{a_1,\ldots,a_s\}$.
For each $i=1,\ldots,\mu=\mu(\H)$, let $C_i=\{j\in \{1,\ldots,s\}\,|\, i\in G_j \}$ and $D_i=\{j\in \{1,\ldots,r\}\,|\, i\in G_j \}$, set $m_i=\prod_{j\in C_i}a_j \cdot \prod_{j\in D_i}b_j$ and $m_i'=\prod_{j\in C_i}a_j$. By construction, the ideal $I=(m_1,\ldots,m_{\mu})$ is associated to the hypergraph $\H$, and $I'=(m_1',\ldots,m_{\mu}')$ is associated to $\H'$, hence, by Corollary \ref{same} we have $\pd(\H')=\pd(R'/I')=\pd(R/I')$ and $\pd(\H)=\pd(R/I)$. Let $M'$ be the maximal homogeneous ideal of $R'=k[\mathcal A]$. The natural inclusion $R'\subseteq R$ induces the faithfully flat extension $R'_{M'}\subseteq R_{M'R}$ and, clearly, $I'R_{M'R}=IR_{M'R}$, then we have
$$\pd(\H')=\pd(R'/I')=\pd(R'_{M'}/I'_{M'})=\pd(R_{M'R}/IR_{M'R})\leq \pd(R/I)=\pd(\H),$$
where the second equality follows because $M'$ is the unique homogeneous maximal ideal of $R'$.
\end{proof}

We record the following special case of Lemma \ref{loc} for future use.
\begin{Corollary}\label{compare}
Let $\H$ be any hypergraph and assume $\{i\}\notin \H$ for some $i$. If one sets $\overline{\H}=\H\cup \{i\}$, then 
$\pd(\H)\leq \pd(\overline{\H})$.
\end{Corollary}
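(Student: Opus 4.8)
The plan is to obtain this as an immediate instance of Lemma \ref{loc}, applied to the nested pair $\H\subseteq\overline{\H}$. The only thing to check is that the hypotheses of Lemma \ref{loc} are met, namely that $\overline{\H}$ is again a (separated) hypergraph and that $\mu(\overline{\H})=\mu(\H)$.

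First I would verify that $\overline{\H}=\H\cup\{i\}$ is a hypergraph on the same vertex set as $\H$. Since $i$ is a vertex of $\H$ (the hypothesis $\{i\}\notin\H$ only asserts that the singleton is not a face, not that $i$ fails to be a vertex), we have $\bigcup_{F\in\overline{\H}}F=\left(\bigcup_{F\in\H}F\right)\cup\{i\}=V=[\mu]$, so $\overline{\H}$ has the same vertex set $V$, and in particular $\mu(\overline{\H})=\mu=\mu(\H)$. Separatedness of $\overline{\H}$ is inherited from $\H$: for every pair $1\le j_1<j_2\le\mu$, the faces $F_1,F_2\in\H$ witnessing the separation condition for $\H$ are still faces of $\overline{\H}$, so the same condition holds there.

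Next I would note that $\H\subseteq\overline{\H}$ by construction. Applying Lemma \ref{loc} with $\H':=\H$ and the ambient hypergraph $\overline{\H}$ then yields $\pd(\H)=\pd(\H')\le\pd(\overline{\H})$, which is exactly the claimed inequality. There is essentially no obstacle in this argument; the only point deserving a word of care is the observation that adjoining the singleton face $\{i\}$ does not enlarge the vertex set (which would otherwise violate the equal-$\mu$ hypothesis of Lemma \ref{loc}), and this is immediate as noted above.
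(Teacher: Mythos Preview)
Your proposal is correct and follows exactly the paper's approach: the paper simply records Corollary~\ref{compare} as a special case of Lemma~\ref{loc}, and you have spelled out precisely why the hypotheses of that lemma are satisfied. The additional checks you include (same vertex set, hence equal $\mu$, and inheritance of separatedness) are routine but accurate.
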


 To prove the next statement, we need to recall Taylor's resolution. The Taylor's resolution $\mathbb T.$ of a square-free monomial ideal $I$ minimally generated by monomials $m_1,\ldots,m_{\mu}$ is defined as follows. $T_1$ is a free $R$-module of rank $\mu$ with basis $e_1,\ldots,e_{\mu}$, for all $i$ we set $T_i = \bigwedge^iT_1$. Hence, the set $\{e_F\,|\,F=\{j_1<j_2<\ldots<j_i\}\}\subseteq [\mu]:=\{1,2,\ldots,\mu\}$, where $e_F = e_{j_1}\wedge e_{j_2}\wedge \cdots \wedge e_{j_i}$ form a basis of $T_i$. The differential 
$\delta_i:T_i \rightarrow T_{i-1}$ is defined as $\delta_i(e_F) =\sum_{k=1}^i(-1)^k\frac{{\rm lcm}(m_F)}{{\rm lcm}(m_{F\setminus \{j_k\}})}e_{F\setminus \{j_k\}}$, where ${\rm lcm}(m_F)={\rm lcm}(m_{j_1},\ldots,m_{j_i})$. The complex $\mathbb T.$ gives a (possibly non-minimal) graded free resolution of $R/I$.  We thank K. Kimura for suggesting us the following statement.
\begin{Proposition}\label{eq}
Let $\H',\H$ be hypergraphs with $\H=\H'\cup G$ where $G=\{i_1,\ldots,i_r\}$. If $\{i_j\}\in \H'$ for all $j$, then $\pd(\H')=\pd(\H)$.
\end{Proposition}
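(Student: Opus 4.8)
The plan is to show $\pd(\H) \leq \pd(\H')$ and combine it with the reverse inequality, which is immediate from Lemma \ref{loc} since $\H' \subseteq \H$. For the nontrivial direction, I would fix a square-free monomial ideal $I = I(\H) = (m_1,\ldots,m_\mu)$ associated to $\H$ in which every face (including each new face $G$) has its own unique variable, and let $I' = I(\H')$ be obtained by deleting the variable corresponding to $G$ from each $m_j$ with $j \in G$. The key observation is that, for each $j \in G = \{i_1,\ldots,i_r\}$, the singleton $\{i_j\}$ is already a face of $\H'$, so $m_{i_j}$ is (up to the $G$-variable) already divisible by a variable unique to $i_j$. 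I would use Taylor's resolution $\mathbb T.$ of $R/I$ and compare its differentials with those of the Taylor complex of $R/I'$: since the only change in passing from $I$ to $I'$ is removing one variable $a$ that divides exactly the generators indexed by $G$, the lcm's that change are precisely the $\mathrm{lcm}(m_F)$ with $F \cap G \neq \emptyset$, and for such $F$ the variable $a$ already cannot contribute a new factor beyond what the singleton-face variable of some $i_j \in F \cap G$ already forces.

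More carefully, the approach I expect to work is an induction on $r = |G|$, reducing to the case $r = 1$, say $G = \{i\}$ with $\{i\} \in \H'$. In that case the only difference between $I$ and $I'$ is that $m_i = a \cdot m_i'$ for a variable $a$ appearing in no other generator, where $m_i'$ still has the singleton variable of the face $\{i\}$ dividing it. I would then argue that the Taylor complexes of $R/I$ and $R/I'$ are \emph{isomorphic as complexes} after an appropriate monomial change of the differential entries: the ratio $\frac{\mathrm{lcm}(m_F)}{\mathrm{lcm}(m_{F\setminus\{j_k\}})}$ is unchanged whenever $i \notin F$, and when $i \in F$ the extra factor $a$ in $\mathrm{lcm}(m_F)$ cancels in every ratio except possibly the one removing $j_k = i$ — but that term's ratio is a monomial that, restricted to the $a$-exponent, is just $a$, and in the minimalization this term behaves identically whether or not the factor $a$ is present, because the singleton-variable of $\{i\}$ guarantees the corresponding entry is not a unit either way. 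Consequently the minimal free resolutions of $R/I$ and $R/I'$ have the same length, giving $\pd(\H) = \pd(\H')$.

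Alternatively — and this may be cleaner — I would set up a short exact sequence in the spirit of Lemma \ref{ref}. Since $\{i\} \in \H$, Lemma \ref{ref} (after relabeling so that $i = 1$) gives $\pd(\H) = \max\{\pd(\H_1), \pd(\Q_1) + 1\}$, and the analogous statement holds for $\H'$ with the \emph{same} $\H_1$ (deleting vertex $1$ removes the face $\{1\}$ and the $G$-face alike, as $G = \{1\}$) and with $\Q_1' = \H_1 : v_1$. The point is then to check that $\Q_1 = \H_1 : v_1 = \Q_1'$, i.e. that the colon hypergraph is unaffected by whether vertex $1$ carries the extra private variable $a$ — which is clear since $a$ divides no $m_j$ for $j > 1$, so $J_1 = I_1 : m_1 = I_1 : m_1' = J_1'$. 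This yields $\pd(\H) = \pd(\H')$ directly.

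The main obstacle will be the bookkeeping in the $r \geq 2$ case if one goes the Taylor-resolution route: one must ensure that removing several private variables $a_{i_1},\ldots,a_{i_r}$ simultaneously does not interact badly across the differentials. The induction on $r$ sidesteps this, but then one must verify at each step that the hypothesis "$\{i_j\} \in \H'$ for all $j$" is preserved — that deleting one private variable from a hypergraph whose remaining new vertices are still closed leaves those vertices closed. This is true because closedness of $\{i_j\}$ depends only on the singleton face, which is untouched by deleting the $G$-variable from the other generators. Given that, the Lemma \ref{ref} argument applied repeatedly finishes the proof cleanly.
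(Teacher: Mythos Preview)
Your opening Taylor-resolution sketch is essentially the paper's proof. The paper sets up $I$ and $I'$ exactly as you describe (one new variable $b$ dividing precisely the generators indexed by $G$), invokes Lemma~\ref{loc} for $\pd(\H')\le\pd(\H)$, and for the other direction picks $\overline F$ with $\delta_p(e_{\overline F})\in M T_{p-1}$ and verifies, by a case split on $|\overline F\cap G|\in\{0,1,\ge 2\}$, that the corresponding primed ratios also lie in $M'$; the case $|\overline F\cap G|=1$ is exactly where the closedness of each $i_j$ is used, just as you anticipate.

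Your alternative induction on $r$, however, has a genuine gap. The base case $r=1$ is \emph{vacuous}: if $G=\{i_1\}$ with $\{i_1\}\in\H'$, then $\H=\H'\cup\{G\}=\H'$ as hypergraphs and there is nothing to prove (the paper notes this and immediately reduces to $r\ge2$). Your paragraph-3 Lemma~\ref{ref} argument explicitly takes $G=\{1\}$, so the equalities $\H_1=(\H')_1$ and $I_1=I_1'$ you rely on are trivially true there --- but both are \emph{false} for $r\ge2$, since removing vertex $1$ from $\H$ leaves the face $G\setminus\{1\}$ behind and the generators $m_j$ with $j\in G\setminus\{1\}$ still carry $b$. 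Thus the entire content of the proposition lies in an inductive step you never formulate. Such a step \emph{can} be made to work via Lemma~\ref{ref}: one checks directly that $J_1=I_1:m_1$ coincides with $J_1'=I_1':m_1'$ (for every $j\ge 2$ the factor $b$ cancels in $m_j/\gcd(m_j,m_1)$, whether or not $j\in G$), and then applies the inductive hypothesis with the smaller face $G\setminus\{1\}$ to the pair $\H_1=(\H')_1\cup(G\setminus\{1\})$ versus $(\H')_1$. But that is precisely the verification you have skipped, and your phrase ``deleting one private variable'' does not describe it.
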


\begin{proof}
If $G\in \H'$, then $\H'=\H$ and the statement is trivial. We may then assume $G\notin \H'$, which, by assumption, implies $r\geq 2$.
Let $I'=(m_1',\ldots,m_{\mu}')\subseteq R'$ be an ideal associated to $\H'$, let $b$ be a new variable, and let $I=(m_1,\ldots,m_{\mu})\subseteq R=R'[b]$, where for each $l$ we define
$$m_l=\left\{ \begin{array}{ll}
m_l', &\mbox{ if }l\notin G\\
m_l'b, & \mbox{ if }l\in G
\end{array}\right.$$
 By construction, $I$ is associated to $\H$. Then, by Corollary \ref{same}, we have $\pd(\H)=\pd(R/I)$ and $\pd(\H')=\pd(R'/I')$.
Set $p=\pd(\H)$, by Lemma \ref{loc}, we only need to show $p\leq \pd(\H')$.  Let $M'$ be the unique homogeneous maximal ideal of $R'$ and let $M=M'R+bR$ be the homogeneous maximal ideal of $R$ and let $\mathbb T.$ and $\mathbb T.'$ be the Taylor's complexes of $I$ and $I'$ respectively.

 By definition of $p$, there exists a subset $\overline{F}\subseteq [\mu]$ such that $\delta_p(e_{\overline{F}})\in MT_{p-1}$ or, equivalently, $\frac{{\rm lcm}(m_{\overline{F}})}{{\rm lcm}(m_{{\overline{F}}\setminus \{j_k\}})}\in M$ for every $k$. To prove $\pd(\H')\geq p$ it suffices to show that $\delta_p'(e_{\overline{F}}')\in M'T_{p-1}'$.

If either $G\cap \overline{F}=\emptyset$ or $|G\cap \overline{F}|\geq 2$, then, by definition of the $m_l$, the variable $b$ does not divide the monomial $\frac{{\rm lcm}(m_{\overline{F}}')}{{\rm lcm}(m_{{\overline{F}}\setminus \{j_k\}}')}=\frac{{\rm lcm}(m_{\overline{F}})}{{\rm lcm}(m_{{\overline{F}}\setminus \{j_k\}})}$ of $M$, for every $k$. This proves that 
$\frac{{\rm lcm}(m_{\overline{F}}')}{{\rm lcm}(m_{{\overline{F}}\setminus \{j_k\}}')}\in M'$, whence
$$\delta_p'(e_{\overline{F}}') =\sum_{k=1}^p(-1)^k\frac{{\rm lcm}(m_{\overline{F}}')}{{\rm lcm}(m_{{\overline{F}}\setminus \{j_k\}}')}e_{{\overline{F}}\setminus \{j_k\}}'\in M'T_{p-1}'.$$

Hence, we may assume $\overline{F}\cap G=\{i_h\}$ for some $1\leq h \leq r$. Write $\overline{F}=\{i_h,j_1,\ldots,j_{p-1}\}$, where the $j_k\notin G$, then, for any $k$, we have
$\frac{{\rm lcm}(m_{\overline{F}}')}{{\rm lcm}(m_{\overline{F}\setminus \{j_k\}}')}=\frac{b\cdot {\rm lcm}(m_{\overline{F}})}{b\cdot {\rm lcm}(m_{\overline{F}\setminus \{j_k\}})}=\frac{{\rm lcm}(m_{\overline{F}})}{{\rm lcm}(m_{\overline{F}\setminus \{j_k\}})}\in M'$. 

Since $\delta_p'(e_{\overline{F}}') =\sum_{k=1}^{p-1}(-1)^k\frac{{\rm lcm}(m_{\overline{F}}')}{{\rm lcm}(m_{\overline{F}\setminus \{j_k\}}')}e_{\overline{F}\setminus \{j_k\}}' + (-1)^p\frac{{\rm lcm}(m_{\overline{F}}')}{{\rm lcm}(m_{\overline{F}\setminus \{i_h\}}')}e_{\overline{F}\setminus \{i_h\}}'$, it only remains to show that $\frac{{\rm lcm}(m_{\overline{F}}')}{{\rm lcm}(m_{\overline{F}\setminus \{i_h\}}')}\in M'$. 
By assumption, $i_h$ is a closed vertex of $\H'$, hence there exists a variable $a_{i_h}$ of $R'$ such that $a_{i_h}$ divides only $m_{i_h}'$ and does not divide any other minimal generator of $I'$. Then, $\frac{{\rm lcm}(m_{\overline{F}}')}{{\rm lcm}(m_{\overline{F}\setminus \{i_h\}}')}\in a_{i_h}R'\subseteq M'$, which concludes the proof.
\end{proof}

The next lemma allows us to control the projective dimension when we remove from $\H$ a closed vertex whose neighbors are closed. We thank K. Kimura for suggesting us the following proof (which allowed us to remove an addition assumption on $\H$ in our original statement).

\begin{Lemma}\label{red}
Let $\H$ be a hypergraph. If $\{1\}\in \H$ and all its neighbors are closed vertices, then $\pd(\H)=\pd(\H_1)+1$.
\end{Lemma}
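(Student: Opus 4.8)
The plan is to combine Lemma~\ref{ref} with Proposition~\ref{eq}. By Lemma~\ref{ref}, since $\{1\}\in\H$ we have $\pd(\H)=\max\{\pd(\H_1),\pd(\Q_1)+1\}$, where $\Q_1=\H_1:v_1$ is the hypergraph associated to $J_1=I_1:m_1$. So the statement $\pd(\H)=\pd(\H_1)+1$ will follow once I show two things: first, that $\pd(\Q_1)\geq\pd(\H_1)$ (so that the maximum is achieved by the second term, and hence $\pd(\H)=\pd(\Q_1)+1$); and second, that $\pd(\Q_1)=\pd(\H_1)$ (so that $\pd(\Q_1)+1=\pd(\H_1)+1$). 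In fact both reduce to identifying the relationship between $\Q_1$ and $\H_1$.

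First I would analyze the faces of $\Q_1$. Write $\mathcal A$ so that $a$ is the unique variable with $a\mid m_1$ and $a\nmid m_i$ for $i>1$ (using the $I(\H)$ normalization from Notation~\ref{not}). The key observation is that the colon $J_1=I_1:m_1$ is generated by $m_i/\gcd(m_i,m_1)$ for $i=2,\dots,\mu$ (after removing redundant generators). Now $\gcd(m_i,m_1)$ is nontrivial exactly when $i$ is a neighbor of $1$ in $\H$; by hypothesis every such neighbor $i$ is a closed vertex of $\H$, and more precisely every variable $b\mid m_1$ with $b\neq a$ appears in $m_i$ for exactly those $i$ lying in the face $F_b\ni 1$. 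Dividing through by $\gcd(m_i,m_1)$ removes from each such $m_i$ precisely the variables of $m_1$; the effect on the hypergraph is to delete vertex $1$ and delete from $\H_1$ exactly those faces of $\H$ that contained $1$. Since every neighbor of $1$ is closed, each vertex that loses a face this way still retains its singleton face, so no new merging of vertices occurs and $\Q_1$ is still separated with the same vertex set $\{2,\dots,\mu\}$ as $\H_1$. Thus $\Q_1\subseteq\H_1$ with $\mu(\Q_1)=\mu(\H_1)$, giving $\pd(\Q_1)\leq\pd(\H_1)$ by Lemma~\ref{loc} — the reverse of what I want, so I need more.

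The point that rescues this is Proposition~\ref{eq}: $\H_1$ is obtained from $\Q_1$ by adjoining the faces of $\H$ that contained vertex $1$ (restricted to $\{2,\dots,\mu\}$), and each such face, by the closedness hypothesis on the neighbors of $1$, consists entirely of vertices that are already closed in $\Q_1$. Hence by Proposition~\ref{eq} (applied once for each adjoined face, or with a mild extension of its statement to adjoining several faces in turn) we get $\pd(\Q_1)=\pd(\H_1)$. Plugging into Lemma~\ref{ref}: $\pd(\H)=\max\{\pd(\H_1),\pd(\H_1)+1\}=\pd(\H_1)+1$, as desired.

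The main obstacle I anticipate is the bookkeeping in the second paragraph: verifying carefully that dividing the generators $m_i$ by $\gcd(m_i,m_1)$ really does correspond to deleting vertex $1$ and the faces through it, \emph{without} accidentally identifying two previously distinct vertices or deleting a singleton face (which is where the hypothesis "all neighbors of $1$ are closed" is essential — a neighbor $i$ whose only face shared with others was $\{1,i\}$-type could otherwise become non-separated). Once the combinatorial description of $\Q_1$ versus $\H_1$ is pinned down, the rest is just invoking Lemma~\ref{ref} and Proposition~\ref{eq}.
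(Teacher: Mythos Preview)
Your proposal is correct and follows essentially the same route as the paper: invoke Lemma~\ref{ref} to reduce to proving $\pd(\Q_1)=\pd(\H_1)$, observe that $\Q_1\subseteq\H_1$ with the missing faces consisting entirely of (closed) neighbors of $1$, and then apply Proposition~\ref{eq} iteratively. Your detour through Lemma~\ref{loc} is unnecessary (Proposition~\ref{eq} already gives equality, not just an inequality), but the core argument is identical to the paper's.
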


\begin{proof}
By Lemma \ref{ref} we have
$\pd(\H)={\rm max}\{\pd(\H_1),\pd(\Q_1)+1\}$, 
hence it suffices to show that $\pd(\Q_1)=\pd(\H_1)$. Since $\Q_1\subseteq \H_1$ and all the neighbors of $1$ are closed, then the vertices of the faces of $\H_1$ that are not in $\Q_1$ are all closed. Thus, by iterated use of Proposition \ref{eq}, one has $\pd(\Q_1)=\pd(\H_1)$. 
\end{proof}

The next example, shown in Figure~\ref{F1}, illustrates the use of Lemma \ref{red}.
\begin{Example}\label{1}
Let $\mathcal{H}=\{\{1\},\{1,2\},\{2\},\{2,3\},\{3,4\},\{4,5\},\{5\},\{5,6\},\{6,7\},\{7\}\}$  and $\mathcal{H}_{1}=\{\{2\},\{2,3\},\{3,4\},\{4,5\},\{5\},\{5,6\},\{6,7\},\{7\}\}$, then $\pd(\H)=\pd(\H_1)+1$.
\begin{figure}[h] 
\begin{tikzpicture}[thick, scale=0.7]

\shade [shading=ball, ball color=black] (0,0) circle (.1) node [left] {$\mathcal{H}:$  } ; 
\shade [shading=ball, ball color=black] (1,0) circle (.1) ;
\draw  [shape=circle] (2,0) circle (.1);
\draw  [shape=circle] (3,0) circle (.1) ;
\shade [shading=ball, ball color=black]  (4,0) circle (.1) ;
\draw  [shape=circle] (5,0) circle (.1) ;
\shade [shading=ball, ball color=black] (6,0) circle (.1);
\draw [line width=1.2pt  ] (0,0)--(0.9,0);
\draw [line width=1.2pt  ] (1.1,0)--(1.9,0)  ;
\draw [line width=1.2pt  ] (2.1,0)--(2.9,0)  ;
\draw [line width=1.2pt  ] (3.1,0)--(3.9,0)  ;
\draw [line width=1.2pt  ] (4.1,0)--(4.9,0)  ;
\draw [line width=1.2pt  ] (5.1,0)--(5.9,0)  ;

\shade [shading=ball, ball color=black] (1,-1) circle (.1) ;
\draw  [shape=circle] (2,-1) circle (.1) ;
\draw  [shape=circle] (3,-1) circle (.1);
\shade [shading=ball, ball color=black]  (4,-1) circle (.1) ;
\draw  [shape=circle] (5,-1) circle (.1) ;
\shade [shading=ball, ball color=black] (6,-1) circle (.1) ;
\path (0,-1)--(0,-1) node [left] {$\mathcal{H}_{1}:$  } ; 
\draw [line width=1.2pt  ] (1,-1)--(1.9,-1);
\draw [line width=1.2pt  ] (2.1,-1)--(2.9,-1)  ;
\draw [line width=1.2pt  ] (3.1,-1)--(3.9,-1)  ;
\draw [line width=1.2pt  ] (4.1,-1)--(4.9,-1)  ;
\draw [line width=1.2pt  ] (5.1,-1)--(5.9,-1)  ;

\end{tikzpicture}

\caption{}\label{F1}
\end{figure}
\end{Example}

We now introduce the first class of hypergraphs studied in this paper.
\begin{Definition}\label{str}
A hypergraph $\H$ with $V=[\mu]$ is a {\em string} if $\{i,i+1\}\in \H$ for all $i=1,\ldots,\mu-1$ and 
the only faces containing the vertex $i$ are  $\{i-1,i\},\{i,i+1\}$ and, possibly, $\{i\}$.

The vertices $1$ and $\mu$ are called the {\em endpoints} of the string $\H$.
\end{Definition}
From the definition of separated hypergraph it follows that the endpoints of a string hypergraph are closed vertices, i.e. $\{1\}\in \H$ and $\{\mu\}\in \H$. A hypergraph $\H$ is called a {\em string of opens} if $\H$ is a string hypergraph with at least three vertices such that the only closed vertices of $\H$ are its endpoints. 
For instance, the two hypergraphs $\H'=\{\{1\},\{1,2\},\{2,3\},\{3\}\},$ and $\H''=\{\{1\}, \{1,2\}, \{2,3\},$ $\{3,4\}, \{4,5\}, \{5,6\}, \{6,7\}, \{7\} \} $ are strings of opens, whereas the hypergraphs $\H$ and $\H'$ of Example \ref{1} are string hypergraphs that are not string of opens (because they contain four and three closed vertices, respectively). Note that there is a bijective correspondence between strings of opens and string edge ideals.

For the reader's convenience, the vertex $1$ of a string will always denote one of the endpoints of $\H$. 
Similarly, if $\mu\geq 2$, then the vertex $2$ of $\H$ is the only neighbor of $1$, if $\mu\geq 3$, then the vertex $3$ is the other neighbor of $2$, etc. If $\H$ is a string with $\mu\geq 2$ vertices, then, following Notation \ref{not}, $\H_1$ is the string obtained by ''removing'' the endpoint $1$ from $\H$; the vertex $2$ is then an endpoint of $\H_1$.

\begin{Lemma}\label{red2}
Let $\H$ be any string hypergraph. Then, one has the inequalities
\begin{itemize}
\item[(i)]  $\pd(\H_1)\leq \pd(\H)\leq \pd(\H_1)+1$, and
\item[(ii)] $\pd(\H)\leq \pd(\H_2)+2$. 
\end{itemize}
\end{Lemma}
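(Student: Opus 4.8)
The plan is to handle part (i) first, then deduce part (ii) with a small case analysis on whether the vertex $2$ is closed or open in $\H$. For part (i), the key observation is that since $\H$ is a string, the endpoint $1$ is a closed vertex, so Lemma \ref{ref} applies: $\pd(\H)=\max\{\pd(\H_1),\pd(\Q_1)+1\}$, where $\Q_1=\H_1:v_1$. The upper bound $\pd(\H)\leq\pd(\H_1)+1$ will follow if I can show $\pd(\Q_1)\leq\pd(\H_1)$, and since $\Q_1\subseteq\H_1$ it suffices, by Lemma \ref{loc}, to check that $\Q_1$ has the same number of vertices as $\H_1$ — equivalently that $J_1=I_1:m_1$ is still minimally generated by $\mu-1$ elements, i.e. no generator of $I_1$ is swallowed after colon. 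Here I would use that the only neighbor of $1$ in $\H$ is $2$, so $m_1$ and $m_2$ share exactly the variable corresponding to the face $\{1,2\}$, while $m_1$ and $m_j$ for $j\geq 3$ are coprime; hence $J_1$ is generated by $m_2/\gcd(m_1,m_2)=m_2/(\text{the }\{1,2\}\text{-variable})$ together with $m_3,\dots,m_\mu$, all of which are nonzero and pairwise non-dividing, so $\mu(\Q_1)=\mu-1=\mu(\H_1)$. The lower bound $\pd(\H_1)\leq\pd(\H)$ is immediate from the same formula in Lemma \ref{ref}. This proves (i).

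For part (ii), I apply (i) twice: $\pd(\H)\leq\pd(\H_1)+1$ and $\pd(\H_1)\leq\pd((\H_1)_1)+1=\pd(\H_2)+1$, since removing the endpoint $2$ from the string $\H_1$ yields $\H_2$. Chaining these gives $\pd(\H)\leq\pd(\H_2)+2$ directly, with no case analysis needed. (One should note $\H_1$ is again a string with endpoint $2$, as remarked in the text just before the lemma, so (i) is legitimately applicable to it; if $\mu\leq 2$ the statement (ii) is vacuous or trivial since then $\H_2$ is empty and $\pd(\H)\leq 2=\mu$ by Proposition \ref{saturated}.)

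The only real subtlety — and the step I expect to need the most care — is verifying that $\mu(\Q_1)=\mu(\H_1)$, i.e. that passing from $I_1$ to $J_1=I_1:m_1$ does not merge or kill any generator. This is where the string structure is genuinely used: because $1$'s unique neighbor is $2$, the colon only "strips" the single shared variable off $m_2$ and leaves $m_3,\dots,m_\mu$ untouched, and one must check the resulting list $m_2', m_3,\dots,m_\mu$ is still a minimal generating set (no containment relations are created). This follows since $m_2'$ differs from $m_2$ only in the $\{1,2\}$-variable, which divides none of $m_3,\dots,m_\mu$ by the string hypothesis, so $m_2'$ cannot divide nor be divided by any $m_j$ with $j\geq 3$ that $m_2$ did not already. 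Everything else in the argument is a routine invocation of Lemmas \ref{ref} and \ref{loc}.
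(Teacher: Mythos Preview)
Your argument for the lower bound in (i) and for (ii) is fine and matches the paper. But your proof of the upper bound $\pd(\H)\leq\pd(\H_1)+1$ has a genuine gap: the claim that $\mu(\Q_1)=\mu(\H_1)$ is \emph{false} when the vertex $2$ is open. Working with the ideal $I(\H)$ of Notation~\ref{not}, if $\{2\}\notin\H$ then $m_2=a_{\{1,2\}}\cdot a_{\{2,3\}}$, so $m_2'=m_2/a_{\{1,2\}}=a_{\{2,3\}}$, and this \emph{divides} $m_3$. Hence $m_3$ becomes redundant in $J_1$, and $\Q_1$ has only $\mu-2$ vertices---in fact $\Q_1=\H_3\cup\{v\}$, exactly as used later in the proof of Proposition~\ref{red3}. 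Your justification (``$m_2'$ differs from $m_2$ only in the $\{1,2\}$-variable, which divides none of $m_3,\dots,m_\mu$, so $m_2'$ cannot divide\ldots any $m_j$'') is backwards: stripping a variable from $m_2$ makes it \emph{smaller}, hence \emph{more} likely to divide another generator. The obstruction to $m_2\mid m_3$ was precisely the variable $a_{\{1,2\}}$, and you have just removed it.

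The paper circumvents this by a different device: when $\{2\}\notin\H$ it passes to $\overline{\H}=\H\cup\{2\}$, uses Corollary~\ref{compare} to obtain $\pd(\H)\leq\pd(\overline{\H})$, and then applies Lemma~\ref{red} to $\overline{\H}$ (where the neighbor $2$ of the endpoint $1$ is now closed) to get $\pd(\overline{\H})=\pd(\overline{\H}_1)+1=\pd(\H_1)+1$. Your Lemma~\ref{loc} approach does work cleanly in the case $\{2\}\in\H$ (then $m_2'$ still contains $a_{\{2\}}$ and cannot divide any $m_j$), but in that case one already has the stronger equality $\pd(\H)=\pd(\H_1)+1$ from Lemma~\ref{red} directly.
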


\begin{proof}
(i) If $\{2\}\in \H$, 
then, by Lemma \ref{red}, we have $\pd(\H)=\pd(\H_1)+1$ and the statement follows trivially. We may then assume $\{2\}\notin \H$. Now, the inequality $\pd(\H_1)\leq \pd(\H)$ follows from Lemma \ref{ref}, so we only need to show $\pd(\H)\leq \pd(\H_1)+1$.  Let $\overline{\H}=\H\cup \{2\}$ and note that, since $2$ is a closed vertex of $\overline{\H}$, by Lemma \ref{ref} we have $\pd(\overline{\H})=\pd(\overline{\H}_1)+1$. Since $\overline{\H}_1=\H_1$, we have $\pd(\overline{\H})=\pd(\H_1)+1$. Also, by Corollary \ref{compare}, we have $\pd(\H)\leq \pd(\overline{\H})$. Hence we obtain
$$\pd(\H)\leq \pd(\overline{\H})=\pd(\H_1)+1.$$
Assertion (ii) follows by applying assertion (i) twice: $\pd(\H)\leq \pd(\H_1)+1\leq \pd(\H_2)+2$. 
\end{proof}

We now prove that, if the neighbor of an endpoint is open, ``removing'' the last 3 vertices from that end of the string makes the projective dimension drop exactly by 2 units.
\begin{Proposition}\label{red3}
Let $\H$ be a string with $\mu\geq 3$ vertices. If $\{2\}\notin \H$, then $\pd(\H)=\pd(\H_3)+2$.
\end{Proposition}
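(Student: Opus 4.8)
The plan is to leverage Lemma~\ref{red2}(ii), which already gives the upper bound $\pd(\H)\le \pd(\H_3)+2$, so the entire task reduces to proving the reverse inequality $\pd(\H)\ge \pd(\H_3)+2$. The natural tool is Lemma~\ref{ref}, but we cannot apply it directly to $\H$ at the vertex $1$ in a useful way because the neighbor $2$ is open; instead we peel one vertex at a time and keep track of the two auxiliary hypergraphs $\H_1$, $\Q_1=\H_1\mathbin{:}v_1$, etc.

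First I would set up the recursion. Since $\{1\}\in\H$ (endpoints of strings are closed), Lemma~\ref{ref} gives $\pd(\H)=\max\{\pd(\H_1),\pd(\Q_1)+1\}$. Now $\H_1$ is a string with endpoint $2$, and by hypothesis $\{2\}\notin\H$, so $2$ is open in $\H$; but crucially $\{2\}\in\H_1$ since $2$ becomes an endpoint of the string $\H_1$. The key computation is to identify $\Q_1=\H_1\mathbin{:}v_1$ explicitly: since $m_1$ involves only the variable(s) for the faces $\{1\}$ and $\{1,2\}$, colon-ing $I_1=(m_2,\dots,m_\mu)$ by $m_1$ has the effect of deleting from each $m_j$ ($j\ge 2$) the variable corresponding to the face $\{1,2\}$ — that is, it removes the face $\{1,2\}$ (equivalently $\{2\}$ loses that neighbor), so $\Q_1$ is the string on vertices $\{2,\dots,\mu\}$ with the edge $\{2\}$ now necessarily closed; in fact one checks $\Q_1 = \H_2\cup\{2\}$, i.e.\ $\H_2$ with vertex $2$ made into a closed vertex (its endpoint $3$ is unchanged). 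Since $\H$ has $\mu\ge 3$ vertices, $\H_2$ is a nonempty string with endpoints $3$ (closed) and $\mu$; adjoining $\{2\}$ and the edge $\{2,3\}$ — wait, the edge $\{2,3\}$ is already in $\H_2$ as an edge at the endpoint — so $\Q_1=\H_2$ if $\{3\}$ was already... I must be careful here: the cleanest statement is $\Q_1 = \H_2 \cup \{\{2\},\{2,3\}\}$, and since $\{2,3\}\in\H_2$ already and $\{3\}\in\H_2$ (endpoint), $\Q_1$ is exactly the string got by prepending a closed vertex $2$ to $\H_2$. Then $\{2\}\in\Q_1$ and all neighbors of $2$ in $\Q_1$... are just $3$, which is closed, so Lemma~\ref{red} applies: $\pd(\Q_1)=\pd((\Q_1)_2)+1=\pd(\H_2)+1$.

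Putting this together: $\pd(\H)=\max\{\pd(\H_1),\ \pd(\H_2)+2\}$. Now I apply the same analysis to $\H_1$: its endpoint is $2$ and — here is where the hypothesis $\{2\}\notin\H$ is used — in $\H$ we have $\{2\}\notin\H$, hence the neighbor $3$ of $1$'s successor... I need $\{3\}\notin\H$? No. Let me instead just bound $\pd(\H_1)$. Either $\{3\}\in\H_1$ (equivalently $\{3\}\in\H$) or not. If $\{3\}\notin\H_1$ then I can recurse: $\pd(\H_1)=\max\{\pd(\H_2),\pd(\H_3)+2\}\le \pd(\H_3)+2$ by Lemma~\ref{red2}(i)--(ii) applied to $\H_1$. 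If $\{3\}\in\H_1$, then $\H_1$ has a closed vertex at distance one from its endpoint $2$; one shows $\pd(\H_1)\le\pd((\H_1)_1)+1=\pd(\H_2)+1\le\pd(\H_3)+2$, again using Lemma~\ref{red2}(i). In every case $\pd(\H_1)\le\pd(\H_3)+2$, while the other term is $\pd(\H_2)+2\le\pd(\H_3)+3$... that's too weak. I should instead observe that the dominating term must be handled more carefully: from $\pd(\H)=\max\{\pd(\H_1),\pd(\H_2)+2\}$ and Lemma~\ref{red2}(i) $\pd(\H_2)\le\pd(\H_3)+1$, I actually want equality $\pd(\H_2)=\pd(\H_3)+1$. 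But $\{3\}$ may be open or closed in $\H_2$; if $\{4\}\notin\H$ the recursion continues, and if some $\{k\}\in\H$ we hit Lemma~\ref{red}. The honest structure of the argument is: walk in from the endpoint until you reach the first closed vertex (or the far endpoint $\mu$), and at that point Lemma~\ref{red} or Proposition~\ref{saturated}-type bookkeeping forces the drop to be exactly $2$ over three steps. The main obstacle I anticipate is precisely this case analysis on the location of the first closed vertex among $\{2\},\{3\},\{4\}$ and making sure the inequality $\pd(\H)\ge\pd(\H_3)+2$ is tight in each branch — in particular verifying the identification $\Q_1=\H_2\cup\{\{2\},\{2,3\}\}$ and the analogous colon computation one level deeper, and checking that the $\max$ in Lemma~\ref{ref} is always achieved by the $\pd(\Q_1)+1$ term (so that no information is lost), which is where the assumption $\{2\}\notin\H$ genuinely enters.
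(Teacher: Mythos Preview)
Your identification of $\Q_1$ is wrong, and this is the crux of the matter. You write that $\Q_1$ is ``the string got by prepending a closed vertex $2$ to $\H_2$,'' but this is not what happens. Since $\{2\}\notin\H$, the monomial $m_2$ has support exactly $\{a_{\{1,2\}},a_{\{2,3\}}\}$, so after colon-ing by $m_1$ we get $m_2'=a_{\{2,3\}}$. But $a_{\{2,3\}}$ divides $m_3$, so $m_3$ is \emph{redundant} in $J_1$: the minimal generating set of $J_1$ is $(a_{\{2,3\}},m_4,\dots,m_\mu)$, not $(a_{\{2,3\}},m_3,\dots,m_\mu)$. Since $a_{\{2,3\}}$ shares no variable with any of $m_4,\dots,m_\mu$, the hypergraph $\Q_1$ is the \emph{disjoint union} of $\H_3$ and an isolated closed vertex. (Equivalently: your purported $\Q_1$ is not separated, because no face contains vertex $2$ and excludes vertex $3$.)

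With the correct identification the proof is one line, exactly as in the paper: $\pd(\Q_1)=\pd(\H_3)+1$, hence Lemma~\ref{ref} gives $\pd(\H)=\max\{\pd(\H_1),\pd(\H_3)+2\}$, and Lemma~\ref{red2}(ii) says $\pd(\H_1)\le\pd(\H_3)+2$, so the max is $\pd(\H_3)+2$. Your formula $\pd(\H)=\max\{\pd(\H_1),\pd(\H_2)+2\}$ is off by one level, which is precisely why you found yourself writing ``that's too weak'' and reaching for an unfinished case analysis on the first closed vertex. None of that is needed once $\Q_1$ is computed correctly.
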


\begin{proof}
Recall that the vertex $2$ is the neighbor of the endpoint $1$ of $H$, and $3$ is the other neighbor of $2$. 
By Lemma \ref{ref} we have $\pd(\H)={\rm max}\{\pd(\H_1),\pd(\Q_1)+1\}$. Moreover, since $\{2\}\notin \H$, it is easily seen that $\Q_1=\H_3\cup\{v\}$ is the disjoint union of $\H_3$ and a vertex $v$, hence $\pd(\Q_1)=\pd(\H_3)+1$. Then we have
$$\pd(\H)={\rm max}\{\pd(\H_1),\pd(\H_3)+2\}=\pd(\H_3)+2,$$
where the last equality follows because $\pd(\H_1)\leq \pd(\H_3)+2$ by Lemma \ref{red2}.(ii).
\end{proof}

Combining together Lemma \ref{red} and Proposition \ref{red3} we obtain a recursive way of computing projective dimensions, see also Algorithm \ref{alg}.
\begin{Remark}\label{proc}
Let $\H$ be a string hypergraph. If the vertex $2$ is closed, then $\pd(\H)=\pd(\H_1)+1$; if the vertex $2$ is open, then $\pd(\H)=\pd(\H_3)+2$. In either way, to compute $\pd(\H)$ one has to compute the projective dimension of a string with a strictly smaller number of vertices. After a finite number of iterations of this procedure, one obtains the projective dimension of $\H$.
\end{Remark}

Example \ref{ex1}, which is shown in Figure ~\ref{F2}, illustrates the procedure.
\begin{Example}\label{ex1}
Let $\mathcal{H}$  be the hypergraph of Example \ref{1}. Then $\pd(\mathcal{H})=5$.
\end{Example}

\begin{proof}
By Lemma \ref{red} we have $\pd(\H)=\pd(\H_1)+1$, and 
by Proposition \ref{red3}, $\pd(\mathcal{H}_{1})=\pd(\mathcal{H}_{4})+2$. 
By Proposition \ref{red3}, we have $\pd(\mathcal{H}_{4})=\pd(\H_7)+2=0+2=2$ (because $\H_7$ is the empty hypergraph), then $\pd(\mathcal{H})=1+2+2=5$.
\begin{figure}[h] \label{F2}
\begin{tikzpicture}[thick, scale=0.7]

\shade [shading=ball, ball color=black] (0,0) circle (.1) node [left] {$\mathcal{H}:$  } ; 
\shade [shading=ball, ball color=black] (1,0) circle (.1) ;
\draw  [shape=circle] (2,0) circle (.1);
\draw  [shape=circle](3,0) circle (.1) ;
\shade [shading=ball, ball color=black] (4,0) circle (.1) ;
\draw  [shape=circle] (5,0) circle (.1) ;
\shade [shading=ball, ball color=black] (6,0) circle (.1);
\draw [line width=1.2pt  ] (0,0)--(0.9,0);
\draw [line width=1.2pt  ] (1.1,0)--(1.9,0)  ;
\draw [line width=1.2pt  ] (2.1,0)--(2.9,0)  ;
\draw [line width=1.2pt  ] (3.1,0)--(3.9,0)  ;
\draw [line width=1.2pt  ] (4.1,0)--(4.9,0)  ;
\draw [line width=1.2pt  ] (5.1,0)--(5.9,0)  ;

\shade [shading=ball, ball color=black] (1,-1) circle (.1) ;
\draw  [shape=circle] (2,-1) circle (.1) ;
\draw  [shape=circle] (3,-1) circle (.1);
\shade [shading=ball, ball color=black] (4,-1) circle (.1) ;
\draw  [shape=circle] (5,-1) circle (.1) ;
\shade [shading=ball, ball color=black] (6,-1) circle (.1) ;
\path (0,-1)--(0,-1) node [left] {$\mathcal{H}_{1}:$  } ; 
\draw [line width=1.2pt  ] (1.1,-1)--(1.9,-1)  ;
\draw [line width=1.2pt  ] (2.1,-1)--(2.9,-1)  ;
\draw [line width=1.2pt  ] (3.1,-1)--(3.9,-1)  ;
\draw [line width=1.2pt  ] (4.1,-1)--(4.9,-1)  ;
\draw [line width=1.2pt  ] (5.1,-1)--(5.9,-1);

\shade [shading=ball, ball color=black] (4,-2) circle (.1);
\draw  [shape=circle] (5,-2) circle (.1) ;
\shade [shading=ball, ball color=black] (6,-2) circle (.1);
\draw [line width=1.2pt  ] (4.1,-2)--(4.9,-2);
\draw [line width=1.2pt  ] (5.1,-2)--(5.9,-2)  ;
\path (0,-2)--(0,-2) node [left] {$\mathcal{H}_{4}:$  } ; 
 
\end{tikzpicture}
\caption{}
\end{figure}
\end{proof}

\section{Projective Dimension of Strings}

Lemma \ref{red} and Proposition \ref{red3} provide a simple recursive procedure to compute the projective dimension of any square-free monomial ideal associated to any string hypergraph $\H$. However, it is desirable to have a {\em closed} formula for $\pd(\H)$ in terms of few combinatorial data of $\H$. This is achieved in the main result of this section, Theorem \ref{string}. In fact, we show that the projective dimension of $I(\H)$ is uniquely determined by (1) the total number of vertices in the string, (2) an invariant $b(\H)$ depending on the strings of open vertices of $\H$ (see discussion after Remark \ref{2spec}), and (3) a combinatorial invariant of the hypergraph, the {\it modularity} of $\H$, which we now introduce.
\bigskip

To define the modularity,  we first need to isolate a special class of strings. Let $\H$ be a string hypergraph containing exactly $s\geq 2$ strings of opens. We number the strings of opens from one endpoint to the other one 
and we let $n_i$ be the number of open vertices in the $i$-th string of opens.
We say $\H$ is a {\em 2-special configuration} if \begin{itemize}
\item $\H$ does not contain two adjacent closed vertices, and
\item $n_1\equiv n_s\equiv 1$ mod (3), and $n_i\equiv 2$ mod (3) for every $1<i<s$.
\end{itemize}

In Figure \ref{2S} we provide a few examples of 2-special configurations. The hypergraph $\H$ has $n_1=n_2=1$; the hypergraph $\H'$ has  $n_1=4$, $n_2=5$, $n_3=1$; the hypergraph $\H''$ has $n_1=1$, $n_2=5$, $n_3=2$ and $n_4=1$.
\begin{figure}[h] 

\begin{tikzpicture}[thick, scale=0.7]

\shade [shading=ball, ball color=black] (0,0) circle (.1) node [left] {$\H:$  } ; 
\draw  [shape=circle] (1,0) circle (.1) ;
\shade [shading=ball, ball color=black] (2,0) circle (.1);
\draw  [shape=circle] (3,0) circle (.1) ;
\shade [shading=ball, ball color=black] (4,0) circle (.1) ;
\draw [line width=1.2pt  ] (0,0)--(0.9,0);
\draw [line width=1.2pt  ] (1.1,0)--(1.9,0)  ;
\draw [line width=1.2pt  ] (2.1,0)--(2.9,0)  ;
\draw [line width=1.2pt  ] (3.1,0)--(3.9,0)  ;

\shade [shading=ball, ball color=black] (0,-1) circle (.1) node [left] {$\H':$  } ; 
\draw  [shape=circle] (1,-1) circle (.1) ;
\draw  [shape=circle] (2,-1) circle (.1) ;
\draw  [shape=circle] (3,-1) circle (.1) ;
\draw  [shape=circle] (4,-1) circle (.1) ;
\shade [shading=ball, ball color=black] (5,-1) circle (.1);
\draw  [shape=circle] (6,-1) circle (.1) ;
\draw  [shape=circle] (7,-1) circle (.1) ;
\draw  [shape=circle] (8,-1) circle (.1) ;
\draw  [shape=circle] (9,-1) circle (.1) ;
\draw  [shape=circle] (10,-1) circle (.1) ;
\shade [shading=ball, ball color=black] (11,-1) circle (.1) ;
\draw  [shape=circle] (12,-1) circle (.1) ;
\shade [shading=ball, ball color=black] (13,-1) circle (.1) ;
\draw [line width=1.2pt  ] (0,-1)--(0.9,-1);
\draw [line width=1.2pt  ] (1.1,-1)--(1.9,-1)  ;
\draw [line width=1.2pt  ] (2.1,-1)--(2.9,-1)  ;
\draw [line width=1.2pt  ] (3.1,-1)--(3.9,-1)  ;
\draw [line width=1.2pt  ] (4.1,-1)--(4.9,-1)  ;
\draw [line width=1.2pt  ] (5.1,-1)--(5.9,-1)  ;
\draw [line width=1.2pt  ] (6.1,-1)--(6.9,-1)  ;
\draw [line width=1.2pt  ] (7.1,-1)--(7.9,-1)  ;
\draw [line width=1.2pt  ] (8.1,-1)--(8.9,-1)  ;
\draw [line width=1.2pt  ] (9.1,-1)--(9.9,-1)  ;
\draw [line width=1.2pt  ] (10.1,-1)--(10.9,-1)  ;
\draw [line width=1.2pt  ] (11.1,-1)--(11.9,-1)  ;
\draw [line width=1.2pt  ] (12.1,-1)--(12.9,-1)  ;

\shade [shading=ball, ball color=black] (0,-2) circle (.1) node [left] {$\H'':$  } ; 
\draw  [shape=circle] (1,-2) circle (.1) ;
\shade [shading=ball, ball color=black] (2,-2) circle (.1);
\draw  [shape=circle] (3,-2) circle (.1) ;
\draw  [shape=circle] (4,-2) circle (.1) ;
\draw  [shape=circle] (5,-2) circle (.1) ;
\draw  [shape=circle] (6,-2) circle (.1) ;
\draw  [shape=circle] (7,-2) circle (.1) ;
\shade [shading=ball, ball color=black] (8,-2) circle (.1) ;
\draw  [shape=circle] (9,-2) circle (.1) ;
\draw  [shape=circle] (10,-2) circle (.1) ;
\shade [shading=ball, ball color=black] (11,-2) circle (.1) ;
\draw  [shape=circle] (12,-2) circle (.1) ;
\shade [shading=ball, ball color=black] (13,-2) circle (.1) ;

\draw [line width=1.2pt  ] (0,-2)--(0.9,-2);
\draw [line width=1.2pt  ] (1.1,-2)--(1.9,-2)  ;
\draw [line width=1.2pt  ] (2.1,-2)--(2.9,-2)  ;
\draw [line width=1.2pt  ] (3.1,-2)--(3.9,-2)  ;
\draw [line width=1.2pt  ] (4.1,-2)--(4.9,-2)  ;
\draw [line width=1.2pt  ] (5.1,-2)--(5.9,-2)  ;
\draw [line width=1.2pt  ] (6.1,-2)--(6.9,-2)  ;
\draw [line width=1.2pt  ] (7.1,-2)--(7.9,-2)  ;
\draw [line width=1.2pt  ] (8.1,-2)--(8.9,-2)  ;
\draw [line width=1.2pt  ] (9.1,-2)--(9.9,-2)  ;
\draw [line width=1.2pt  ] (10.1,-2)--(10.9,-2)  ;
\draw [line width=1.2pt  ] (11.1,-2)--(11.9,-2)  ;
\draw [line width=1.2pt  ] (12.1,-2)--(12.9,-2)  ;

\end{tikzpicture}

\caption{}\label{2S}
\end{figure}

\noindent Two 2-special configurations contained in the same string hypergraph are {\it disjoint} if they do not have any {\it open} vertex in common; however, they may share a closed vertex.
\begin{Definition}\label{modularity}
The {\rm modularity} $M(\H)$ of a string hypergraph $\H$ is the maximal number of pairwise disjoint 2-special configurations contained in $\H$. 
\end{Definition}
For instance, the string  $$\H=\{\{1\},\{1,2\},\{2,3\},\{3\},\{3,4\},\{4,5\},\{5\},\{5,6\},\{6,7\},\{7\}, \{7,8\}, \{8,9\}, \{9\}\}$$ has modularity $M(\H)=2$ because the two 2-special configurations $\{\{1\},\{1,2\},\{2,3\},\{3\}$, $\{3,4\},\{4,5\},\{5\}\}$ and $\{\{5\},\{5,6\},\{6,7\},\{7\}, \{7,8\}, \{8,9\}\}$ only share the closed vertex $5$. On the other hand, the string
$\H'=\{\{1\},\{1,2\},\{2,3\},\{3\},\{3,4\},\{4,5\},\{5\},\{5,6\},\{6,7\},\{7\}\}$
has modularity $M(\H')=1$. Indeed, $\H'$ does {\it not} contain two disjoint 2-special configurations, because  $\{\{1\},\{1,2\},\{2,3\},\{3\},\{3,4\},\{4,5\},\{5\}\}$ and $\{\{3\},\{3,4\},\{4,5\},\{5\},\{5,6\},\{6,7\},\{7\}\}$ are the only 2-special configurations contained in $\H'$, and they share the open vertex $4$.

The following remark follows immediately from the definition.
\begin{Remark}\label{2spec}
If a string of opens $\OO$ contains $n$ open vertices with $n\equiv 0$ modulo 3, then $\OO$ is not contained in any 2-special configuration.

Also, a 2-special configuration in $\H$  that is disjoint from every other 2-special configuration in $\H$ lies in every maximal set of 2-special configurations of $\H$.
\end{Remark}

Let $\H$ be a string hypergraph. We set:
$$\begin{array}{ll}
\mu(\H) & = \mbox{  number of vertices in }\H;\\
s(\H) & = \mbox{ number of strings of opens contained in }\H;\\
M(\H) & = \mbox{ modularity of } \H;\\
\end{array}$$
\noindent
 Moreover, if $\OO_1,\ldots,\OO_s$ are all the strings of opens in $\H$ 
 we set 
$n_i=n_i(\H)$  to be the number of open vertices in $\OO_i$. Here we introduce the invariant
$$b(\H)=s(\H)+\sum\limits_{i=1}^s\left\lfloor\frac{n_i-1}{3}\right\rfloor.$$
The above invariants can be easily computed, as the following example illustrates.
\begin{Example}\label{ex3}
Let $\H=\{\{1\},\{1,2\},\{2,3\},\{3,4\},\{4,5\},\{5,6\},\{6\},\{6,7\},\{7,8\},\{8\},\{8,9\},$ $\{9\},
\{9,10\},\{10,11\},\{11\}\},$ then 
$$\mu(\mathcal{H})=11,\; s(\mathcal{H})=3,\;M(\mathcal{H})=1,\;\mbox{ and } b(\H)=3+\left\lfloor\frac{4-1}{3}\right\rfloor+ \left\lfloor\frac{1-1}{3}\right\rfloor+ \left\lfloor\frac{1-1}{3}\right\rfloor=4$$
Example \ref{ex3} is shown in Figure~\ref{F3}.
\begin{figure}[h] 

\begin{tikzpicture}[thick, scale=0.7]

\shade [shading=ball, ball color=black] (0,0) circle (.1) node [left] {$\mathcal{H}:$  } ; 
\draw  [shape=circle] (1,0) circle (.1) ;
\draw  [shape=circle] (2,0) circle (.1) ;
\draw  [shape=circle] (3,0) circle (.1) ;
\draw  [shape=circle] (4,0) circle (.1) ;
\shade [shading=ball, ball color=black] (5,0) circle (.1) ;
\draw  [shape=circle] (6,0) circle (.1) ;
\shade [shading=ball, ball color=black] (7,0) circle (.1);
\shade [shading=ball, ball color=black] (8,0) circle (.1);
\draw  [shape=circle] (9,0) circle (.1) ;
\shade [shading=ball, ball color=black] (10,0) circle (.1);
\draw [line width=1.2pt  ] (0,0)--(0.9,0);
\draw [line width=1.2pt  ] (1.1,0)--(1.9,0)  ;
\draw [line width=1.2pt  ] (2.1,0)--(2.9,0)  ;
\draw [line width=1.2pt  ] (3.1,0)--(3.9,0)  ;
\draw [line width=1.2pt  ] (4.1,0)--(4.9,0)  ;
\draw [line width=1.2pt  ] (5.1,0)--(5.9,0)  ;
\draw [line width=1.2pt  ] (6.1,0)--(6.9,0)  ;
\draw [line width=1.2pt  ] (7.1,0)--(7.9,0)  ;
\draw [line width=1.2pt  ] (8.1,0)--(8.9,0)  ;
\draw [line width=1.2pt  ] (9.1,0)--(9.9,0)  ;

\end{tikzpicture}

\caption{}\label{F3}
\end{figure}
\end{Example}

We are now ready to state the main result of this section.
\begin{Theorem}\label{string}
If $\H$ is a string hypergraph, then $\pd(\H)=\mu(\H)-b(\H)+M(\H)$.
\end{Theorem}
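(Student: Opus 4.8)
The plan is to induct on $\mu(\H)$, using the recursive reductions established in Remark~\ref{proc}: if the vertex $2$ is closed then $\pd(\H)=\pd(\H_1)+1$ (Lemma~\ref{red}), and if the vertex $2$ is open then $\pd(\H)=\pd(\H_3)+2$ (Proposition~\ref{red3}). Since each of $\H_1$ and $\H_3$ is again a string with strictly fewer vertices, it suffices to check that the right-hand side $\mu(\H)-b(\H)+M(\H)$ transforms under these two reductions in exactly the same way that $\pd(\H)$ does — that is, it drops by $1$ in the closed case and by $2$ in the open case. The base cases (the empty string, and short strings of opens with $1$, $2$, or $3$ vertices) should be verified directly against the formula.

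First I would handle the closed case ($\{2\}\in\H$): passing from $\H$ to $\H_1$ removes the endpoint $1$ and its edge $\{1,2\}$, so $\mu$ drops by $1$; since $\{2\}$ was already closed, no string of opens is touched, so $s(\H_1)=s(\H)$, all $n_i$ are unchanged, hence $b(\H_1)=b(\H)$; and one must argue $M(\H_1)=M(\H)$, which should follow because the removed piece (the edge $\{1,2\}$ between two closed vertices, or just $\{1\}$) cannot belong to any $2$-special configuration — any $2$-special configuration containing vertex $1$ would have to use the edge $\{1,2\}$ with both endpoints closed, violating the "no two adjacent closed vertices" condition, unless it is the degenerate one-vertex case, which must be analyzed. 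So the formula drops by $1$, matching Lemma~\ref{red}.

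Next I would treat the open case ($\{2\}\notin\H$): here vertex $1$ lies in a string of opens $\OO_1$ with some $n_1\ge 1$ open vertices ($n_1$ counts $2,3,\ldots$), and passing from $\H$ to $\H_3$ deletes vertices $1,2,3$ and re-closes vertex $4$ as the new endpoint. Thus $\mu$ drops by $3$. The more delicate bookkeeping is how $b$ and $M$ change, and the change depends on $n_1\bmod 3$. If $n_1\geq 3$, removing three open vertices from $\OO_1$ keeps it a string of opens with $n_1-3$ opens; then $s$ is unchanged and $\lfloor (n_1-1)/3\rfloor$ drops by $1$, so $b$ drops by $1$, forcing $M$ to stay the same for the formula to drop by $2$. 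If $n_1=1$ or $n_1=2$, then $\OO_1$ disappears entirely (vertex $4$ becomes a closed endpoint), so $s$ drops by $1$, the term $\lfloor (n_1-1)/3\rfloor=0$ is removed, so $b$ drops by $1$; again $M$ must be shown invariant. The content of the argument is therefore the invariance (or controlled change) of the modularity $M$ under these surgeries, and I expect this to be the main obstacle: one must show that a maximal disjoint family of $2$-special configurations in $\H$ restricts to one in $\H_3$ of the same size and conversely, carefully tracking $2$-special configurations that reach into the first few vertices. The key point is that a $2$-special configuration touching the endpoint region forces $n_1\equiv 1\pmod 3$ (by the congruence condition on the first string of opens in Definition of $2$-special configuration), so it occupies vertices $1,\ldots,n_1+1$ with $n_1+1\equiv 2\pmod 3$, and after deleting $1,2,3$ what remains is the tail of that configuration sitting inside $\H_3$ as (the start of) a $2$-special configuration with first open-block length $n_1-3\equiv 1\pmod 3$ — so disjoint families correspond bijectively; when $n_1\not\equiv 1\pmod 3$ no $2$-special configuration uses the endpoint at all, and the correspondence is even more transparent. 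Assembling these cases with the inductive hypothesis and the reductions of Remark~\ref{proc} then yields the formula.
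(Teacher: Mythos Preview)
Your overall architecture matches the paper's proof: induct on $\mu$, use Lemma~\ref{red} when vertex $2$ is closed and Proposition~\ref{red3} when it is open, and check that $\mu-b+M$ transforms the same way $\pd$ does. The closed case and the cases $n_1\geq 4$ and $n_1\in\{2,3\}$ are fine.

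There is a real gap in your treatment of $n_1=1$. You assert that ``$\OO_1$ disappears entirely (vertex $4$ becomes a closed endpoint), so $s$ drops by $1$ \ldots\ so $b$ drops by $1$; again $M$ must be shown invariant.'' But when $n_1=1$, vertex $3$ is closed while vertex $4$ need not be: if vertex $4$ is open (i.e.\ $\OO_1$ and $\OO_2$ are separated by the single closed vertex $3$), then deleting $1,2,3$ turns $4$ into a closed endpoint of $\H_3$, which strips the first open vertex off $\OO_2$. In that situation $n_2$ becomes $n_2-1$, and if $n_2\equiv 1\pmod 3$ then $\lfloor (n_2-1)/3\rfloor$ drops by one as well, so $b(\H_3)=b(\H)-2$, not $b(\H)-1$. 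Correspondingly, in exactly this case the $2$-special configuration $\OO_1\OO_2$ (what the paper calls a \emph{1-1 configuration}) is destroyed and cannot be rebuilt inside $\H_3$ (since $\OO_2'$ now has $n_2-1\equiv 0\pmod 3$ opens and hence lies in no $2$-special configuration), giving $M(\H_3)=M(\H)-1$. Your claim that $M$ is invariant, and your bijection argument at the end (which presumes a surviving first block of length $n_1-3$), both fail here. The net effect $-(\Delta b)+(\Delta M)=-(-2)+(-1)=1$ still yields the required drop of $2$ in $\mu-b+M$, so the theorem is true, but your bookkeeping does not establish it. The paper isolates precisely this case in Lemmas~\ref{H3} and~\ref{b}; you need an analogous case split (and the corresponding modularity argument showing $M$ drops by exactly one) to close the gap.
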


As an example, if $\H$ is the hypergraph of Example \ref{ex3}, then $\pd(\H)=11-4+1=8$. The proof of Theorem \ref{string} is postponed to the end of this section. Here, we wish to make a few observations.
The first one is that the class of ideals of Theorem \ref{string} is not covered by previous results of Schweig-Dao. Indeed, the next simple example shows that there are string hypergraphs whose corresponding clutters are not edgewise dominant (cf. \cite{DS}) and for which the formula $\pd(\mathcal{C})=V(\mathcal{C})-i(\mathcal{C})$ provided in \cite{DS} does not hold.

\begin{Example}\label{DS2}
Let $\mathcal{H}$ and $\mathcal{C}$ be the hypergraph and clutter of the ideal $I=(ab,bc,cde,ef,fg)$, see Figure~\ref{Cstring}. Then $\pd(\mathcal{H})=5-2+1\,>\,V(\mathcal{C})-i(\mathcal{C})=7-4$.
\begin{figure}[h] 

\begin{tikzpicture}[thick, scale=0.7]

\shade [shading=ball, ball color=black] (0,0) circle (.1) node [left] {$\mathcal{H}:$  } ; 
\draw  [shape=circle] (1,0) circle (.1) ;
\shade [shading=ball, ball color=black] (2,0) circle (.1) ;
\draw  [shape=circle] (3,0) circle (.1) ;
\shade [shading=ball, ball color=black] (4,0) circle (.1) ;
\draw [line width=1.2pt  ] (0,0)--(0.9,0);
\draw [line width=1.2pt  ] (1.1,0)--(1.9,0)  ;
\draw [line width=1.2pt  ] (2.1,0)--(2.9,0)  ;
\draw [line width=1.2pt  ] (3.1,0)--(3.9,0)  ;

\shade [shading=ball, ball color=black] (6,0) circle (.1) node [left] {$\mathcal{C}:$  } ; 
\shade [shading=ball, ball color=black] (7,0) circle (.1) ;
\shade [shading=ball, ball color=black] (8,0) circle (.1) ;
\shade [shading=ball, ball color=black] (9,0) circle (.1) ;
\shade [shading=ball, ball color=black] (10,0) circle (.1) ;
\shade [shading=ball, ball color=black] (11,0) circle (.1) ;
\shade [shading=ball, ball color=black] (8.5,-1) circle (.1) ;
\path [pattern=north east lines]   (8,0)--(9,0)--(8.5,-1)--cycle;
\draw [line width=1.2pt  ] (6,0)--(6.9,0);
\draw [line width=1.2pt  ] (7.1,0)--(7.9,0)  ;
\draw [line width=1.2pt  ] (9.1,0)--(9.9,0);
\draw [line width=1.2pt  ] (10.1,0)--(10.9,0)  ;
\end{tikzpicture}

\caption{}\label{Cstring}
\end{figure}
\end{Example}
The second observation is that Theorem \ref{string} also allows one to compute the projective dimension of $I(\H)$ when $\H$ is the disjoint union of a finite number of string hypergraphs.

The next remark is that, in general, permuting the strings of opens of $\H$ has an impact on $\pd(\H)$. In fact, the particular order of the strings of opens does not affect $b(\H)$, but it may modify the modularity, which, in turn, impacts the projective dimension of $I(\H)$. For instance, set $\H'=\{\{1\},\{1,2\},\{2,3\},\{3\},\{3,4\},\{4,5\},\{5,6\},\{6,7\},\{7\},\{7,8\},\{8,9\},\{9\}\}$ and $\H''=\{\{1\},\{1,2\},\{2,3\},\{3\},\{3,4\},\{4,5\},\{5\},\{5,6\},\{6,7\},\{7,8\},\{8,9\},\{9\}\}$, and note that $\H''$ can be obtained by permuting  
the strings of opens of $\H'$. The hypergraphs $\H'$ and $\H''$ are shown in Figure.~\ref{exchange}. Note that $\pd(\H')=6$, whereas $\pd(\H'')=7$; this difference depends on the fact that, by Remark \ref{2spec}, $\H'$ has modularity $0$, whereas $M(\H'')=1$. 
\begin{figure}[h] 
\begin{tikzpicture}[thick, scale=0.7]

\shade [shading=ball, ball color=black] (0,0) circle (.1) node [left] {$\H':$  } ; 
\draw  [shape=circle] (1,0) circle (.1) ;
\shade [shading=ball, ball color=black] (2,0) circle (.1) ;
\draw  [shape=circle] (3,0) circle (.1) ;
\draw  [shape=circle] (4,0) circle (.1) ;
\draw  [shape=circle] (5,0) circle (.1) ;
\shade [shading=ball, ball color=black] (6,0) circle (.1);
\draw  [shape=circle] (7,0) circle (.1) ;
\shade [shading=ball, ball color=black] (8,0) circle (.1);
\draw [line width=1.2pt  ] (0,0)--(0.9,0);
\draw [line width=1.2pt  ] (1.1,0)--(1.9,0)  ;
\draw [line width=1.2pt  ] (2.1,0)--(2.9,0)  ;
\draw [line width=1.2pt  ] (3.1,0)--(3.9,0)  ;
\draw [line width=1.2pt  ] (4.1,0)--(4.9,0)  ;
\draw [line width=1.2pt  ] (5.1,0)--(5.9,0)  ;
\draw [line width=1.2pt  ] (6.1,0)--(6.9,0)  ;
\draw [line width=1.2pt  ] (7.1,0)--(7.9,0)  ;

\shade [shading=ball, ball color=black] (10,0) circle (.1) node [left] {$\H'':$  } ; 
\draw  [shape=circle] (11,0) circle (.1) ;
\shade [shading=ball, ball color=black] (12,0) circle (.1) ;
\draw  [shape=circle] (13,0) circle (.1) ;
\shade [shading=ball, ball color=black] (14,0) circle (.1);
\draw  [shape=circle] (15,0) circle (.1) ;
\draw  [shape=circle] (16,0) circle (.1) ;
\draw  [shape=circle] (17,0) circle (.1) ;
\shade [shading=ball, ball color=black] (18,0) circle (.1);
\draw [line width=1.2pt  ] (10,0)--(10.9,0);
\draw [line width=1.2pt  ] (11.1,0)--(11.9,0)  ;
\draw [line width=1.2pt  ] (12.1,0)--(12.9,0)  ;
\draw [line width=1.2pt  ] (13.1,0)--(13.9,0)  ;
\draw [line width=1.2pt  ] (14.1,0)--(14.9,0)  ;
\draw [line width=1.2pt  ] (15.1,0)--(15.9,0)  ;
\draw [line width=1.2pt  ] (16.1,0)--(16.9,0)  ;
\draw [line width=1.2pt  ] (17.1,0)--(17.9,0)  ;

\end{tikzpicture}
\caption{}\label{exchange}
\end{figure}

In contrast to the above examples, the next result shows that, in some cases, permutations of the strings of opens can be performed without modifying the projective dimension. 
\begin{Corollary}\label{order}
Let $\H$ and $\H'$ be string hypergraphs both having $\mu$ vertices, containing exactly $s$ strings of opens, which have $n_1,\ldots,n_s$ open vertices. If all the $n_i$ except, possibly, one satisfy $n_i\equiv \!\!\!\!\not\;\,1$ mod 3, then $\pd(\H)=\pd(\H')$.
\end{Corollary}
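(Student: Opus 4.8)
The plan is to show that under the hypothesis—at most one $n_i$ is congruent to $1$ modulo $3$—the two invariants $b$ and $M$ appearing in Theorem~\ref{string} depend only on $\mu$, $s$, and the multiset $\{n_1,\ldots,n_s\}$, and not on the order in which the strings of opens are arranged. Once this is established, the conclusion $\pd(\H)=\mu-b(\H)+M(\H)=\mu-b(\H')+M(\H')=\pd(\H')$ follows immediately from Theorem~\ref{string}, since $\mu(\H)=\mu(\H')=\mu$ and $s(\H)=s(\H')=s$ by assumption, and $\H,\H'$ have the same multiset of $n_i$'s (only the order differs).

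First I would dispatch $b$: by its very definition $b(\H)=s+\sum_{i=1}^s\lfloor (n_i-1)/3\rfloor$ is a symmetric function of the multiset $\{n_1,\ldots,n_s\}$, hence $b(\H)=b(\H')$ with no hypothesis needed. The entire content of the corollary is therefore the claim that $M(\H)=M(\H')$ under the stated congruence condition. Here the key structural observation is Remark~\ref{2spec}: a string of opens with $n\equiv 0\pmod 3$ open vertices cannot sit inside any $2$-special configuration, and the definition of $2$-special configuration requires the two \emph{terminal} strings of opens to have $n\equiv 1\pmod 3$ while every \emph{interior} one has $n\equiv 2\pmod 3$. Thus a $2$-special configuration inside $\H$ is a maximal consecutive run of strings of opens (together with the closed vertices separating them, and no two adjacent closed vertices inside the run) whose sequence of residues reads $1,2,2,\ldots,2,1$.

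The heart of the argument is then a counting lemma: I would argue that when at most one $n_i$ satisfies $n_i\equiv 1\pmod 3$, the modularity $M(\H)$ is forced to be either $0$ or $1$, and which of the two it is depends only on the multiset of residues—so it is permutation-invariant. Indeed, a single $2$-special configuration needs \emph{two} strings of opens with residue $1$ (its two ends)—unless it consists of a single string of opens, which then must have residue $1\equiv 1\pmod 3$ \emph{and} simultaneously be ``the first and the last,'' i.e.\ it is a $2$-special configuration of length one precisely when that lone string has $n\equiv 1$; but two \emph{disjoint} such configurations would consume, in total, more ``residue-$1$'' strings than are available (each needs at least one residue-$1$ string not shared with the other, since disjoint configurations may share only a closed vertex, not an open string of opens). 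With at most one residue-$1$ string present, $M(\H)\le 1$, and $M(\H)=1$ exactly when $\H$ contains at least one $2$-special configuration, which by the residue description happens iff either (a) some $n_i\equiv 1\pmod 3$, or (b) $s\ge 2$ and one can find a consecutive block of residue-$2$ strings bookended appropriately—but (b) already requires two residue-$1$ strings, so under the hypothesis only (a) survives, and (a) depends only on the multiset. I would also need to handle a couple of small/degenerate cases ($s=1$; no residue-$1$ string at all, giving $M=0$ trivially; the ``adjacent closed vertices'' clause, which only restricts configurations further and is itself order-sensitive but cannot create a configuration where the multiset forbids one).

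The main obstacle I anticipate is making the ``two disjoint $2$-special configurations need at least two residue-$1$ strings'' step fully rigorous—one must carefully account for the shared-closed-vertex possibility and for the edge case of length-one configurations, and confirm that a residue-$1$ string of opens cannot serve as an endpoint for two distinct configurations at once. Once that combinatorial bookkeeping is nailed down, everything else is immediate from Theorem~\ref{string} and Remark~\ref{2spec}.
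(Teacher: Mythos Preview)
Your overall strategy matches the paper's: apply Theorem~\ref{string}, note that $b(\H)$ is symmetric in the $n_i$, and argue that $M(\H)=M(\H')$. The paper's proof is one line: the hypothesis forces $M(\H)=M(\H')=0$, and Theorem~\ref{string} finishes.

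You overcomplicate the modularity analysis because you misread the definition of a $2$-special configuration. By definition a $2$-special configuration contains $s\geq 2$ strings of opens, with the \emph{first and last} satisfying $n\equiv 1\pmod 3$. There is no ``length-one'' $2$-special configuration; a single string of opens with $n\equiv 1$ is never one. Consequently every $2$-special configuration already consumes \emph{two distinct} residue-$1$ strings of opens, and under the hypothesis (at most one such string) there are no $2$-special configurations at all, so $M(\H)=0$ immediately. Your case (a) (``some $n_i\equiv 1$ gives $M=1$'') is therefore false, and the worries about shared closed vertices, length-one configurations, and the adjacent-closed-vertex clause all evaporate. Once you drop the spurious $s=1$ case, your argument collapses to exactly the paper's one-liner.
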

\begin{proof}
The assumptions yield $M(\H)=M(\H')=0$. Now, Theorem \ref{string} concludes the proof.
\end{proof}

We provide an example illustrating Corollary \ref{order}.
\begin{Example}\label{mod}
Let $\H$ and $\H'$ be string hypergraphs both having $\mu=15$ total vertices, and $s=3$ strings of opens that have $1$, $3$ and $5$ open vertices, respectively. Then $\pd(\H)=\pd(\H')=11$. 
Possible figures of $\H$ and $\H'$ are shown below, in Figure~\ref{Switch}.
\begin{figure}[h] 

\begin{tikzpicture}[thick, scale=0.7]
\shade [shading=ball, ball color=black] (0,0) circle (.1) node [left] {$\H:$  } ; 
\draw  [shape=circle] (1,0) circle (.1) ;
\shade [shading=ball, ball color=black] (2,0) circle (.1) ;
\draw  [shape=circle] (3,0) circle (.1) ;
\draw  [shape=circle] (4,0) circle (.1) ;
\draw  [shape=circle] (5,0) circle (.1) ;
\shade [shading=ball, ball color=black] (6,0) circle (.1) ;
\shade [shading=ball, ball color=black] (7,0) circle (.1) ;
\shade [shading=ball, ball color=black] (8,0) circle (.1) ;
\draw  [shape=circle] (9,0) circle (.1) ;
\draw  [shape=circle] (10,0) circle (.1) ;
\draw  [shape=circle] (11,0) circle (.1) ;
\draw  [shape=circle] (12,0) circle (.1) ;
\draw  [shape=circle] (13,0) circle (.1) ;
\shade [shading=ball, ball color=black] (14,0) circle (.1) ;
\draw [line width=1.2pt  ] (0,0)--(0.9,0);
\draw [line width=1.2pt  ] (1.1,0)--(1.9,0)  ;
\draw [line width=1.2pt  ] (2.1,0)--(2.9,0)  ;
\draw [line width=1.2pt  ] (3.1,0)--(3.9,0)  ;
\draw [line width=1.2pt  ] (4.1,0)--(4.9,0)  ;
\draw [line width=1.2pt  ] (5.1,0)--(5.9,0)  ;
\draw [line width=1.2pt  ] (6.1,0)--(6.9,0)  ;
\draw [line width=1.2pt  ] (7.1,0)--(7.9,0)  ;
\draw [line width=1.2pt  ] (8.1,0)--(8.9,0)  ;
\draw [line width=1.2pt  ] (9.1,0)--(9.9,0)  ;
\draw [line width=1.2pt  ] (10.1,0)--(10.9,0)  ;
\draw [line width=1.2pt  ] (11.1,0)--(11.9,0)  ;
\draw [line width=1.2pt  ] (12.1,0)--(12.9,0)  ;
\draw [line width=1.2pt  ] (13.1,0)--(13.9,0)  ;

\shade [shading=ball, ball color=black] (0,-1) circle (.1) node [left] {$\H':$  } ; 
\draw  [shape=circle] (1,-1) circle (.1) ;
\draw  [shape=circle] (2,-1) circle (.1) ;
\draw  [shape=circle] (3,-1) circle (.1) ;
\draw  [shape=circle] (4,-1) circle (.1) ;
\draw  [shape=circle] (5,-1) circle (.1) ;
\shade [shading=ball, ball color=black] (6,-1) circle (.1);
\draw  [shape=circle] (7,-1) circle (.1) ;
\shade [shading=ball, ball color=black] (8,-1) circle (.1) ;
\shade [shading=ball, ball color=black] (9,-1) circle (.1) ;
\draw  [shape=circle] (10,-1) circle (.1) ;
\draw  [shape=circle] (11,-1) circle (.1) ;
\draw  [shape=circle] (12,-1) circle (.1) ;
\shade [shading=ball, ball color=black] (13,-1) circle (.1) ;
\shade [shading=ball, ball color=black] (14,-1) circle (.1) ;
\draw [line width=1.2pt  ] (0,-1)--(0.9,-1);
\draw [line width=1.2pt  ] (1.1,-1)--(1.9,-1)  ;
\draw [line width=1.2pt  ] (2.1,-1)--(2.9,-1)  ;
\draw [line width=1.2pt  ] (3.1,-1)--(3.9,-1)  ;
\draw [line width=1.2pt  ] (4.1,-1)--(4.9,-1)  ;
\draw [line width=1.2pt  ] (5.1,-1)--(5.9,-1)  ;
\draw [line width=1.2pt  ] (6.1,-1)--(6.9,-1)  ;
\draw [line width=1.2pt  ] (7.1,-1)--(7.9,-1)  ;
\draw [line width=1.2pt  ] (8.1,-1)--(8.9,-1)  ;
\draw [line width=1.2pt  ] (9.1,-1)--(9.9,-1)  ;
\draw [line width=1.2pt  ] (10.1,-1)--(10.9,-1)  ;
\draw [line width=1.2pt  ] (11.1,-1)--(11.9,-1)  ;
\draw [line width=1.2pt  ] (12.1,-1)--(12.9,-1)  ;
\draw [line width=1.2pt  ] (13.1,-1)--(13.9,-1)  ;

\end{tikzpicture}

\caption{}\label{Switch}
\end{figure}
\end{Example}
We now apply Theorem \ref{string} to explicitly compute the projective dimension of two simple classes of examples. Note the impact of the modularity in the second class of examples.
\begin{Corollary}\label{examples}
 Let $\H$ be a string hypergraph.
\begin{itemize}
\item[(1)] If $\H$ contains only $s=1$ string of open, let $n_1=n\geq 1$ be its number of open vertices, then 
$$\pd(\H)=\mu(\H)-1-\left\lfloor\frac{n-1}{3}\right\rfloor.$$ 
\item[(2)] Assume $\H$ contains $s=2$ strings of opens, having $n_1,n_2\geq 1$ open vertices, respectively. Let $m\geq 1$ be the number of adjacent closed vertices separating the strings of opens, then 
$$\pd(\H)=\left\{\begin{array}{ll}
\mu(\H)-1-\left\lfloor\frac{n_1-1}{3}\right\rfloor-\left\lfloor\frac{n_2-1}{3}\right\rfloor, \mbox{ if }m=1, \mbox{ and } n_1\equiv n_2\equiv 1 \mbox{ mod } 3\\
{}\\
\mu(\H)-2-\left\lfloor\frac{n_1-1}{3}\right\rfloor-\left\lfloor\frac{n_2-1}{3}\right\rfloor, \mbox{ otherwise }
\end{array}\right.$$
\end{itemize}
\end{Corollary}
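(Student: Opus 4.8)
The plan is to read both formulas off Theorem \ref{string}, which gives $\pd(\H)=\mu(\H)-b(\H)+M(\H)$; since $\mu(\H)$ is given, everything reduces to evaluating $b(\H)$ and the modularity $M(\H)$ in each case. Evaluating $b(\H)$ requires no work: from $b(\H)=s(\H)+\sum_{i=1}^{s}\lfloor (n_i-1)/3\rfloor$, setting $s(\H)=1$ gives $b(\H)=1+\lfloor (n-1)/3\rfloor$ in part (1), and setting $s(\H)=2$ gives $b(\H)=2+\lfloor (n_1-1)/3\rfloor+\lfloor (n_2-1)/3\rfloor$ in part (2). So the entire content of the proof is the computation of $M(\H)$.

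For part (1), I would observe that a 2-special configuration is by definition a string hypergraph containing at least $s=2$ strings of opens. Any 2-special configuration contained in $\H$ is an interval sub-string of $\H$ delimited by two closed vertices of $\H$ (with the induced faces), as in the examples following Definition \ref{modularity}, and such an interval contains only a subset of the strings of opens of $\H$. Since $\H$ has a single string of opens, no interval sub-string of $\H$ has two strings of opens, so $\H$ contains no 2-special configuration; hence $M(\H)=0$, and Theorem \ref{string} gives $\pd(\H)=\mu(\H)-1-\lfloor (n-1)/3\rfloor$.

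For part (2), the first point is that any 2-special configuration $\C\subseteq\H$ has exactly two strings of opens, which must then be the two strings of opens of $\H$; in particular $n_1(\C)=n_1$ and $n_2(\C)=n_2$, and any two such $\C$ share their open vertices, so $M(\H)\le 1$. To decide when $M(\H)=1$, write $\H$ from one endpoint to the other as a block of closed vertices, the $n_1$ open vertices, the $m$ separating closed vertices, the $n_2$ open vertices, and a final block of closed vertices. Any $\C$ containing both strings of opens is an interval containing the entire separating block, so when $m\ge 2$ it contains two adjacent closed vertices and is not 2-special. When $m=1$, the minimal candidate---the interval running from the closed vertex immediately before the first string of opens to the closed vertex immediately after the second---has closed vertices consisting of that left vertex, the single separating vertex, and that right vertex, no two of which are adjacent; hence this $\C$ is 2-special exactly when the remaining numerical requirement holds, namely $n_1\equiv n_2\equiv 1 \bmod 3$ (the condition on intermediate strings of opens being vacuous for $s=2$). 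Therefore $M(\H)=1$ if $m=1$ and $n_1\equiv n_2\equiv 1 \bmod 3$, and $M(\H)=0$ otherwise; substituting into Theorem \ref{string} produces the two displayed formulas.

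The argument is essentially bookkeeping, and no genuine obstacle is expected. The one point I would state carefully is that an interval sub-string of $\H$ cannot cut into the interior of a maximal run of open vertices---because the endpoints of any string hypergraph are closed---so that the strings of opens of such a sub-string are exactly those of $\H$ lying within it, with unchanged open-vertex counts. The computation of $M(\H)$ in part (2) rests on this observation, so it is the step to verify most carefully.
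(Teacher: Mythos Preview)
Your proposal is correct and follows essentially the same approach as the paper: apply Theorem \ref{string}, compute $b(\H)$ directly from its definition, and determine $M(\H)$ in each case. The paper's proof is terser---it simply asserts $M(\H)=0$ in (1) and that $M(\H)=1$ iff $m=1$ and $n_1\equiv n_2\equiv 1\bmod 3$ in (2)---whereas you supply the justification (interval sub-strings are delimited by closed vertices, so their strings of opens are among those of $\H$); this extra care is appropriate and changes nothing substantive.
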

\begin{proof}
(1) since $s=1$, we have $M(\H)=0$ and $b(\H)=1+\left\lfloor\frac{n-1}{3}\right\rfloor$, whence the formula follows by Theorem \ref{string}.
For (2), one has $b(\H)=2+\left\lfloor\frac{n_1-1}{3}\right\rfloor+\left\lfloor\frac{n_2-1}{3}\right\rfloor$, whence $\pd(\H)=\mu(\H)-2+\left\lfloor\frac{n_1-1}{3}\right\rfloor+\left\lfloor\frac{n_2-1}{3}\right\rfloor+M(\H)$. Also, since $s=2$, we have $0\leq M(\H)\leq 1$ and $M(\H)=1$ if and only if $m=1$ and $n_1\equiv n_2\equiv 1$ modulo 3. 
\end{proof}

We conclude this section with the proof of Theorem \ref{string}.  We first need a couple of lemmas to compare the combinatorial invariants of $\H$ and $\H_3$. A {\it 1-1 configuration} is a 2-special configuration consisting of $s=2$ strings of opens and for which $n_1=1$.  From the definition it follows that the second string of opens of a 1-1 configuration has a number $n_2$ of open vertices that satisfies $n_2\equiv 1$ modulo 3. An example is given by the hypergraph $\H$ in Figure \ref{2S}. 
\begin{Lemma}\label{H3}
Let $\H$ be a string hypergraph with $\mu(\H)\geq 3$ vertices. Assume $\{2\}\notin \H$. Then $M(\H_3)=M(\H)-1$ if and only if the vertex $2$ is an isolated open vertex contained in a 1-1 special configuration. In all other cases, one has $M(\H_3)=M(\H)$.  
\end{Lemma}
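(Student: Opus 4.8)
The plan is to analyze carefully how the combinatorial data near the endpoint $1$ changes when we pass from $\H$ to $\H_3$, under the hypothesis $\{2\}\notin \H$. Recall that the vertex $2$ is the unique neighbor of the endpoint $1$, and since $\{2\}\notin \H$, the vertex $2$ is open; hence the first string of opens $\OO_1$ of $\H$ contains the vertex $2$. Passing to $\H_3$ deletes the vertices $1,2,3$. There are two cases for the vertex $3$: if $\{3\}\in\H$ then $\OO_1$ consists only of the vertex $2$ (so $n_1=1$), and $\H_3$ is obtained from $\H$ by deleting the entire first string of opens $\OO_1=\{2\}$ together with its closed endpoints $1$ and $3$ (vertex $3$ becomes the deleted endpoint; the next closed vertex after $3$, which exists since $\{3\}\in\H$ means $3$ is not an endpoint unless $\mu=3$, becomes the new endpoint $1$ of $\H_3$). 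If $\{3\}\notin\H$ then $3\in\OO_1$ as well, so $n_1\geq 2$, and $\H_3$ has the same strings of opens as $\H$ except that $n_1$ drops by $3$ (i.e. $n_1(\H_3)=n_1(\H)-3$; note $n_1(\H_3)$ might be $0$ or negative-looking, but in fact $n_1\geq 3$ in the subcase that produces a genuine string of opens, and if $n_1\in\{2\}$ we are removing a partial piece — this boundary bookkeeping must be done carefully).

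Next I would translate the modularity in terms of 2-special configurations. A maximal set of pairwise disjoint 2-special configurations in $\H$ either uses a 2-special configuration $\C$ that involves the first string of opens $\OO_1$, or it does not. The key observations are: (1) since the endpoint side forces $n_1\equiv 1$ mod $3$ for any 2-special configuration using $\OO_1$ (by the definition of 2-special configuration, the first string of opens must have $n_1\equiv 1$ mod $3$), such a $\C$ exists in $\H$ only when $n_1\equiv 1$ mod $3$; (2) deleting the first $3$ vertices affects only 2-special configurations that involve $\OO_1$, because configurations disjoint from $\OO_1$ survive unchanged in $\H_3$, and conversely configurations of $\H_3$ not touching the (new) first string of opens lift back to $\H$. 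Using Remark \ref{2spec} — that a 2-special configuration disjoint from all others lies in every maximal set — I would reduce to understanding whether a 2-special configuration based at $\OO_1$ can be chosen inside a maximal disjoint family, and whether its removal is "compensated" inside $\H_3$.

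The heart of the argument is the following dichotomy. Suppose first that the vertex $2$ is an isolated open vertex (i.e. $n_1=1$) contained in a $1$-$1$ configuration $\C$; by definition of $1$-$1$ configuration, the second string of opens $\OO_2$ of $\H$ has $n_2\equiv 1$ mod $3$, and $\C$ uses $\OO_1$ and $\OO_2$. In $\H_3$, the string of opens $\OO_2$ becomes the \emph{first} string of opens, still with $n_2\equiv 1$ mod $3$, but it can no longer be the first string of a $1$-$1$ configuration paired with something earlier (there is nothing earlier). One then checks that $\C$ cannot be replaced: any $2$-special configuration of $\H_3$ using the new first string $\OO_2$ must pair it with $\OO_3$ requiring $n_3\equiv 1$ mod $3$, which is a different (more restrictive) situation than in $\H$, and the parity/counting bookkeeping shows the maximal disjoint family drops by exactly one, so $M(\H_3)=M(\H)-1$. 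Conversely, in all other cases — namely $n_1\not\equiv 1$ mod $3$ (so $\OO_1$ is in no $2$-special configuration by Remark \ref{2spec}, and deleting $3$ vertices from it keeps $n_1(\H_3)\not\equiv 1$ mod $3$ too, leaving modularity untouched), or $n_1\equiv 1$ mod $3$ with $n_1\geq 4$ (then $n_1(\H_3)=n_1-3\equiv 1$ mod $3$ still, so a $2$-special configuration based at $\OO_1$ survives verbatim in $\H_3$), or $n_1=1$ but $2$ lies in no $1$-$1$ configuration (meaning $\OO_2$ has $n_2\not\equiv 1$ mod $3$, so no $2$-special configuration of $\H$ uses $\OO_1$ at all, and in $\H_3$ the first string $\OO_2$ again participates in no $2$-special configuration as a first string) — one argues that there is a modularity-preserving bijection between maximal disjoint families, giving $M(\H_3)=M(\H)$. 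The main obstacle I anticipate is handling the $n_1=1$, $1$-$1$-configuration case rigorously: one must show not merely that \emph{one particular} maximal family loses a configuration, but that \emph{no} maximal family of $\H_3$ can recover it — this requires the observation that the leftmost string of opens in $\H_3$ has no partner to its left, combined with a careful count (via $b$ and the floor terms) of how many disjoint configurations the remaining strings $\OO_2,\OO_3,\ldots,\OO_s$ can support, which is where I would spend the most care.
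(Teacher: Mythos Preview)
Your overall approach matches the paper's: case analysis on $n_1$ and on the local structure near the endpoint, then a bijection between maximal disjoint families of $2$-special configurations. Most of the subcases are handled correctly in outline. However, there is a genuine gap in the subcase $n_1=1$ with vertex $4$ open and $n_2\equiv 2\pmod 3$ (which falls under ``$2$ lies in no $1$-$1$ configuration''). You assert that ``no $2$-special configuration of $\H$ uses $\OO_1$ at all'', but this is false: $\OO_1$ can be the first string of a $2$-special configuration $\s=\OO_1\OO_2\cdots\OO_r$ with $r\geq 3$, since $n_1=1\equiv 1$ and $n_2\equiv 2$ makes $\OO_2$ a legitimate interior string. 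Consequently, your follow-up claim that in $\H_3$ the new first string $\OO_2'$ ``participates in no $2$-special configuration as a first string'' is also false, because $\OO_2'$ now has $n_2-1\equiv 1\pmod 3$ open vertices and \emph{does} serve as the first string of the configuration $\s'=\OO_2'\OO_3\cdots\OO_r$. A concrete counterexample to your reasoning is the string with open-counts $(n_1,n_2,n_3)=(1,2,1)$ separated by single closed vertices: here $\OO_1\OO_2\OO_3$ is a $2$-special configuration, yet your argument would declare $\OO_1$ unused.

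The paper repairs exactly this point: when $\OO_1$ lies in some $\s=\OO_1\OO_2\cdots\OO_r$ with $r>2$, one replaces $\s$ by $\s'=\OO_2'\OO_3\cdots\OO_r$ in $\H_3$ (valid because $n_2-1\equiv 1$), and this gives the required cardinality-preserving correspondence $A\leftrightarrow A'$. Without this replacement step, your ``modularity-preserving bijection'' does not go through in this subcase. Two smaller points: your appeal to Remark~\ref{2spec} for the case $n_1\not\equiv 1\pmod 3$ is imprecise, since that remark only covers $n_1\equiv 0$; the case $n_1\equiv 2$ needs instead the positional observation that $\OO_1$, being leftmost, can only occur as the \emph{first} string of a configuration. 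Also, your parenthetical ``meaning $n_2\not\equiv 1\pmod 3$'' omits the possibility that vertex $4$ is closed (two adjacent closed vertices $3$ and $4$), which the paper treats separately and which you should not fold into the same argument.
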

\begin{proof}
Let $\OO_1,\ldots,\OO_s$ be the strings of opens in $\H$ and let $n_1$ be the number of open vertices in the string $\OO_1$ containing the vertex $2$. If $2\leq n_1\leq 3$, then $\OO_1$ is not part of any 2-special configuration of $\H$ and, moreover, $\OO_2,\ldots,\OO_s$ are all the strings of opens of $\H_3$. Hence, $\s$ is a 2-special configuration of $\H$ if and only if $\s$ is a 2-special configuration of $\H_3$, proving that $M(\H_3)=M(\H)$. 

If $n_1>3$, then the strings of opens of $\H_3$ are 
 $\OO_1',\ldots,\OO_s$, where $\OO_1'$ has $n_1-3$ opens. Since $n_1\equiv n_1-3$ modulo 3, we have that $\s=\OO_1\OO_2\ldots\OO_r$ is a 2-special configuration in $\H$ if and only if $\s'=\OO_1'\OO_2\ldots\OO_r$ is a 2-special configuration in $\H_3$. This implies $M(\H_3)=M(\H)$.
 
 We may then assume $n_1=1$. Then, $2$ is an isolated open vertex and $\{3\}\in \H$. If $\{4\}\in \H$, then, similarly to the case $2\leq n_1\leq 3$, $\OO_1$ is not contained in any 2-special configuration, and the strings of opens of $\H_3$ are $\OO_2,\ldots,\OO_s$, showing that $M(\H_3)=M(\H)$. We may then assume the vertex $4$ is open. Let $n_2$ be the number of opens in the string of opens $\OO_2$ containing the vertex $4$. 
  Since $n_1=1$ and the vertex $4$ is open, the strings of opens of $\H_3$ are $\OO_2',\OO_3,\ldots,\OO_s$ where $\OO_2'$ is either the empty string (if $n_2=1$) or $\OO_2'$ has exactly $n_2-1$ open vertices.
 
 First, assume $\OO_1$ is not part of a 1-1 configuration. We show that a maximal set $A$ of disjoint 2-special configurations in $\H$ correspond to a maximal set $A'$ of disjoint 2-special configurations in $\H_3$ with $|A'|=|A|$, i.e., with the same cardinality. If $\OO_1$ is not contained in any 2-special configuration of $A$, then, since $\OO_1$ is not part of a 1-1 configuration, also $\OO_2$ is not part of any 2-special configuration of $A$. Then all 2-special configurations of $A$ are contained in $\H_3$, hence one just need to set $A'=A$. If, instead, $\OO_1$ is part of a 2-special configuration $\s=\OO_1\OO_{2}\ldots\OO_r$ of $A$,  then, by assumption, $r>2$ and $n_2\equiv 2$ modulo 3. Hence, we have $n_2-1\equiv 1$ modulo 3 and $\s'=\OO_2'\OO_3\ldots\OO_r$ is a 2-special configuration of $\H_3$. Writing $A=\{\s,\s_1,\ldots,\s_t\}$, then $A'=\{\s',\s_1,\ldots,\s_t\}$ is a maximal set of disjoint 2-special configurations of $\H_3$, proving $M(\H)\leq M(\H_3)$. The other inequality follows similarly, giving $M(\H)=M(\H_3)$.
 
 It remains to show that if $\OO_1$ is part of a 1-1 special configuration, then $M(\H_3)=M(\H)-1$. By assumption, $n_2\equiv 1$ modulo 3, hence $n_2-1\equiv 0$ modulo 3. Then, by Remark \ref{2spec}, the string $\OO_2'$ is not part of any 2-special configuration of $\H_3$. Then, if $A'$ is a maximal set of disjoint  2-special configurations of $\H_3$, one has that $A=\{\s\}\cup A$ is a maximal set of disjoint 2-special configurations of $\H$, where $\s=\OO_1\OO_2$. This proves $M(\H_3)\leq M(\H)-1$.
 To see the other inequality, let $A=\{\s_0,\s_1,\ldots,\s_m\}$ be a maximal set of disjoint 2-special configurations of $\H$.
 If $\overline{\s}=\OO_1\OO_2$ form a 2-special configuration that is disjoint from every other 2-special configuration of $\H$, then, by Remark \ref{2spec}, we have $\overline{\s}$ is in $A$, say $\s_0=\overline{\s}$. The statement now follows because $A'=\{\s_1,\ldots,\s_m\}$ is a set of disjoint 2-special configurations of $\H_3$. We may then assume $\OO_2$ is also contained in another 2-special configuration $\widetilde{\s}=\OO_2\ldots\OO_r$ of $\H$, for some $r\geq 3$. 
 Since  every maximal set of disjoint 2-special configurations of $\H$ contains either $\overline{\s}$ or $\widetilde{\s}$, we may assume either $\s_0=\overline{\s}$ or $\s_0=\widetilde{\s}$. Then, similarly to the above, $A'=\{\s_1,\ldots,\s_m\}$ is a set of disjoint 2-special configurations contained in $\H_3$, showing that $M(\H_3)\geq M(\H)-1$.
\end{proof}

\begin{Lemma}\label{b}
Let $\H$ be a string with $\mu\geq 3$ vertices. Assume $\{2\}\notin \H$. Then $b(\H_3)=b(\H)-2$ if and only if $2$ is an isolated open vertex contained in a 1-1 configuration. In all other cases one has $b(\H_3)=b(\H_3)-1$.
\end{Lemma}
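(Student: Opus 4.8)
The strategy is to track how the invariant $b(\H)=s(\H)+\sum_{i=1}^s\lfloor (n_i-1)/3\rfloor$ changes as we pass from $\H$ to $\H_3$, by a case analysis on the number $n_1$ of open vertices in the string of opens $\OO_1$ containing the vertex $2$ (which is open by hypothesis). The guiding observation is that removing the three vertices $1,2,3$ from the end of the string affects exactly the first string of opens, and possibly merges it with structural data to its right only when $n_1=1$. I will organize the cases exactly as in the proof of Lemma \ref{H3}, so that the two lemmas can be read in parallel.

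First I would handle $n_1\ge 4$. Here the strings of opens of $\H_3$ are $\OO_1',\OO_2,\ldots,\OO_s$ with $\OO_1'$ having $n_1-3$ open vertices, so $s(\H_3)=s(\H)$ and the only summand that changes is $\lfloor(n_1-1)/3\rfloor$, which drops to $\lfloor(n_1-4)/3\rfloor=\lfloor(n_1-1)/3\rfloor-1$. Hence $b(\H_3)=b(\H)-1$. Next, $2\le n_1\le 3$: then $\{4\}\in\H$ (the string of opens ends at vertex $2$ or $3$), and after removing $1,2,3$ the strings of opens of $\H_3$ are exactly $\OO_2,\ldots,\OO_s$; thus $s$ drops by $1$ and the vanishing summand $\lfloor(n_1-1)/3\rfloor$ is $0$, giving $b(\H_3)=b(\H)-1$. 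Finally the case $n_1=1$: vertex $2$ is isolated and $\{3\}\in\H$. If $\{4\}\in\H$, then as above $s$ drops by one, the summand for $\OO_1$ was $0$, and $b(\H_3)=b(\H)-1$. If vertex $4$ is open, let $n_2$ be the number of opens in the string $\OO_2$ containing $4$; removing $1,2,3$ turns $\OO_1$ and the first open vertex of $\OO_2$ into nothing, leaving strings of opens $\OO_2',\OO_3,\ldots,\OO_s$ where $\OO_2'$ has $n_2-1$ opens (and is empty, dropping out, when $n_2=1$). Here $s$ drops by exactly one in all sub-cases (note $\OO_1$ disappears; $\OO_2$ becomes $\OO_2'$ or disappears, but if it disappears because $n_2=1$ then it is replaced by nothing while $\OO_1$'s disappearance is the net $-1$ — I would write this carefully so the bookkeeping is transparent). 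Then $b(\H)-b(\H_3)=1+\bigl(\lfloor(n_1-1)/3\rfloor-0\bigr)+\bigl(\lfloor(n_2-1)/3\rfloor-\lfloor(n_2-2)/3\rfloor\bigr)=1+\bigl(\lfloor(n_2-1)/3\rfloor-\lfloor(n_2-2)/3\rfloor\bigr)$, which equals $2$ precisely when $n_2\equiv 1$ modulo $3$ and equals $1$ otherwise.

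To finish, I would note that $\OO_1$ (with $n_1=1$) together with $\OO_2$ forms a 1-1 configuration exactly when $n_2\equiv 1$ modulo $3$ — this is the defining condition recalled just before Lemma \ref{H3}. Combining this with the computation above, $b(\H_3)=b(\H)-2$ holds if and only if $n_1=1$ (i.e. $2$ is an isolated open vertex) and $n_2\equiv 1$ mod $3$ (i.e. $\OO_1$ is contained in a 1-1 configuration), and in every other case $b(\H_3)=b(\H)-1$. The statement follows.

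The main obstacle, as is typical for such counting lemmas, is not any single computation but keeping the bookkeeping of $s(\H)$ honest across the sub-cases of $n_1=1$: one must check that a string of opens "disappearing" versus "shrinking" is accounted for correctly, and in particular that when $\OO_2$ shrinks to the empty string the net change in $s$ is still $-1$ (the loss of $\OO_1$), not $-2$, because $\OO_2$'s disappearance is compensated by the fact that its former contribution $\lfloor(n_2-1)/3\rfloor=0$ when $n_2=1$. Once this is set up cleanly, the floor-function identity $\lfloor(n-1)/3\rfloor-\lfloor(n-2)/3\rfloor=1$ iff $n\equiv 1\pmod 3$ does the rest.
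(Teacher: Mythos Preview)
Your approach is the same as the paper's: a case analysis on $n_1$ (and then on $n_2$ when $n_1=1$), with the paper recording the cases $n_1\le 3$ in a small table rather than prose. The overall structure and conclusion are correct, but two bookkeeping slips should be fixed.

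First, in the case $2\le n_1\le 3$ you write ``then $\{4\}\in\H$''. This is false for $n_1=3$: there the open vertices of $\OO_1$ are $2,3,4$, so vertex $4$ is open in $\H$. What is true (and what you need) is that vertex $4$ becomes a closed \emph{endpoint} of $\H_3$, so the strings of opens of $\H_3$ are exactly $\OO_2,\ldots,\OO_s$; your conclusion $b(\H_3)=b(\H)-1$ then stands.

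Second, in the sub-case $n_1=1$, vertex $4$ open, you assert that ``$s$ drops by exactly one in all sub-cases''. That is not correct when $n_2=1$: both $\OO_1$ and $\OO_2$ disappear and $s(\H_3)=s(\H)-2$, while the floor sum is unchanged (both lost summands were $0$), so $b(\H_3)=b(\H)-2$ directly. Your displayed formula only rescues this case by evaluating $\lfloor(n_2-2)/3\rfloor=\lfloor -1/3\rfloor=-1$, i.e.\ by pretending $\OO_2'$ is a ``string with $0$ opens'' contributing $1+(-1)=0$ to $b$; that is a convenient convention but it contradicts your own claim about $s$. The paper avoids this by treating $n_1=n_2=1$ as its own row. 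Once you separate that row (or make the convention explicit), the identity $\lfloor(n_2-1)/3\rfloor-\lfloor(n_2-2)/3\rfloor=1$ iff $n_2\equiv 1\pmod 3$ handles $n_2\ge 2$ exactly as you say, and the lemma follows.
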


\begin{proof}
Let $\OO_1,\ldots,\OO_s$ be the strings of opens of $\H$ and let $n_1$ be the number of open vertices in the string $\OO_1$ containing the vertex $2$. If $n_1\geq 4$ then the strings of opens of $\H_3$ are $\OO_1',\OO_2,\ldots,\OO_s$, where $\OO_1'$ contains $n_1-3$ open vertices. Then, we have the equalities $s(\H)=s(\H_3)$ and
$$\sum_{i=1}^{s}\left\lfloor\frac{(n_i(\H_3)-1}{3}\right\rfloor= 
\left\lfloor\frac{(n_1(\H)-3)-1}{3}\right\rfloor+\sum_{i=2}^{s}\left\lfloor\frac{(n_i(\H)-1}{3}\right\rfloor=\left( \sum_{i=1}^{s}\left\lfloor\frac{(n_i(\H)-1}{3}\right\rfloor\right) -1$$
from which the statement follows. 
If $n_1\leq 3$, the statement follows from Table \ref{T1} below.
\end{proof}
\begin{table}
$$\begin{array}{|l||c|c|c|}\hline
& s(\H_3) & \sum\limits_{i=1}^{s(\H_3)}\left\lfloor\frac{n_i(\H_3)-1}{3}\right\rfloor & b(\H_3) \\\hline\hline

\mbox{if } n_1>3 & s(\H) & \left(\sum\limits_{i=1}^{s(\H)}\left\lfloor\frac{n_i(\H)-1}{3}\right\rfloor \right)-1 & b(\H)-1 \\\hline

\mbox{if either }2\leq n_1\leq 3, & & & \\
\mbox{or } n_1=1\;\mathrm{ and}\; n_2\not\equiv 1\; {\mathrm mod}\; 3, & s(\H)-1 & \sum\limits_{i=1}^{s(\H)}\left\lfloor\frac{n_i(\H)-1}{3}\right\rfloor & b(\H)-1\\
\mbox{or }n_1=1\mbox{ and }\{3\}\in \H\mbox{ and } \{4\}\in \H & & & \\\hline

\mbox{if }\n_1=1, n_2\equiv 1\; {\mathrm mod}\; 3 \mbox{ and }n_2>1  & s(\H)-1 & \left(\sum\limits_{i=1}^{s(\H)}\left\lfloor\frac{n_i(\H)-1}{3}\right\rfloor\right) -1 & b(\H)-2 \\\hline

\mbox{if }n_1=n_2=1 & s(\H)-2 & \sum\limits_{i=1}^{s(\H)}\left\lfloor\frac{n_i(\H)-1}{3}\right\rfloor & b(\H)-2 \\\hline
\end{array}$$
\caption{}\label{T1}
\end{table}
We are now ready to prove Theorem \ref{string}.\\

\noindent
{\bf Proof of Theorem~\ref{string}} 
\noindent
Recall that $b(\H)=s(\H)+\sum_{i=1}^{s(\H)}\left\lfloor\frac{n_i(\H)-1}{3}\right\rfloor$.
Let $Exp(\H)=\mu(\H)-b(\H)+M(\H)$ be the expected formula for the projective dimension of $\H$. We prove the equality $\pd(\H)=Exp(\H)$ by induction on the number of vertices $\mu=\mu(\H)$.\\
If $\mu\leq 2$, then $\H$ is saturated, hence the statement follows by Proposition \ref{saturated}. We may then assume $\mu\geq 3$ and the statement is proved for any hypergraph having at most $\mu-1$ vertices.
If the vertex $2$, which is the neighbor of the endpoint $1$, is closed, then $Exp(\H)=Exp(\H_1)+1$ and, by Lemma \ref{red}, we have $\pd(\H)=\pd(\H_1)+1$. Since the induction hypothesis gives $\pd(\H_1)=Exp(\H_1)$, we immediately obtain $\pd(\H)=Exp(\H)$. 

We may then assume the vertex $2$ is open. By Proposition \ref{red3}, we have $\pd(\H)=\pd(\H_3)+2$, and by induction hypothesis $Exp(\H_3)=\pd(\H_3)$. Hence, to prove the statement it suffices to show that $Exp(\H)=Exp(\H_3)+2$. Since $\mu(\H)=\mu(\H_3)+3$, this is equivalent to  prove $b(\H_3)-M(\H_3)=b(\H)-M(\H)-1.$
 This follows by Lemmas \ref{H3} and \ref{b}.
\QED

\section{Projective Dimension of Cycles}

In this section we prove an analogous formula for the projective dimension of cycles. A cycle is obtained by identifying the two endpoints of a string.
\begin{Definition}\label{Defcycle}
Fix $\mu\geq 3$. A hypergraph $\H$ is a $\mu$-{\em cycle} if $\H$ can be written as $\H=\widetilde{\H}\cup \{\mu,1\}$, where $\widetilde{\H}$ is a string over $V=[\mu]$.
\end{Definition} 
When the number of vertices is not relevant, we omit it and just say that $\H$ is a {\it cycle}.  Examples of cycles are $\H'=\{\{1,2\}, \{2,3\}, \{3,4\}, \{4,1\}\}$, and $\H''=\{\{1\}, \{1,2\}, \{2,3\}, \{3\}, \{3,4\}, \{4,1\}\}$.
In contrast with the string case, a cycle may have only open vertices, for instance $\H'$ above, or exactly one closed vertex. We will then need to isolate these two situations when defining a `string of opens' inside a cycle.
Write $\mu=\mu(\H)$.  
\begin{itemize}
\item If $\H$ contains at least two closed vertices, we set $s=s(\H)$ to be the number of strings of opens in $\H$ and $n_1(\H),\ldots,n_{s}(\H)$ to be the number of opens in each string of open;
\item if $\H$ contains at most one closed vertex, we set $s=s(\H)=1$ and $n_1(\H)=\mu-1$;
\item we set $b(\H)=s(\H)+\sum_{i=1}^{s(\H)}\left\lfloor\frac{n_i(\H)-1}{3}\right\rfloor$, in analogy with the string case. 
\end{itemize}
Finally, the definition of a {\em 2-special configuration} $\mathcal{S}$ in $\H$ is the same as the definition of a 2-special configuration in a string, with the exception that one allows the two extremal vertices of $\mathcal{S}$ to coincide (if this happens then the entire cycle is itself a 2-special configuration, the smallest example of which is $\H=\{\{1\}, \{1,2\}, \{2,3\}, \{3\}, \{3,4\}, \{4,1\} \}$). Two 2-special configurations in a cycle $\H$ are {\em disjoint} if they do not share any open vertex. The {\em modularity} $M(\H)$ of a cycle $\H$ is the maximal number of pairwise disjoint 2-special configurations in $\H$.

\begin{Example}\label{C1}
Let $\mathcal{H}=\{\{1\},\{1,2\},\{2,3\},\{3\},\{3,4\},\{4,5\},\{5,6\},\{6,7\},\{7,8\},\{8\},\{8,9\},$
$\{9\},\{9,10\},\{10\},\{10,11\},\{11,12\},\{12\},\{12,13\},\{13,14\},\{14,15\},\{15\},\{15,16\},\{16,1\}\}$, then $\mu(\H)=16$, $b(\H)=5+\left\lfloor\frac{4-1}{3}\right\rfloor=6$ and $M(\H)=2$. Figure \ref{cycle16} shows the hypergraph $\H$.
\begin{figure}[h] 

\begin{tikzpicture}[thick, scale=0.7]

\shade [shading=ball, ball color=black] (1,0) circle (.1) node [left] {$\mathcal{H}:$  } ; 
\draw  [shape=circle] (2,0) circle (.1) ;
\shade [shading=ball, ball color=black] (3,0) circle (.1) ;
\draw  [shape=circle] (4,0) circle (.1) ;
\draw  [shape=circle] (5,0) circle (.1) ;
\draw  [shape=circle] (6,0) circle (.1) ;
\draw  [shape=circle] (7,0) circle (.1) ;
\shade [shading=ball, ball color=black] (8,0) circle (.1) ;
\shade [shading=ball, ball color=black] (8,-1) circle (.1) ;
\shade [shading=ball, ball color=black] (7,-1) circle (.1) ;
\draw  [shape=circle] (6,-1) circle (.1) ;
\shade [shading=ball, ball color=black] (5,-1) circle (.1) ;
\draw  [shape=circle] (4,-1) circle (.1) ;
\draw  [shape=circle] (3,-1) circle (.1) ;
\shade [shading=ball, ball color=black] (2,-1) circle (.1) ;
\draw  [shape=circle] (1,-1) circle (.1) ;

\draw [line width=1.2pt  ] (1.1,0)--(1.9,0);
\draw [line width=1.2pt  ] (2.1,0)--(2.9,0);
\draw [line width=1.2pt  ] (3.1,0)--(3.9,0);
\draw [line width=1.2pt  ] (4.1,0)--(4.9,0);
\draw [line width=1.2pt  ] (5.1,0)--(5.9,0);
\draw [line width=1.2pt  ] (6.1,0)--(6.9,0);
\draw [line width=1.2pt  ] (7.1,0)--(7.9,0);
\draw [line width=1.2pt  ] (1.1,-1)--(1.9,-1);
\draw [line width=1.2pt  ] (2.1,-1)--(2.9,-1);
\draw [line width=1.2pt  ] (3.1,-1)--(3.9,-1);
\draw [line width=1.2pt  ] (4.1,-1)--(4.9,-1);
\draw [line width=1.2pt  ] (5.1,-1)--(5.9,-1);
\draw [line width=1.2pt  ] (6.1,-1)--(6.9,-1);
\draw [line width=1.2pt  ] (7.1,-1)--(7.9,-1);
\draw [line width=1.2pt  ] (1,-0.1)--(1,-0.9);
\draw [line width=1.2pt  ] (8,-0.1)--(8,-0.9);

\end{tikzpicture}

\caption{}\label{cycle16}
\end{figure}
\end{Example}

We now state the main result of this section, providing a closed formula for the projective dimension of any cycle. Remarkably, it is the same formula found for strings (see Theorem \ref{string}). 
\begin{Theorem}\label{cycles2}
If $\H$ is a cycle hypergraph, then $\pd(\H)=\mu(\H)-b(\H)+M(\H).$
\end{Theorem}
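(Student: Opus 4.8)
The plan is to reduce the cycle case to the string case (Theorem~\ref{string}) by ``cutting'' the cycle open at a strategically chosen vertex, and then carefully accounting for the discrepancy between the combinatorial invariants $\mu,b,M$ of the cycle and those of the resulting string. First I would dispose of small/degenerate cases: when $\mu$ is small, or when $\H$ has at most one closed vertex (so that $\H$ is a ``cycle of opens''), or when $\H$ is saturated; in each of these the formula can be checked directly, in the last case using Proposition~\ref{saturated} ($\pd(\H)=\mu$ iff $\H$ is saturated, and one verifies $b(\H)=s(\H)=\mu$, $M(\H)=0$). For the saturated and the ``few closed vertices'' cases one may also want the analogue of Lemma~\ref{red}/Proposition~\ref{eq} adapted to cycles, removing a closed vertex all of whose neighbors are closed.

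\medskip

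\noindent For the main case, I would pick a closed vertex $v$ of $\H$ adjacent to an open vertex (such a $v$ exists once $\H$ is not saturated and has at least two closed vertices). Writing $\H$ as $\widetilde{\H}\cup\{\mu,1\}$ with the cut made so that $v$ becomes an endpoint, one obtains a string $\H^{\mathrm s}$ on the same $\mu$ vertices, but with $v$ ``split'' and/or an extra closed vertex introduced at the new endpoint. The key analytic step is a short exact sequence in the spirit of Lemma~\ref{ref}: using the variable $a$ with $a\mid m_v$ and $a\nmid m_i$ for $i\neq v$ (legitimate since $\{v\}\in\H$ and $v$ is the image of a closed vertex), one gets
$$0\longrightarrow R/(I(\H):m_v)[-\deg m_v]\stackrel{\cdot m_v}{\longrightarrow} R/I(\H)\setminus v \longrightarrow R/I(\H)\longrightarrow 0,$$
where $I(\H)\setminus v$ denotes the ideal on the remaining vertices; here $I(\H)\setminus v$ is (essentially) the ideal of a string, and $I(\H):m_v$ is the ideal of a shorter string (a path) together with possibly some isolated vertices. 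As in Lemma~\ref{ref}, the mapping cone is minimal because of the bi-grading argument, so $\pd(\H)=\max\{\pd(\text{string obtained by deleting }v),\ \pd(\Q_v)+1\}$, where $\Q_v$ is the hypergraph of $I(\H):m_v$. Then I would apply Theorem~\ref{string} to both hypergraphs on the right-hand side, exactly paralleling the recursive proofs of Lemma~\ref{red} and Proposition~\ref{red3}.

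\medskip

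\noindent The heart of the argument — and what I expect to be the main obstacle — is the bookkeeping: one must show that the two candidate values on the right, rewritten via $Exp(\cdot)=\mu-b+M$, both equal $\mu(\H)-b(\H)+M(\H)$ (or that the larger one does and dominates). This requires analogues of Lemmas~\ref{H3} and~\ref{b} comparing $b$ and $M$ of the cycle with those of the cut-open string: the subtlety is that cutting at $v$ may merge or split strings of opens, may create or destroy a closed endpoint, and — crucially — the definition of $2$-special configuration in a cycle permits the two extremal vertices to coincide (the whole cycle being $2$-special), a phenomenon with no string analogue. So I would organize the count by cases according to the residues $n_i\bmod 3$ of the strings of opens adjacent to $v$ and the parity/structure of $2$-special configurations through $v$, building a table analogous to Table~\ref{T1}. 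An alternative, possibly cleaner, route to handle the ``wrap-around'' $2$-special configurations is to do a second cut (or to run the string recursion Remark~\ref{proc} directly on the cycle, peeling vertices from a fixed closed vertex until the cycle degenerates into a string of strictly fewer vertices, and induct on $\mu$), so that the global modularity contribution $M(\H)$ is recovered as one extra unit exactly when a ``cyclic'' $2$-special configuration is present. Either way, once the combinatorial identity $b(\H^{\mathrm s})-M(\H^{\mathrm s})=b(\H)-M(\H)$ (up to the expected $\pm 1$ shifts matching the $\pd$ recursion) is established, Theorem~\ref{string} and induction on $\mu$ close the proof. \QED
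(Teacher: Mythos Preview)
Your skeleton is right --- one does ultimately reduce to strings via Lemma~\ref{ref}, and in fact the paper's Lemma~\ref{cycles1} is exactly your short-exact-sequence step (with \emph{both} neighbors of $v_1$ open, so that $\Q_{v_1}$ is a shorter string plus two isolated vertices and $\pd(\Q_{v_1})+1=\pd(\s_5)+3$). But you are missing the two reductions that make the modularity bookkeeping feasible, and without them the ``table analogous to Table~\ref{T1}'' you anticipate is not a finite table at all.

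The paper first disposes of the case of two adjacent closed vertices (Lemma~\ref{cons}): if $i$ and $i+1$ are both closed, removing the edge $\{i,i+1\}$ yields a string $\H'$ with $\mu(\H')=\mu(\H)$, $b(\H')=b(\H)$, $M(\H')=M(\H)$, and $\pd(\H')=\pd(\H)$ by Proposition~\ref{eq}; so one may assume no two closed vertices are adjacent. The second and decisive reduction is Proposition~\ref{reduc}: inserting or deleting three consecutive open vertices changes $\pd$ by exactly $2$ (and, trivially, $Exp$ by exactly $2$, Remark~\ref{exp}). This lets one assume every string of opens has at most two open vertices. In that regime modularity collapses to the elementary count $M(\H)=\lfloor Is(\H)/2\rfloor$ of Lemma~\ref{modul}, and the comparison of $Exp(\s_1)$, $Exp(\s_5)+3$ with $Exp(\H)$ becomes a short finite case check on $(n_1,n_2,n_3)\in\{1,2\}^3$, in each of which one verifies that \emph{both} candidate values equal $Exp(\H)$. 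Your proposal tries instead to control $M(\H)-M(\H^{\mathrm s})$ in full generality; this is genuinely hard, because $2$-special configurations can have arbitrary length and wrap around the cycle in ways that a single cut does not localize. Example~\ref{max} already shows that neither $\pd(\s_1)$ nor $\pd(\s_5)+3$ dominates uniformly, so the ``up to $\pm1$'' identity you hope for cannot be read off from local data at $v$ alone.

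A small side error: for a saturated cycle there are no strings of opens, so $s(\H)=0$, $b(\H)=0$, $M(\H)=0$ and $Exp(\H)=\mu$ --- not $b(\H)=\mu$ as you wrote.
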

For instance, the cycle $\H$ of Example \ref{C1} has $\pd(\H)=16-6+2=12$.
For a cycle $\H$ we denote the expected formula for the projective dimension by $Exp(\H)=\mu(\H)-b(\H)+M(\H)$. Hence, to prove Theorem \ref{cycles2}, we need to show that $\pd(\H)=Exp(\H)$ for every cycle.

 In \cite{J} it was proved that, if a cycle $\H$ only contains open vertices, then $\pd(\H)=\lfloor\frac{\mu}{3}\rfloor+\lceil\frac{\mu}{3}\rceil$.
Since one has $\lfloor\frac{\mu}{3}\rfloor+\lceil\frac{\mu}{3}\rceil=\mu-1-\lfloor\frac{\mu-2}{3}\rfloor=Exp(\H)$, we obtain the following.

\begin{Proposition}\label{open}
If $\H$ is a cycle having only open vertices, then $\pd(\H)=Exp(\H)$.
\end{Proposition}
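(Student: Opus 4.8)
\textbf{Proposal for the proof of Proposition~\ref{open}.}

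The statement is essentially a translation of a known combinatorial formula into the bookkeeping introduced in this section, so the plan is purely a matter of unwinding definitions. First I would recall the result of \cite{J}: if $\H$ is a cycle all of whose $\mu$ vertices are open, then $\pd(\H)=\lfloor\frac{\mu}{3}\rfloor+\lceil\frac{\mu}{3}\rceil$. The only thing to verify is that the right-hand side equals $Exp(\H)=\mu(\H)-b(\H)+M(\H)$ for such a cycle. Since $\H$ has no closed vertex (in particular at most one closed vertex), the conventions set just before the statement of Theorem~\ref{cycles2} apply: we put $s(\H)=1$ and $n_1(\H)=\mu-1$, so that $b(\H)=s(\H)+\lfloor\frac{n_1(\H)-1}{3}\rfloor=1+\lfloor\frac{\mu-2}{3}\rfloor$.

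Next I would pin down $M(\H)$. A 2-special configuration needs at least two strings of opens, or else the whole cycle is itself a 2-special configuration; since a cycle with only open vertices contains no closed vertex at all, any honest 2-special configuration inside $\H$ must be the entire cycle, which is allowed precisely when the two extremal vertices coincide. But a $\mu$-cycle with all vertices open, viewed as a ``degenerate'' 2-special configuration, would have to satisfy the residue conditions $n_1\equiv n_s\equiv 1$ and $n_i\equiv 2$ for the intermediate strings; with only one string of opens this forces $\mu-1\equiv 1$, i.e. $\mu\equiv 2\pmod 3$. Hence $M(\H)=1$ when $\mu\equiv 2\pmod 3$ and $M(\H)=0$ otherwise. (I would double-check this against the definition of ``disjoint'' and the fact that two copies of the whole cycle are never disjoint, so $M(\H)\le 1$ always.)

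Finally I would combine the pieces: $Exp(\H)=\mu-1-\lfloor\frac{\mu-2}{3}\rfloor+M(\H)$, and a short case analysis on $\mu\bmod 3$ shows this equals $\lfloor\frac{\mu}{3}\rfloor+\lceil\frac{\mu}{3}\rceil$ in all three cases — the extra $+1$ contributed by $M(\H)$ in the case $\mu\equiv 2\pmod 3$ is exactly what is needed to match $\lceil\frac{\mu}{3}\rceil=\lfloor\frac{\mu}{3}\rfloor+1$ there, while for $\mu\equiv 0,1\pmod 3$ one checks $\mu-1-\lfloor\frac{\mu-2}{3}\rfloor=\lfloor\frac{\mu}{3}\rfloor+\lceil\frac{\mu}{3}\rceil$ directly. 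The one genuinely delicate point, and the only place where a reader might object, is the computation of $M(\H)$: one must argue carefully from the modified definition of 2-special configuration in a cycle that there is no 2-special configuration strictly smaller than the whole cycle when all vertices are open, and that the whole cycle qualifies exactly under the stated congruence. Everything else is arithmetic with floors and ceilings.
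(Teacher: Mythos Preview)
Your overall plan --- quote Jacques' formula and match it against $Exp(\H)$ --- is exactly what the paper does, but your computation of $M(\H)$ is wrong, and this breaks the argument.

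A 2-special configuration is, by definition, a \emph{string} hypergraph with $s\ge 2$ strings of opens; in particular its two extremal vertices are closed. The cycle modification of the definition only allows those two \emph{closed} extremal vertices to coincide (the paper's smallest example has the single closed vertex $\{1\}$ playing the role of both endpoints). A cycle with only open vertices has no closed vertex whatsoever, hence it cannot contain (or be) any 2-special configuration. Therefore $M(\H)=0$ for every $\mu$, not $M(\H)=1$ when $\mu\equiv 2\pmod 3$.

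Your arithmetic check then also fails. The identity
\[
\mu-1-\left\lfloor\tfrac{\mu-2}{3}\right\rfloor=\left\lfloor\tfrac{\mu}{3}\right\rfloor+\left\lceil\tfrac{\mu}{3}\right\rceil
\]
holds for \emph{all} $\mu$, with no extra $+1$ needed in the case $\mu\equiv 2$. For instance, take $\mu=5$: the left side is $5-1-1=3$ and the right side is $1+2=3$. With your value $M(\H)=1$ you would get $Exp(\H)=4\neq 3=\pd(\H)$, so the two errors do not cancel. Once $M(\H)=0$ is established, the paper's one-line verification $\lfloor\frac{\mu}{3}\rfloor+\lceil\frac{\mu}{3}\rceil=\mu-1-\lfloor\frac{\mu-2}{3}\rfloor=Exp(\H)$ is all that is required.
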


Next, we prove the formula for small cycles.
\begin{Lemma}\label{3or4}
If $\H$ is a $\mu$-cycle hypergraph with $\mu\leq 4$, then $\pd(\H)=Exp(\H)$.
\end{Lemma}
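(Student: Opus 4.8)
\textbf{Proof plan for Lemma \ref{3or4}.}
The plan is to reduce to a finite check by first using Proposition \ref{open} to dispose of the cases in which $\H$ has no closed vertices, and then handling the remaining cycles with $\mu\le 4$ and at least one closed vertex. Since $\mu\ge 3$ by definition of a cycle, we are left with only finitely many combinatorial types: the $3$-cycles with $1$, $2$, or $3$ closed vertices, and the $4$-cycles with $1$, $2$, $3$, or $4$ closed vertices, up to the obvious rotational/reflective symmetry of a cycle. For each such type I would write down an explicit ideal $I(\H)$ in a concrete polynomial ring, following Notation \ref{not}, and compute $\pd(R/I(\H))$ directly, then check it against $Exp(\H)=\mu(\H)-b(\H)+M(\H)$.

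The key technical tools for the explicit computations are already available in the excerpt. When $\H$ has a closed vertex, say $\{1\}\in\H$, Lemma \ref{ref} gives $\pd(\H)=\max\{\pd(\H_1),\pd(\Q_1)+1\}$, and $\H_1$ is then a string on $\mu-1\le 3$ vertices whose projective dimension is computed via Remark \ref{proc} (or Theorem \ref{string}, which is available); $\Q_1=\H_1:v_1$ is likewise small and is handled the same way, often collapsing to a disjoint union of a vertex and a smaller string. If two or more vertices are closed, Proposition \ref{saturated} handles the saturated case ($\pd(\H)=\mu$), and Lemma \ref{red} applies whenever a closed vertex has only closed neighbors. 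For the low end, note that a $3$-cycle with exactly one closed vertex and a $4$-cycle with one or two closed vertices are the only genuinely ``cycle-specific'' cases; everything else reduces to strings. On the bookkeeping side, I would carefully compute $b(\H)$ and $M(\H)$ from the definitions: for $\mu\le 4$ the only $2$-special configuration possible in a cycle with a closed vertex is quite constrained, so $M(\H)$ is $0$ in most cases and equals $1$ only for the exceptional small cycles (e.g.\ $\H=\{\{1\},\{1,2\},\{2,3\},\{3\},\{3,4\},\{4,1\}\}$, where the whole cycle is a $2$-special configuration).

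Concretely, for the all-open $3$-cycle and $4$-cycle the formula is already verified by Proposition \ref{open}. For the $3$-cycle with one closed vertex, one computes that removing it via Lemma \ref{ref} leaves a string on two vertices (hence $\pd=2$) and a quotient $\Q_1$ that is a single open edge plus possibly an isolated vertex, and one checks the maximum equals $Exp(\H)$. For $4$-cycles with one or two closed vertices, the same reduction via Lemma \ref{ref} leaves strings on three vertices, whose projective dimensions are read off from Remark \ref{proc}; the cases with three or four closed vertices are either saturated (Proposition \ref{saturated}) or handled by Lemma \ref{red}. In each case one finishes by matching the computed value against $\mu(\H)-b(\H)+M(\H)$.

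The main obstacle I anticipate is not depth but breadth: one must be careful and systematic in enumerating all symmetry classes of small cycles and, for each, in computing $b(\H)$ and especially $M(\H)$ correctly, since the definition of a $2$-special configuration inside a cycle (allowing the two extremal vertices to coincide) introduces a subtlety absent in the string case. A clean way to organize the proof is a short table listing, for each type, the values of $\mu(\H)$, $s(\H)$, the $n_i(\H)$, $b(\H)$, $M(\H)$, the computed $\pd(\H)$, and $Exp(\H)$, then observing the last two columns agree.
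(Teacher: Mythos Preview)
Your case-by-case strategy is correct and will work, but the paper takes a much shorter route. Rather than enumerating combinatorial types and reducing each to strings via Lemma~\ref{ref} or Lemma~\ref{red}, the paper disposes of the saturated case by Proposition~\ref{saturated} and then, for any non-saturated $\mu$-cycle $\H$ with $\mu\le 4$, uses a sandwich argument: iterated applications of Corollary~\ref{compare} give $\pd(\H_0)\le \pd(\H)$, where $\H_0$ is the all-open $\mu$-cycle, and Proposition~\ref{saturated} gives $\pd(\H)<\mu$. Since Proposition~\ref{open} yields $\pd(\H_0)=\mu-1$, this squeezes $\pd(\H)=\mu-1$, and one then checks directly that $Exp(\H)=\mu-1$ for every non-saturated $\mu$-cycle with $\mu\le 4$.

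The advantage of the paper's argument is that it avoids the case analysis entirely and never requires computing $\Q_1$ or invoking the string formula; your approach, while valid, trades this elegance for a longer enumeration and requires more careful bookkeeping, particularly around $M(\H)$ for the $4$-cycle with two non-adjacent closed vertices. Incidentally, your description of $\Q_1$ in the one-closed-vertex $3$-cycle is slightly off (it collapses to a single generator, not ``a single open edge plus possibly an isolated vertex''), though the resulting numbers still come out right.
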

\begin{proof}
If $\H$ is saturated, then $Exp(\H)=\mu$ and, by Proposition \ref{saturated}, also $\pd(\H)=\mu$.

We may then assume $\H$ is not saturated.
Let $\H_0$ be the $\mu$-cycle hypergraph whose vertices are all open, and let $\overline{\H}$ be the $\mu$-cycle hypergraph whose vertices are all closed. By Proposition \ref{open} we have $\pd(\H_0)=\mu-1$, and by iterated use of Corollary \ref{compare} we haves $\pd(\H_0)\leq \pd(\H)$. Finally, Proposition \ref{saturated} gives $\pd(\H)<\mu=\pd(\overline{\H})$, yielding
$$\mu-1=\pd(\H_0) \leq \pd(\H)<\mu.$$
Then we have $\pd(\H)=\mu-1$, and it is easily checked that also $Exp(\H)=\mu-1$.
\end{proof}

By Proposition \ref{open} and Lemma \ref{3or4} we only need to prove Theorem \ref{cycles2} for cycles containing at least $5$ vertices, at least one of which is closed.

Next, we prove the formula when $\H$ contains at least two adjacent closed vertices.
\begin{Lemma}\label{cons}
Let $\H$ be a cycle. If $\H$ contains two adjacent closed vertices, then $\pd(\H)=Exp(\H)$.
\end{Lemma}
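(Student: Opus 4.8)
The statement I want to prove is: if a cycle hypergraph $\H$ contains two adjacent closed vertices, then $\pd(\H)=Exp(\H)$. The natural idea is to reduce the cycle to a string: if $\{i\}$ and $\{i+1\}$ are both closed, then relabeling so that $i=1$, I can use Lemma \ref{ref} applied to the closed vertex $1$, obtaining $\pd(\H)=\max\{\pd(\H_1),\pd(\Q_1)+1\}$. Since $2$ is also a closed vertex of $\H$ and is a neighbor of $1$, and in fact all neighbors of $1$ in $\H$ are the vertices $2$ and $\mu$ --- one of which is closed --- I would first check whether Lemma \ref{red} applies, i.e. whether \emph{both} neighbors of $1$ are closed. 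If the vertex $\mu$ is also closed, Lemma \ref{red} gives immediately $\pd(\H)=\pd(\H_1)+1$, and $\H_1$ is a string, so Theorem \ref{string} applies; I would then just have to check the bookkeeping $Exp(\H)=Exp(\H_1)+1$ using the definitions of $b$ and $M$ for cycles versus strings. The more delicate case is when $\mu$ is open, so only one neighbor of $1$ is closed.

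In that remaining case ($\{1\},\{2\}\in\H$, with $2$ a closed neighbor of $1$ but $\mu$ open), I still have $\pd(\H)=\max\{\pd(\H_1),\pd(\Q_1)+1\}$ from Lemma \ref{ref}. Now $\H_1$ is a string (remove the vertex $1$ from the cycle), and $\Q_1=\H_1:v_1$. Because $2$ is closed, I expect $\Q_1$ to be closely related to $\H_1$ with the faces near $2$ and $\mu$ modified. The cleanest route is to compute $\Q_1$ explicitly: since the only faces of $\H_1$ meeting vertex $\mu$ or vertex $2$ that also involved vertex $1$ are $\{1,2\}$ and $\{\mu,1\}$, colon-ing by $m_1$ removes the variable associated to $\{1,2\}$ from $m_2$ and the variable associated to $\{\mu,1\}$ from $m_\mu$; since $\{2\}$ is already a face, $m_2$ keeps a private variable, so vertex $2$ stays closed, and vertex $\mu$ may become closed or get absorbed. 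I would then identify $\Q_1$ as a string hypergraph (possibly with an isolated extra vertex or with $\mu$ turned closed) and apply Theorem \ref{string} to both $\H_1$ and $\Q_1$, reducing everything to a finite combinatorial comparison of $\mu$, $b$, $M$ between $\H$, $\H_1$, $\Q_1$.

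\textbf{Alternative, cleaner approach.} Rather than chasing the colon ideal, I would prefer to invoke Lemma \ref{red} after a preliminary step: if the two adjacent closed vertices are $1$ and $2$, apply Lemma \ref{ref} to vertex $1$; the point is that $\Q_1\subseteq\H_1$ and, since the only neighbor-issue is at vertex $2$ (which is closed in $\H_1$) and vertex $\mu$, the faces of $\H_1$ not appearing in $\Q_1$ all have the property that vertex $2$ (closed) lies in them, so by Proposition \ref{eq} applied repeatedly I can hope to conclude $\pd(\Q_1)=\pd(\H_1)$, exactly as in the proof of Lemma \ref{red} --- the subtlety being that in Lemma \ref{red} one assumed \emph{all} neighbors of $1$ are closed, whereas here only one is. I would check whether the faces of $\H_1$ dropped in passing to $\Q_1$ are precisely those containing the closed vertex $2$; if vertex $\mu$ is open this requires care, because the face $\{\mu,1\}$ contributes, upon colon, possibly nothing (if $\{\mu-1,\mu\}$ and the $\{\mu\}$-status already pin $m_\mu$ down) --- so in the worst case $\Q_1$ is $\H_1$ with vertex $\mu$ made closed, i.e. $\Q_1=\overline{(\H_1)}$ in the notation of Corollary \ref{compare}, and then $\pd(\Q_1)+1=\pd(\H_1\cup\{\mu\})+1$. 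Either way, once $\H_1$ and $\Q_1$ are both strings, Theorem \ref{string}, Lemma \ref{loc}, Corollary \ref{compare}, Lemma \ref{red2}, and a modularity computation finish the argument.

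\textbf{Main obstacle.} The hard part will be the combinatorial bookkeeping: correctly identifying $\Q_1$ (in particular what happens at the ``far'' vertex $\mu$ when it is open), and then verifying that the cycle-invariants $b(\H),M(\H)$ satisfy exactly $Exp(\H)=Exp(\H_1)+1$ (respectively $Exp(\H)=\max\{Exp(\H_1),Exp(\Q_1)+1\}$), matching the recursion for $\pd$. The definitions of $b$ and $M$ for cycles differ from those for strings precisely in how strings of opens wrapping around the closed vertices are counted, so I expect to split into subcases according to whether $\mu$ is closed, whether removing vertex $1$ merges two strings of opens of $\H$ into one string of opens of $\H_1$ (it cannot, since $2$ is closed, so the string of opens adjacent to $1$ is unchanged), and whether a $1$-$1$ or other $2$-special configuration is created or destroyed. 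This is routine but must be done carefully; I would organize it with a short case analysis and, if needed, a small table analogous to Table \ref{T1}.
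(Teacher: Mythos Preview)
Your approach is viable in principle but takes a substantially longer route than the paper, and contains a couple of concrete slips that would need fixing before it goes through.

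\medskip

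\textbf{The paper's argument.} Rather than removing a \emph{vertex}, the paper removes the \emph{edge} $E=\{i,i+1\}$ between the two adjacent closed vertices. The resulting hypergraph $\H'=\H\setminus E$ is a string on the \emph{same} vertex set $[\mu]$. Since both $i$ and $i+1$ are closed in $\H'$, Proposition~\ref{eq} applies directly and gives $\pd(\H)=\pd(\H')$. Moreover, deleting an edge between two closed vertices leaves every string of opens untouched, so $\mu(\H')=\mu(\H)$, $b(\H')=b(\H)$, and $M(\H')=M(\H)$ trivially; hence $\pd(\H')=Exp(\H)$ by Theorem~\ref{string}. The whole proof is three lines with no case analysis.

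\medskip

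\textbf{What your approach costs.} By removing the vertex $1$ instead, you change $\mu$ and must track how $b$ and $M$ transform, splitting into cases according to whether the far neighbor $\mu$ is open or closed. This can be made to work, but it essentially re-derives a special case of Proposition~\ref{eq} through Lemma~\ref{ref}, Proposition~\ref{red3}, and Lemma~\ref{red2}, plus a bookkeeping check that $Exp(\H)=\max\{Exp(\H_1),Exp(\Q_1)+1\}$.

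\medskip

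\textbf{Two specific errors to fix if you pursue your route.} First, your description of $\Q_1$ when $\mu$ is open is backwards. In $\H_1$ the endpoint $\mu$ is \emph{already} closed (its private variable comes from the edge $\{\mu,1\}$); passing to $\Q_1$ divides that variable out, so $m_\mu'$ becomes the single variable for $\{\mu-1,\mu\}$, which \emph{divides} $m_{\mu-1}$. Thus $m_{\mu-1}$ (not $m_\mu'$) is redundant, and $\Q_1$ is the disjoint union of the string on $\{2,\ldots,\mu-2\}$ and an isolated closed vertex --- not ``$\H_1$ with $\mu$ made closed''. Second, your claim that the faces of $\H_1$ dropped in passing to $\Q_1$ all contain the closed vertex $2$ is incorrect: the face actually lost is $\{\mu\}$, so Proposition~\ref{eq} does not apply in the direction you hoped. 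Once you correct $\Q_1$, the comparison $\pd(\H_1)\leq \pd(\Q_1)+1$ follows from Lemma~\ref{red2}(ii) applied from the $\mu$-end of $\H_1$, and the argument closes --- but the edge-removal trick avoids all of this.
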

\begin{proof}
Let $i$ and $i+1$ be two adjacent closed vertices of $\H$, and let $E_i=\{i,i+1\}$ be the face of $\H$ containing both of them. Then $\H'=\H\setminus E_i$ is a string, and clearly $\mu(\H')=\mu(\H)$. Moreover, since we removed from $\H$ an edge connecting two closed vertices, we have $b(\H)=b(\H')$ and $M(\H)=M(\H')$. These equalities, together with Theorem \ref{string}, give
$$\pd(\H')=\mu(\H')-b(\H')+M(\H')=\mu(\H)-b(\H)+M(\H)=Exp(\H).$$
Finally, by Proposition \ref{eq}, we have $\pd(\H)=\pd(\H')$, which concludes the proof.
\end{proof}

 The following result essentially reduces the remaining portion of the problem to the string case.
For any collection of vertices $V'=\{v_1,\ldots,v_r\}$ of the hypergraph $\H$, let $\H_{V'}$ be the hypergraph whose faces are obtained from the faces of $\H$ as follows: for any face $F$ of $\H$, if $F$ does not contain any vertex of $V'$, then $F$ is also a face of $H_{V'}$; if $F$ contains the vertices $v_{j_1},\ldots,v_{j_s}$ of $V'$, then the face $F'=F\setminus\{v_{j_1},\ldots,v_{j_s}\}$ is a face of $\H_{V'}$. If $I=(m_i\,|\,i\in [\mu])$ is an ideal associated to $\H$, then $I'=(m_i\,|\,i\in [\mu]\setminus V')$ is an ideal associated to $H_{V'}$.
\begin{Lemma}\label{cycles1}
Let $\H$ be a $\mu$-cycle hypergraph for some $\mu\geq 5$. Assume $\{v_1\}\in \H$ for some $v_1$.  Let $v_2$ and $v_3$ be the neighbors of $v_1$, and $v_4$ and $v_5$ be the other neighbor of $v_2$ and $v_3$, respectively. Set $\s_1=H_{\{v_1\}}$ and $\s_5=\H_{\{v_1,v_2,v_3,v_4,v_5\}}$. Then $\pd(\H)={\rm max}\{\pd(\s_1),\pd(\s_5)+3\}$.
\end{Lemma}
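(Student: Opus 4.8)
The plan is to mimic the proof of Lemma~\ref{ref} but peel off three closed-vertex layers at once, using that $v_1$ is closed together with the structural fact that in a $\mu$-cycle with $\mu\geq 5$ the vertices $v_1,v_2,v_3,v_4,v_5$ are pairwise distinct (here $\mu\geq 5$ guarantees $v_4\neq v_5$ and both differ from $v_1$). First I would apply Lemma~\ref{ref} to the closed vertex $v_1$: relabelling so that $v_1$ plays the role of ``$1$'', we get $\pd(\H)=\max\{\pd(\H_{v_1}),\pd(\Q)+1\}$ where $\Q=\H_{v_1}\!:\!v_1$ is the hypergraph of $J=(I_{v_1}:m_{v_1})$. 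Note $\s_1=\H_{v_1}$, so the first term already matches. The work is to show $\pd(\Q)+1=\pd(\s_5)+3$, i.e.\ $\pd(\Q)=\pd(\s_5)+2$.

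To analyze $\Q$, I would describe it explicitly. Removing $v_1$ from the cycle leaves a string with endpoints $v_2$ and $v_3$ (both now endpoints, hence closed in $\H_{v_1}$, and indeed $v_2,v_3$ were already neighbors of the closed $v_1$). Colon-ing by $m_{v_1}$: the only faces of $\H_{v_1}$ on which $m_{v_1}$ is supported are the edges $\{v_1,v_2\}$ and $\{v_1,v_3\}$ of the original cycle, whose variables appear in $m_{v_2}$ and $m_{v_3}$ respectively. Thus in $\Q$ the generators corresponding to $v_2$ and $v_3$ get those variables removed; concretely $\Q$ is obtained from the string $\H_{v_1}$ by deleting vertices $v_2$ and $v_3$ and then dealing with the leftover structure around $v_4$ and $v_5$. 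I expect that, exactly as in the proof of Proposition~\ref{red3}, $\Q$ turns out to be the disjoint union of $\s_5=\H_{\{v_1,\dots,v_5\}}$ together with one or two isolated vertices (coming from $v_4$, $v_5$ once their incident edges to $v_2$, $v_3$ are killed) — or more precisely a disjoint union whose projective dimension is $\pd(\s_5)+2$. This is where I would need to check cases according to whether $v_4$ (resp.\ $v_5$) is open or closed, and whether $v_4=v_5$ can occur (it cannot when $\mu\geq 5$ — this is the role of that hypothesis). Using Lemma~\ref{red} / Proposition~\ref{red3}-style reasoning, each of $v_4,v_5$ contributes exactly one to the projective dimension: removing them (as endpoints of the remaining pieces) drops $\pd$ by $1$ each, and the remaining hypergraph is $\s_5$. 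Hence $\pd(\Q)=\pd(\s_5)+2$.

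Finally I would assemble: by Lemma~\ref{ref},
$$\pd(\H)=\max\{\pd(\s_1),\,\pd(\Q)+1\}=\max\{\pd(\s_1),\,\pd(\s_5)+3\},$$
which is the claim.

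\textbf{Main obstacle.} The delicate point is the precise identification of $\Q=\H_{v_1}\!:\!v_1$ and the bookkeeping showing $\pd(\Q)=\pd(\s_5)+2$. One must carefully track which edges of the cycle are ``charged'' to $m_{v_1}$, handle the interaction at $v_4$ and $v_5$ (open vs.\ closed, and the fact that after deleting $v_2,v_3$ these vertices may become isolated or become new endpoints of a shorter string), and invoke $\mu\geq 5$ to rule out degenerate coincidences among $v_1,\dots,v_5$. I expect this to require a short case analysis (perhaps two or three cases), each resolved by the already-established Lemmas~\ref{ref}, \ref{red}, and Proposition~\ref{red3}, but no genuinely new idea beyond a careful combinatorial description of the colon hypergraph.
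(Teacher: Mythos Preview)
Your overall strategy is exactly the paper's: apply Lemma~\ref{ref} at the closed vertex $v_1$ to get $\pd(\H)=\max\{\pd(\s_1),\pd(\Q_1)+1\}$ with $\Q_1=\s_1:v_1$, and then show $\pd(\Q_1)=\pd(\s_5)+2$.

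Your description of $\Q_1$, however, is muddled, and your case analysis is aimed at the wrong vertices. The relevant hypothesis is that $v_2$ and $v_3$ are \emph{open} (the paper's proof asserts this outright---``Since $v_2$ and $v_3$ are open vertices''---and every use of the lemma in the paper occurs after Lemma~\ref{cons} has disposed of adjacent closed vertices). Under that hypothesis the computation is immediate, not a case split: since $v_2$ is open, the colon-reduced generator $m_{v_2}'$ equals the single edge-variable for $\{v_2,v_4\}$, which \emph{divides} $m_{v_4}$, so $m_{v_4}$ becomes redundant; likewise $m_{v_5}$ is redundant. Hence $\Q_1$ is the disjoint union of $\s_5$ with two isolated closed vertices (these are $v_2$ and $v_3$, not $v_4$ and $v_5$ as you wrote), and $\pd(\Q_1)=\pd(\s_5)+2$ follows at once, just as in Proposition~\ref{red3} but applied at both ends simultaneously. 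No further appeal to Lemma~\ref{red} or a case analysis is needed.

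Your proposed split on whether $v_4,v_5$ are open or closed is a red herring: that has no effect on whether $m_{v_2}'\mid m_{v_4}$. Worse, without the openness of $v_2,v_3$ the lemma is actually false (e.g.\ the saturated $5$-cycle has $\pd(\H)=5$ but $\max\{\pd(\s_1),\pd(\s_5)+3\}=\max\{4,3\}=4$), so no case analysis on $v_4,v_5$ can rescue the general statement. The fix is simply to recognize that $v_2,v_3$ open is the operative assumption, after which the argument is a single sentence.
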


\begin{proof}
We set $\Q_1=\s_1:v_1$. By Lemma \ref{ref}, we have $\pd(\H)={\rm max}\{\pd(\s_1),\pd(\Q_1)+1 \}.$
Since $v_2$ and $v_3$ are open vertices, then $\Q_1$ is the disjoint union of $\s_5$ and two vertices, whence $\pd(\Q_1)=\pd(\s_5)+2$. Then, $\pd(\H)={\rm max}\{\pd(\s_1),\pd(\s_5)+3 \}.$
\end{proof}

 One may hope that, in Lemma \ref{cycles1}, either $\pd(\s_1)$ or $\pd(\s_5)+3$ is always larger than the other number, so that one could either have $\pd(\H)=\pd(\s_1)$ or $\pd(\H)=\pd(\s_5)+3$. The following examples show that this is not possible, in general.

\begin{Example}\label{max}
Let $\H$ be the hypergraph $H=\{ \{1\}, \{1,2\}, \{2,3\}, \{3\}, \{3,4\}, \{4,5\}, \{5,6\}, \{6,7\},$
$ \{7\}, \{7,8\}, \{8,1\}\}$, and let $v_1$ be the vertex $1$. Then one has $\pd(\s_1)=6$ and $\pd(\s_5)=2$, giving $${\rm max}\{\pd(\s_1),\pd(\s_5)+3\}=\pd(\s_1)>\pd(\s_5)+3.$$

\noindent Now, let $\H'$ be the hypergraph $\H'=\{\{1\}, \{1,2\}, \{2,3\}, \{3,4\}, \{4,5\}, \{5,6\}, \{6,7\}, \{7,8\}, \{8,1\}\}$, and let $v_1$ be the vertex $1$.  Then one has $${\rm max}\{\pd(\s'_1),\pd(\s'_5)+3\}=6=\pd(\s'_5)+3>\pd(\s'_1)=5.$$

The hypergraphs of Example \ref{max} are shown in Figure \ref{C1C5}. 
\begin{figure}[h] 

\begin{tikzpicture}[thick, scale=0.7]

\draw  [line width=0.000001pt] (1.4,0.55)--(1.40000001,0.55) node [left] {$v_1$ }  ;
\shade [shading=ball, ball color=black] (1,0) circle (.1) node [left] {$\mathcal{H}:$ }  ; 
\draw  [shape=circle] (2,0) circle (.1) ;
\shade [shading=ball, ball color=black] (3,0) circle (.1) ;
\draw  [shape=circle] (4,0) circle (.1) ;
\draw  [shape=circle] (4,-1) circle (.1) ;
\draw  [shape=circle] (3,-1) circle (.1) ;
\shade [shading=ball, ball color=black] (2,-1) circle (.1) ;
\draw  [shape=circle] (1,-1) circle (.1) ;

\draw [line width=1.2pt  ] (1.1,0)--(1.9,0);
\draw [line width=1.2pt  ] (2.1,0)--(2.9,0);
\draw [line width=1.2pt  ] (3.1,0)--(3.9,0);
\draw [line width=1.2pt  ] (1.1,-1)--(1.9,-1);
\draw [line width=1.2pt  ] (2.1,-1)--(2.9,-1);
\draw [line width=1.2pt  ] (3.1,-1)--(3.9,-1);
\draw [line width=1.2pt  ] (1,-0.1)--(1,-0.9);
\draw [line width=1.2pt  ] (4,-0.1)--(4,-0.9);

\draw  [line width=0.000001pt] (6.4,0.55)--(6.40000001,0.55) node [left] {$v_1$ }  ;
\shade [shading=ball, ball color=black] (6,0) circle (.1) node [left] {$\mathcal{H}':$  } ; 
\draw  [shape=circle] (7,0) circle (.1) ;
\draw  [shape=circle] (8,0) circle (.1) ;
\draw  [shape=circle] (9,0) circle (.1) ;
\shade [shading=ball, ball color=black] (9,-1) circle (.1) ;
\draw  [shape=circle] (8,-1) circle (.1) ;
\draw  [shape=circle] (7,-1) circle (.1) ;
\draw  [shape=circle] (6,-1) circle (.1) ;

\draw [line width=1.2pt  ] (6.1,0)--(6.9,0);
\draw [line width=1.2pt  ] (7.1,0)--(7.9,0);
\draw [line width=1.2pt  ] (8.1,0)--(8.9,0);
\draw [line width=1.2pt  ] (6.1,-1)--(6.9,-1);
\draw [line width=1.2pt  ] (7.1,-1)--(7.9,-1);
\draw [line width=1.2pt  ] (8.1,-1)--(8.9,-1);
\draw [line width=1.2pt  ] (6,-0.1)--(6,-0.9);
\draw [line width=1.2pt  ] (9,-0.1)--(9,-0.9);
\end{tikzpicture}

\caption{}\label{C1C5}
\end{figure}
\end{Example}
In contrast, the following Lemma -- that will be used in the proof of Proposition \ref{reduc} -- shows that if a closed vertex delimits a string of precisely 3 opens, then $\pd(\H)=\pd(\s_5)+3$. 
\begin{Lemma}\label{st}
In the setting of Lemma \ref{cycles1} further assume $\mu(\H)\geq 7$, and the string of opens containing $v_3$ has exactly 3 open vertices. Then $\pd(\H)=\pd(\s_5)+3$.
\end{Lemma}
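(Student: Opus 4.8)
\textbf{Proof proposal for Lemma~\ref{st}.}
The plan is to run the recursion of Lemma~\ref{cycles1} and show that, under the stated hypothesis, the branch $\pd(\s_5)+3$ always wins over the branch $\pd(\s_1)$. By Lemma~\ref{cycles1} we have $\pd(\H)=\mathrm{max}\{\pd(\s_1),\pd(\s_5)+3\}$, so it suffices to prove $\pd(\s_1)\leq\pd(\s_5)+3$. First I would unwind the combinatorics of the construction: since the string of opens containing $v_3$ has exactly three open vertices, $v_3,v_4$ are the first two of those opens, and $v_5$ is the third; and since $\mu(\H)\geq 7$ there is a closed vertex $w$ immediately past $v_5$ along the cycle (the vertex closing that string of opens from the other side — note $\{v_1\}$ and $w$ may coincide only if $\mu(\H)$ is small, but $\mu\geq 7$ forces them distinct or at least forces the relevant structure to be honest). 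The upshot is that $\s_5=\H_{\{v_1,v_2,v_3,v_4,v_5\}}$ is a string hypergraph in which the old closed vertex $w$ has become (or remained) an endpoint, and the key point is that $\s_1=\H_{\{v_1\}}$ is a string with exactly $5$ more vertices than $\s_5$, obtained by reattaching the segment $v_2,v_3,v_4,v_5$ (four opens in a row, since $v_2$ is open) capped by the closed vertex $w$ — i.e., $\s_1$ has, near that end, a closed endpoint followed by a string of opens of length $4\equiv 1$ mod $3$ preceding the first old closed vertex.

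Next I would apply the string recursion of Remark~\ref{proc} / Proposition~\ref{red3} to $\s_1$ directly. The endpoint of $\s_1$ that I want to peel is the old closed vertex $w$; its neighbor in $\s_1$ is $v_5$, which is open. Hence by Proposition~\ref{red3}, $\pd(\s_1)=\pd((\s_1)_3)+2$, where $(\s_1)_3$ removes $w,v_5,v_4$. Now $(\s_1)_3$ is a string that still contains $v_2,v_3$ as a string of opens of length $2$ ending at the next closed vertex; peeling once more is not quite enough, so instead observe $(\s_1)_3$ has an endpoint (coming from $v_3$, open) — apply Lemma~\ref{red2}(ii) to get $\pd((\s_1)_3)\leq \pd(((\s_1)_3)_2)+2$. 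But $((\s_1)_3)_2$ removes $v_3,v_2$ from $(\s_1)_3$, and the resulting string is precisely $\s_5$ (the segment $v_1,v_2,v_3,v_4,v_5$ having been fully stripped from the cycle, and the $\{v_1\}$–face contributing nothing since $v_1$ was already removed in forming $\s_1$). Chaining these: $\pd(\s_1)=\pd((\s_1)_3)+2\leq \pd(\s_5)+2+2=\pd(\s_5)+4$; this is one too many, so I need to be more careful and exploit that the length-$4$ run of opens at the end of $\s_1$ makes the first peel a genuine $-3$ in vertices but only $-2$ in $\pd$, and then a second genuine peel occurs.

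So the cleaner route: apply Proposition~\ref{red3} to $\s_1$ twice. After the first application we land in a string $\T=(\s_1)_3$ whose new endpoint has an \emph{open} neighbor (since the run of opens had length $4$, removing three of them leaves one open vertex adjacent to the next closed vertex — wait, it leaves the endpoint $v_2$... I must check parities carefully). The correct bookkeeping is: $\s_1$ near this end looks like $w^{\bullet}\!-\!v_5^{\circ}\!-\!v_4^{\circ}\!-\!v_3^{\circ}\!-\!v_2^{\circ}\!-\!(\text{next closed } z)$. The neighbor of endpoint $w$ is $v_5$, open, so Proposition~\ref{red3} gives $\pd(\s_1)=\pd(\s_1\setminus\{w,v_5,v_4\})+2$; now $\s_1\setminus\{w,v_5,v_4\}$ has endpoint $v_3$ with neighbor $v_2$, open, so Proposition~\ref{red3} again gives $=\pd(\s_1\setminus\{w,v_5,v_4,v_3,v_2\})+2+2 = \pd(\s_5)+4$. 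This still gives $+4$, contradicting what I want, so the honest conclusion must be that $\pd(\s_1)$ is in fact \emph{not} bounded by $\pd(\s_5)+3$ in general, and the real content of the lemma is a \emph{modularity} argument: one shows $\pd(\s_5)+3\geq \pd(\s_1)$ by instead comparing the closed formulas $Exp(\s_1)$ and $Exp(\s_5)$ — but Theorem~\ref{string} is available, so $\pd(\s_i)=Exp(\s_i)$ for both strings. Thus the whole lemma reduces to the purely combinatorial inequality $Exp(\s_1)\leq Exp(\s_5)+3$, i.e.\ $\mu(\s_1)-b(\s_1)+M(\s_1)\leq \mu(\s_5)-b(\s_5)+M(\s_5)+3$. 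Since $\mu(\s_1)=\mu(\s_5)+5$, this is $b(\s_5)-M(\s_5)\leq b(\s_1)-M(\s_1)+2$. Reattaching $w$ and the four opens $v_5,v_4,v_3,v_2$ to the endpoint of $\s_5$: this adds $1$ to $s$, adds $\lfloor 3/3\rfloor=1$ to $\sum\lfloor (n_i-1)/3\rfloor$ \emph{unless} it merges with an existing string of opens at that end — and by hypothesis the next structure past $v_2$ in $\s_1$ is a closed vertex $z$, so no merging occurs; hence $b(\s_1)=b(\s_5)+2$. For modularity, a run of $4\equiv 1$ mod $3$ opens capped by a closed endpoint can only be the first string of opens of a $2$-special configuration, and whether it extends an existing maximal family of $\s_5$ by one changes $M$ by at most $1$ in either direction: $M(\s_5)-1\leq M(\s_1)\leq M(\s_5)+1$. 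Combining, $b(\s_1)-M(\s_1)=b(\s_5)+2-M(\s_1)\leq b(\s_5)+2-(M(\s_5)-1)=b(\s_5)-M(\s_5)+3$, which is even stronger than needed, so $Exp(\s_1)\leq Exp(\s_5)+3$ follows, and by Theorem~\ref{string} and Lemma~\ref{cycles1} we conclude $\pd(\H)=\max\{\pd(\s_1),\pd(\s_5)+3\}=\pd(\s_5)+3$. \textbf{The main obstacle} is pinning down exactly how $M$ and $b$ transform when the $5$-vertex segment $w,v_5,v_4,v_3,v_2$ is reattached — in particular verifying that no string of opens merges and tracking the $\pm1$ swing in modularity; this requires a careful case analysis of the local structure of $\s_5$ at the reattachment endpoint, which is the only genuinely fiddly part. \QED
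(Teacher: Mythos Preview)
Your argument rests on a misreading of the setup of Lemma~\ref{cycles1}. The vertices $v_4$ and $v_5$ are \emph{not} both on the $v_3$-side of $v_1$: by definition $v_4$ is the other neighbor of $v_2$ and $v_5$ is the other neighbor of $v_3$, so the local picture along the cycle is
\[
\cdots - v_4 - v_2 - v_1 - v_3 - v_5 - \cdots,
\]
with $v_2,v_4$ on one side of $v_1$ and $v_3,v_5$ on the other. Consequently $\s_1=\H_{\{v_1\}}$ is a string whose \emph{endpoints} are $v_2$ and $v_3$ (both now closed), and $\s_5$ is obtained from $\s_1$ by stripping \emph{two vertices from each end}, not four consecutive vertices from one end. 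In particular $\mu(\s_1)=\mu(\s_5)+4$, not $+5$; there is no endpoint ``$w$'' of $\s_1$ sitting past $v_5$ from which you can peel via Proposition~\ref{red3}; and your claimed run of four opens $v_2,v_3,v_4,v_5$ at one end of $\s_1$ does not exist. All of your subsequent computations of $b(\s_1)$, $M(\s_1)$, and the reduction inequality inherit this error (and there is also a sign slip: with your own $\mu$-count the needed inequality is $b(\s_5)-M(\s_5)\le b(\s_1)-M(\s_1)-2$, not $+2$).

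The hypothesis ``the string of opens containing $v_3$ has exactly $3$ open vertices'' in fact means $v_3,v_5,v_6$ are open and $v_7$ is closed, where $v_6,v_7$ are the next vertices past $v_5$. The paper exploits exactly this: in $\s_5$ the new endpoint is $v_6$, and its unique neighbor $v_7$ is closed, so Lemma~\ref{red} gives $\pd(\s_5)=\pd(\s_6)+1$. On the other hand $\s_3:=\H_{\{v_1,v_2,v_4\}}$ (removing two vertices from the $v_2$-end only) has endpoint $v_3$ with open neighbor $v_5$, so Proposition~\ref{red3} gives $\pd(\s_3)=\pd(\s_6)+2=\pd(\s_5)+1$. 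Combined with the crude bound $\pd(\s_1)\le\pd(\s_3)+2$ from Lemma~\ref{red2}, one gets $\pd(\s_1)\le\pd(\s_5)+3$, which is what is needed. The entire content of the lemma lives in this asymmetric two-ends bookkeeping; once the geometry is set up correctly the proof is short and does not require the closed formula of Theorem~\ref{string} at all.
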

\begin{proof}
Since $\s_1$ is obtained by adding $4$ vertices to $\s_5$, iterated use of Lemma \ref{red2} gives $\pd(\s_1)\leq\pd(\s_3)+2\leq \pd(\s_5)+4$, where $\s_3=\H_{\{v_1, v_2, v_4\}}$. Also, by Lemma \ref{cycles1} we have $\pd(\H)={\rm max}\{\pd(\s_1),\pd(\s_5)+3 \}$, hence it is enough to show that $\pd(\s_1)\,<\, \pd(\s_5)+4$. By the above chain of inequalities, it suffices to prove $\pd(\s_3)+2\neq \pd(\s_5)+4$, that is, $\pd(\s_3)\neq \pd(\s_5)+2$.

Let $v_6$ be the other neighbor in $\H$ of $v_5$, and $v_7$ the other neighbor in $\H$ of $v_6$. By assumption, $v_6$ is open in $\H$ and  $v_7$ is closed. Note that $v_6$ is an endpoint of $\s_5$, hence $v_6$ is closed in $\s_5$, and its only neighbor in $\s_5$ is $v_7$, which is again closed. Then, by Lemma \ref{red}, we have $\pd(\s_5)=\pd(\s_6)+1$, where $\s_6$ is obtained by removing from $\s_5$ the faces containing $v_6$.
Since $\s_3$ is obtained by appending to one end of $\s_6$ a string of 3 adjacent vertices whose second vertex is open, by Proposition \ref{red3} we have $\pd(\s_3)=\pd(\s_6)+2$. Then $\pd(\s_3)=\pd(\s_5)+1\neq \pd(\s_5)+2$.
\end{proof}

The next result captures another interesting behavior of projective dimension, similar to the one of Proposition \ref{red3}. Roughly speaking, it states that, whenever we add three adjacent open vertices to a cycle hypergraph, the projective dimension increases exactly by 2 units.  
\begin{Proposition}\label{reduc}
Let $\H'$ be a cycle hypergraph, and let $\H$ be the hypergraph obtained by adding $3$ connected open vertices to $\H'$, then $\pd(\H)=\pd(\H')+2$.
\end{Proposition}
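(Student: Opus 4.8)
The plan is to induct on $\mu(\H')$ (or, equivalently, on $\mu(\H)$), using Lemma~\ref{cycles1} as the main structural tool and the already-proved string formula (Theorem~\ref{string}) together with Lemma~\ref{cons} and Proposition~\ref{red3} to handle the degenerate and reduction steps. First I would dispose of the easy cases: if $\H'$ has no closed vertex, then neither does $\H$ (adding open vertices cannot create a closed vertex), and I would invoke Proposition~\ref{open}, checking directly that $Exp(\H)=Exp(\H')+2$ since $\mu(\H)=\mu(\H')+3$ and $\lfloor\frac{\mu}{3}\rfloor+\lceil\frac{\mu}{3}\rceil$ increases by $2$ when $\mu$ grows by $3$. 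If $\H'$ contains two adjacent closed vertices, then so does $\H$, and by Lemma~\ref{cons} both $\pd(\H')$ and $\pd(\H)$ equal their respective $Exp$ values; since inserting three connected open vertices into $\H'$ changes only $\mu$ (by $+3$) and leaves $b$ and $M$ unchanged—by the same bookkeeping as in the proof of Lemma~\ref{cons}, the new opens either lengthen an existing string of opens by $3$ (so $\lfloor\frac{n_i-1}{3}\rfloor$ goes up by $1$, and $\mu-b$ is unchanged... wait) —here I must be careful, and this is where the combinatorial accounting has to be done explicitly.

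The crux of the accounting is this: I need $Exp(\H)=Exp(\H')+2$, i.e. $b(\H)-M(\H)=b(\H')-M(\H')+1$. When three connected open vertices are grafted into a string of opens of $\H'$ already having $n$ open vertices (turning it into one with $n+3$), $b$ increases by exactly $1$ (since $\lfloor\frac{(n+3)-1}{3}\rfloor=\lfloor\frac{n-1}{3}\rfloor+1$ and $s$ is unchanged) and $M$ is unchanged (a $2$-special configuration survives the lengthening by $3$ because all the relevant residues mod $3$ are preserved, exactly as in Lemma~\ref{H3}); so $b-M$ goes up by $1$, as needed. If instead the three opens are inserted so as to create a genuinely new string of opens (splitting an edge between closed vertices, or between a closed vertex and the identified endpoint), then $s$ goes up by $1$, the new $\lfloor\frac{3-1}{3}\rfloor=0$ term contributes nothing, so $b$ again goes up by $1$; and $M$ is unchanged since a string of $3$ opens, having $3\equiv 0$ mod $3$, lies in no $2$-special configuration by Remark~\ref{2spec}. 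Either way $b-M$ increases by $1$. This part is routine but must be spelled out case by case according to where the three opens land.

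The genuine work, and the place I expect the real obstacle, is the case where $\H'$ has a closed vertex but no two adjacent closed vertices, so Lemma~\ref{cons} does not apply. Here I would choose a closed vertex $v_1$ of $\H$ and its string of opens in a convenient position relative to the three newly added opens, and apply Lemma~\ref{cycles1} to get $\pd(\H)=\max\{\pd(\s_1),\pd(\s_5)+3\}$, where $\s_1=\H_{\{v_1\}}$ and $\s_5=\H_{\{v_1,\ldots,v_5\}}$ are \emph{strings}. The idea is to arrange that $\s_5$ (or $\s_1$) contains the three new open vertices as a string-lengthening of one of its own strings of opens, so that, by the string formula (Theorem~\ref{string}), $\pd(\s_5)$ and $\pd(\s_1)$ each increase by the predictable amount when we pass from $\H'$ to $\H$; combined with the same maximum formula applied to $\H'$ (via Lemma~\ref{cycles1} for $\H'$), this forces $\pd(\H)=\pd(\H')+2$. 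The delicate point—the main obstacle—is controlling the $\max$: one has to ensure that the two arguments of the maximum shift \emph{compatibly}, i.e. that $\pd(\s_1)$ and $\pd(\s_5)+3$ either both jump by $2$ or the one realizing the maximum does, with the other not overtaking it. I anticipate needing to invoke Lemma~\ref{st} (when $v_1$ borders a string of exactly $3$ opens, $\pd(\H)=\pd(\s_5)+3$) to pin down which term wins, and to choose $v_1$ judiciously—for instance, if the three new opens can be placed so that they form, together with nearby vertices, a string of opens of length $\equiv 0$ mod $3$ adjacent to $v_1$, then both the cycle and the string computations become transparent. In the worst case I would fall back on the $\pd(\H_3)=\pd(\H)-2$-type recursion of Proposition~\ref{red3} applied inside $\s_5$ to relate $\pd(\s_5(\H))$ to $\pd(\s_5(\H'))$, and similarly for $\s_1$, reducing everything to the string statements already in hand. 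Putting these pieces together yields $\pd(\H)=\max\{\pd(\s_1),\pd(\s_5)+3\}=\max\{\pd(\s_1')+2,\pd(\s_5')+2+3\}=\pd(\H')+2$, completing the induction.
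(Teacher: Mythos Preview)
Your overall strategy matches the paper's: dispose of the all-open case via Proposition~\ref{open}, invoke Lemma~\ref{cons} when two adjacent closed vertices are present, and otherwise pick a closed vertex $v_1$ adjacent to the three inserted opens and apply Lemma~\ref{cycles1} to both $\H$ and $\H'$, so that the two arguments of the $\max$ each shift by exactly $2$ via Theorem~\ref{string} and Proposition~\ref{red3}. The $Exp$ accounting you do is correct and is precisely Remark~\ref{exp}.

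There is, however, one genuine gap. You assert that ``if $\H'$ contains two adjacent closed vertices, then so does $\H$'', and this is false: if the three open vertices are inserted on the edge joining two adjacent closed vertices of $\H'$, those vertices remain closed in $\H$ but are no longer adjacent, and $\H$ need not have any pair of adjacent closed vertices. (Take $\H'$ the $3$-cycle with exactly two closed vertices and insert the opens between them.) So Lemma~\ref{cons} cannot be applied to $\H$ here, and your reduction does not cover this case.

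This missing case is exactly where Lemma~\ref{st} is essential, not merely anticipated. The paper splits cases on $\H$ rather than on $\H'$: assuming $\H$ has no adjacent closed pair, pick $v_1$ adjacent to the three new opens $v_3,v_5,v_6$ and ask whether the old neighbour $v_3'$ of $v_1$ in $\H'$ is open or closed. If $v_3'$ is open, Lemma~\ref{cycles1} applies to $\H'$ at $v_1$ as well, and your ``both sides shift by $2$'' argument goes through. If $v_3'$ is closed (so $\H'$ has the adjacent closed pair $v_1,v_3'$ but $\H$ does not), one cannot run Lemma~\ref{cycles1} on $\H'$ at $v_1$; instead Lemma~\ref{cons} gives $\pd(\H')$ as the projective dimension of a string, Proposition~\ref{red3} and Lemma~\ref{red} relate that string to $\s_5$, and Lemma~\ref{st} (applicable because the string of opens at $v_3$ has exactly three opens) is what forces $\pd(\H)=\pd(\s_5)+3$, ruling out the possibility that $\pd(\s_1)$ overtakes. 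Minor point: no induction on $\mu(\H')$ is needed, but the small cases $\mu(\H')\le 4$ do have to be checked by hand since Lemma~\ref{cycles1} requires $\mu\ge 5$.
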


\begin{proof}
If all the vertices of $\H$ are open, then also all the vertices of $\H'$ are open, and the statement follows by Proposition \ref{open}. Also, if $\H$ contains two adjacent closed vertices, then so does $\H'$, and the statement follows by Lemma \ref{cons}. We may then assume $\H$ contains at least one closed vertex, but $\H$ does not contain two adjacent closed vertices. If $\mu(\H')=3$, then by the above, we may assume either $\H'$ contains exactly two closed vertices and the three connected open are placed in between them, or $\H'$ contains exactly one closed vertex. In the former case, the cycle $\H$ is isomorphic to $\widehat{\H}=\{\{1\}, \{1,2\}, \{2,3\}, \{3,4\}, \{4,5\}, \{5\}, \{5,6\}, \{6,1\}  \}$, whereas, in the former case, $\H$ is isomorphic to $\widehat{\H}=\{ \{1\}, \{1,2\}, \{2,3\}, \{3,4\}, \{4,5\}, \{5,6\}, \{6,1\} \}$. In either case, Lemma \ref{cycles1} gives $\pd(\H)=4$, and the statement follows because, by Lemma \ref{3or4}, one has $\pd(\H')=2$. The case where $\mu(\H')=4$ can be proved similarly.

We may then assume $\mu(\H')\geq 5$, so that $\mu(\H)\geq 8$.  Set $s=s(\H')$. Without loss of generality we may assume one of the three open vertices has a closed vertex $v_1$ as a neighbor in $\H$. Let $v_2$ and $v_3$ be the neighbors of $v_1$ in $\H$ and $v_4$ and $v_5$ be the other neighbor in $\H$ of $v_2$ and $v_3$, respectively. We may assume $v_3$, $v_5$ and the other neighbor, $v_6$, of $v_5$ are the three connected opens that have been added to $\H'$. Since $\H$ does not contain two adjacent closed vertices, we may also assume that the vertex $v_2$ is open. Now, setting $\s_1=\H_{\{v_1\}}$ and $s_5=H_{\{v_1,\ldots,v_5\}}$, we have $$\pd(\H)={\rm max}\{\pd(\s_1),\pd(s_5)+3\}.$$

Now, let $v_3'$ be the other neighbor of $v_6$ and $v_5'$ be the other neighbor of $v_3'$ in $\H$. Since $v_3$, $v_5$ and $v_6$ are the three connected open vertices added to $\H'$, then $v_3'$ is a neighbor of $v_1$ in $\H'$.
Set $\s_1'=H'_{\{v_1\}}$ and $\s_5'=\H'_{\{v_1,v_2,v_3',v_4,v_5'\}}$.

If $v_3'$ is open in $\H'$ then, by Lemma \ref{cycles1}, we have  $\pd(\H')={\rm max}\{\pd(\s'_1),\pd(\s'_5)+3\}$. Since Theorem \ref{string} yields the equalities  $\pd(\s_1)=\pd(\s'_1)+2$ and $\pd(\s_5)=\pd(\s'_5)+2$, we have
$$\pd(\H)= {\rm max}\{\pd(\s_1),\pd(\s_5)+3\}={\rm max}\{\pd(\s'_1)+2,\pd(\s'_5)+2+3\}=$$
$$={\rm max}\{\pd(\s'_1),\pd(\s'_5)+3\}+2=\pd(\H')+2.$$

We may then assume $v_3'$ is closed in $\H'$. By Lemma \ref{cons} we have $\pd(\H')=\pd(\s')$ where $\s'$ is the string $\s'=\H'\setminus E$, where $E$ denotes the face $E=\{v_1,v_3'\}$. Since $v_2$ is open in $\s'$ and is the neighbor of the endpoint $v_1$ of $\s'$, then, by Proposition \ref{red3}, $\pd(\s')=\pd(\s'_3)+2$ where $\s'_3$ is obtained by removing the vertices $v_1, v_2$ and $v_4$ from $\s'$. 
Since $\s_5$ can be obtained by appending a closed vertex to the endpoint $v_3'$ of $\s'_3$, Lemma \ref{red} gives  $\pd(\s_5)=\pd(\s'_3)+1$. Thus, by all the above, we have $$\pd(\s_5)+3=\pd(\s'_3)+4=\pd(\s')+2=\pd(\s')+2.$$ 
Hence, it suffices to show that, in this setting, $\pd(\H)=\pd(\s_5)+3$. This was proved in Lemma \ref{st}.
\end{proof}

Proposition \ref{reduc} allows us to reduce the size of the cycle hypergraph to compute its projective dimension. We illustrate this in Example \ref{3}, which is shown in Figure \ref{re3o}. 
\begin{Example}\label{3}
Let $\H=\{\{1\}, \{1,2\}, \{2,3\}, \{3,4\}, \{4,5\}, \{5\}, \{5,6\}, \{6,7\}, \{7,8\}, \{8,9\}, \{9,10\},$
 $\{10,1\} \}$, $\H'=\{\{1\}, \{1,2\}, \{2,3\}, \{3,4\}, \{4,5\}, \{5\}, \{5,6\}, \{6,7\},\{7,1\} \}$ and $\H''=\{ \{1\},$
 $\{1,2\}, \{2\}, \{2,3\}, \{3,4\}, \{4,1\} \}$.

Then $\pd(\H)=\pd(\H')+2=\pd(\H'')+4$. By Lemma \ref{3or4} we have $\pd(\H'')=3$, so that $\pd(\H)=3+4=7$.
\begin{figure}[h] 

\begin{tikzpicture}[thick, scale=0.7]
\shade [shading=ball, ball color=black] (1,0) circle (.1) node [left] {$\mathcal{H}:$  } ; 
\draw  [shape=circle] (2,0) circle (.1) ;
\draw  [shape=circle] (3,0) circle (.1) ;
\draw  [shape=circle] (4,0) circle (.1) ;
\shade [shading=ball, ball color=black] (5,0) circle (.1) ;
\draw  [shape=circle] (1,-1) circle (.1) ;
\draw  [shape=circle] (2,-1) circle (.1) ;
\draw  [shape=circle] (3,-1) circle (.1) ;
\draw  [shape=circle] (4,-1) circle (.1) ;
\draw  [shape=circle] (5,-1) circle (.1) ;
\draw [line width=1.2pt  ] (1.1,0)--(1.9,0);
\draw [line width=1.2pt  ] (2.1,0)--(2.9,0);
\draw [line width=1.2pt  ] (3.1,0)--(3.9,0);
\draw [line width=1.2pt  ] (4.1,0)--(4.9,0);
\draw [line width=1.2pt  ] (1.1,-1)--(1.9,-1);
\draw [line width=1.2pt  ] (2.1,-1)--(2.9,-1);
\draw [line width=1.2pt  ] (3.1,-1)--(3.9,-1);
\draw [line width=1.2pt  ] (4.1,-1)--(4.9,-1);
\draw [line width=1.2pt  ] (1,-0.1)--(1,-0.9);
\draw [line width=1.2pt  ] (5,-0.1)--(5,-0.9);

\shade [shading=ball, ball color=black] (7,0) circle (.1) node [left] {$\mathcal{H'}:$  } ; 
\draw  [shape=circle] (8,0) circle (.1) ;
\draw  [shape=circle] (9,0) circle (.1) ;
\draw  [shape=circle] (10,0) circle (.1) ;
\shade [shading=ball, ball color=black] (11,0) circle (.1) ;
\draw  [shape=circle] (7,-1) circle (.1) ;

\draw  [shape=circle] (11,-1) circle (.1) ;
\draw [line width=1.2pt  ] (7.1,0)--(7.9,0);
\draw [line width=1.2pt  ] (8.1,0)--(8.9,0);
\draw [line width=1.2pt  ] (9.1,0)--(9.9,0);
\draw [line width=1.2pt  ] (10.1,0)--(10.9,0);
\draw [line width=1.2pt  ] (7.1,-1)--(10.9,-1);
\draw [line width=1.2pt  ] (7,-.1)--(7,-.9);
\draw [line width=1.2pt  ] (11,-.1)--(11,-.9);

\shade [shading=ball, ball color=black] (13,0) circle (.1) node [left] {$\mathcal{H''}:$  } ; 
\shade [shading=ball, ball color=black] (17,0) circle (.1) ;
\draw  [shape=circle] (13,-1) circle (.1) ;
\draw  [shape=circle] (17,-1) circle (.1) ;
\draw [line width=1.2pt  ] (13.1,0)--(16.9,0);
\draw [line width=1.2pt  ] (13.1,-1)--(16.9,-1);
\draw [line width=1.2pt  ] (13,-.1)--(13,-.9);
\draw [line width=1.2pt  ] (17,-.1)--(17,-.9);

\end{tikzpicture}

\caption{}\label{re3o}
\end{figure}
\end{Example}
\begin{Remark}\label{exp}
Let $\H'$ be a cycle hypergraph, and let $\H$ be the hypergraph obtained by adding $3$ connected open vertices to $\H'$, then $Exp(\H)=Exp(\H')+2$.
\end{Remark}
Remark \ref{exp} and Proposition \ref{reduc} imply that $\pd(\H)=Exp(\H)$ if and only if $\pd(\H')=Exp(\H')$. Hence, in the proof of Theorem \ref{cycles2}, we may assume all the strings of opens contain at most two open vertices. 

The following remark follows immediately from the fact that every special configuration begins and ends with a string of opens having a number of open vertices that is congruent to $1$ modulo 3.
\begin{Remark}\label{modrem}
Let $\H$ be a cycle hypergraph. Assume $\H$ has exactly $r\leq s(\H)$ strings of open vertices whose number of open vertices is congruent to $1$ modulo $3$. Then $M(\H)\leq \lfloor \frac{r}{2}\rfloor$.
\end{Remark}

Recall that an open vertex of a hypergraph is {\it isolated} if all of its neighbors are closed.
\begin{Lemma}\label{modul}
Assume either 
\begin{itemize}
\item $\H$ is a string not containing 2 adjacent closed vertices except, possibly, at the endpoints, or
\item $\H$ is a cycle not containing 2 adjacent closed vertices.
\end{itemize}
If every string of opens in $\H$ contains at most two open vertices, then $M(\H)=\lfloor\frac{Is(\H)}{2}\rfloor$, where $Is(\H)$ denotes the number of isolated open vertices in $\H$.
\end{Lemma}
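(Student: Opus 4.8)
\textbf{Proof proposal for Lemma~\ref{modul}.}

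The plan is to reduce the statement to a purely combinatorial count by exploiting the rigidity imposed by the hypotheses. Since $\H$ contains no two adjacent closed vertices (except possibly at the endpoints of a string), the closed vertices of $\H$ occur as ``separators'' sitting between consecutive strings of opens, and every string of opens has either one or two open vertices. An open vertex is \emph{isolated} precisely when both of its neighbors are closed, which — given the no-two-adjacent-closed hypothesis — happens exactly for the strings of opens of length $1$ that are not at an endpoint of a string $\H$ (in the cyclic case, for every string of opens of length $1$). Thus $Is(\H)$ equals the number of interior length-$1$ strings of opens. The first step is to make this translation precise and then to recall that, by the very definition of a $2$-special configuration, its first and last strings of opens must have $n_1\equiv n_s\equiv 1\pmod 3$ and all intermediate ones $\equiv 2\pmod 3$; under our hypothesis ``length $\le 2$'' this forces $n_1=n_s=1$ and every intermediate $n_i=2$. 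In particular, a $2$-special configuration in this restricted setting is determined by choosing its two extremal length-$1$ strings and taking everything in between, provided all the intermediate strings have length exactly $2$.

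Next I would prove the two inequalities separately. For the upper bound $M(\H)\le\lfloor Is(\H)/2\rfloor$: each $2$-special configuration uses up \emph{two} of the interior length-$1$ strings of opens as its extremal pieces (in the cyclic case where the whole cycle is a single configuration, one checks directly that the number of length-$1$ strings is even and the configuration still ``costs'' two in the natural accounting), and pairwise-disjoint configurations cannot share an open vertex, hence cannot share an extremal length-$1$ string. This gives an injection from any family of pairwise-disjoint $2$-special configurations into a set of pairwise-disjoint pairs of isolated open vertices, whence the bound. This is essentially Remark~\ref{modrem} sharpened by the length restriction. For the lower bound $M(\H)\ge\lfloor Is(\H)/2\rfloor$: list the interior length-$1$ strings of opens in cyclic/linear order as $O_{i_1},\dots,O_{i_t}$ with $t=Is(\H)$, and greedily pair up consecutive ones $O_{i_1}$ with $O_{i_2}$, $O_{i_3}$ with $O_{i_4}$, and so on. One must check that the string-of-opens piece lying strictly between two paired isolated vertices consists only of length-$2$ strings — which holds automatically, because any string of opens of length $1$ strictly between them would itself be another isolated open vertex, contradicting that we paired \emph{consecutive} ones. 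Hence each such pair, together with the length-$2$ strings between them, forms a genuine $2$-special configuration, and distinct pairs are disjoint, yielding $\lfloor t/2\rfloor$ pairwise-disjoint configurations.

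The main obstacle I anticipate is the bookkeeping at the ``seams'': in the string case one must carefully exclude the endpoints (where two adjacent closed vertices are permitted) and verify that the outermost strings of opens — which may have length $1$ or $2$ but whose outer neighbor is an endpoint — are correctly \emph{not} counted as isolated, so that the greedy pairing never tries to use them; in the cycle case one must handle the wrap-around, and separately treat the degenerate situation where $\H$ is itself a single $2$-special configuration (the smallest being $\{\{1\},\{1,2\},\{2,3\},\{3\},\{3,4\},\{4,1\}\}$), checking that the formula $M(\H)=\lfloor Is(\H)/2\rfloor$ still holds there. I would also double-check the parity edge cases: when $Is(\H)$ is odd, one isolated open vertex is necessarily left unpaired, and one verifies it genuinely cannot be absorbed into any configuration without overlapping another, so that the floor is sharp. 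Once these seam and parity checks are dispatched, the two inequalities combine to give $M(\H)=\lfloor Is(\H)/2\rfloor$.
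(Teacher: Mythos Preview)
Your approach is essentially the paper's: the upper bound is exactly Remark~\ref{modrem} (under the hypothesis, the strings of opens with $n_i\equiv 1\pmod 3$ are precisely those of length~$1$, i.e., the isolated opens), and the lower bound is the same greedy pairing of consecutive isolated open vertices, noting that everything strictly between a consecutive pair consists only of length-$2$ strings.

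One correction to your ``seam'' bookkeeping: in the string case, an outermost length-$1$ string of opens (say vertex~$2$ open with vertices~$1$ and~$3$ closed) \emph{is} an isolated open vertex by definition and \emph{can} serve as the extremal piece of a $2$-special configuration---so it must be counted in $Is(\H)$ and is available to the greedy pairing. Your stated concern that such vertices ``are correctly not counted as isolated'' is backwards; once you include them, the argument runs uniformly with no special endpoint treatment needed.
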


\begin{proof}
Since the proofs in the two cases are similar, we only prove the statement for a cycle $\H$. Let $t=\lfloor\frac{Is(\H)}{2}\rfloor$ be the quotient of the division of $Is(\H)$ by 2. Note that, by assumption, if $\H$ contains a special configuration, then it must contain an isolated open vertex. Hence, if $\H$ contains no isolated open vertices, then $M(\H)=0$, and the statement follows. We may then assume $\H$ contains at least one isolated open vertex. Let $v_1,v_2,\ldots,v_{Is(\H)}$ be the isolated open vertices of $\H$, labelled in clockwise order starting from $v_1$. For every $i=1,\ldots,t$, let $\mathcal{A}_{i}$ be the string consisting of all vertices of $\H$ between the closed vertex preceding  $v_{2i-1}$ and the closed vertex following $v_{2i}$ (in clockwise order). 
Then,  the strings $\mathcal{A}_1,\ldots,\mathcal{A}_t$ are 2-special configurations in $\H$, and by construction, are all disjoint, showing that $M(\H)\leq t$.  On the other hand, by Remark \ref{modrem}, we have $M(\H)\leq t$, whence $M(I)=t=\lfloor\frac{Is(\H)}{2}\rfloor$.
\end{proof}

We can now finish the proof of Theorem \ref{cycles2}.\\

\noindent
{\bf Proof of Theorem~\ref{cycles2}.} 
\noindent
 It suffices to show $\pd(\H)=Exp(\H)$. By Lemma \ref{cons} we may assume $\H$ does not contain two adjacent closed vertices. Also, by Proposition \ref{reduc} and Remark \ref{exp}, we may assume every string of open in $\H$ contains at most two open vertices, and, by Proposition \ref{open}, we may assume there is at least one closed vertex in $\H$. In this setting, if $\H$ contains only one closed vertex, then $\H$ has at most (hence, exactly) $3$ vertices, and the statement follows by Lemma \ref{3or4}. We may then assume there are at least two adjacent strings of opens, say $\s_1$ and $\s_2$ having $n_1\geq 1$ and $n_2\geq 1$ open vertices, respectively. Note that $2\leq n_1+n_2\leq 4$, and we can choose the strings of opens so that $n_1+n_2$ is maximal. Let $v_1$ be the closed vertex separating $\s_1$ and $\s_2$, by Lemma \ref{cycles1} we have $\pd(\H)={\rm max}\{\pd(\s_1),\pd(\s_5)+3\}$. 

If $n_1=n_2=2$, we have $\mu(\s_1)=\mu(\H)-1$, $s(\s_1)=s(\H)$ and, by Lemma \ref{modul},  $M(\s_1)=M(\H)+1$, giving $\pd(\s_1)=\mu(\H)-s(\H)+M(\H)=Exp(\H)$.
Analogously, we have $\mu(\s_5)=\mu(\H)-5$, $s(\s_5)=s(\H)-2$ and $M(\s_5)=M(\H)$, showing that $\pd(\s_5)+3=Exp(\H)=\pd(\s_1)$, which implies $\pd(\H)=Exp(\H).$

We may then assume $n_1=1$ and $n_2\leq 2$. Note that if $n_2=2$ and $\H$ has exactly two strings of open, then $\H$ is a pentagon with exactly two closed vertices, which are non adjacent. Lemma \ref{cycles1} then yields $\pd(\H)=3=Exp(\H)$. We may then assume $\H$ contains $s\geq 3$ distinct strings of open vertices, and let $1\leq n_3\leq 2$ be the number of open vertices in the strings of open near $\s_1$. 
We have $s(\s_1)=s(\H)-1$ and $M(\s_1)=M(\H)$, so that $\pd(\s_1)=\mu(\H)-s(\H)+M(\H)=Exp(\H)$. Similarly to the above, elementary computations combined with Lemma \ref{modul} prove that $$s(\s_5)=\left\{\begin{array}{ll}
s(\s)-2,& \mbox{ if }n_3=2\\
s(\s)-3,& \mbox{ if }n_3=1
 \end{array}\right.\;\mbox{ and }\;
 M(\s_5)=\left\{\begin{array}{ll}
  M(\H)-1,& \mbox{ if }n_3=2\\
 M(\H),& \mbox{ if } n_3=1
  \end{array}\right.$$
In any case one obtains $\pd(\s_5)+3=\mu(\H)-s(\H)+M(\H)=Exp(\H)$, and the formula follows. 

We may then assume every open in $\H$ is isolated. Then, $s(\s_1)=s(\H)-2$, $M(\s_1)=M(\H)-1$, $s(\s_5)=s(\H)-4$ and $M(\s_5)=M(\H)-2$, and the formula follows again.
\QED
\bigskip

Similarly to the string case, there is a number of cycles whose corresponding clutter is not edgewise dominant (see \cite{DS}), hence Theorem \ref{cycles2} is not covered by the main result of \cite{DS}. See the following simple example.

\begin{Example}\label{DS}
Let $\mathcal{H}$ and $\mathcal{C}$ be the hypergraph and clutter of $I=(ab,bcd,de,efg)$, see Figure~\ref{Ccycle}. Then, by Theorem \ref{string} we have $\pd(\mathcal{H})=4-2+1=3$, hence $\pd(\H)>\,|V(\mathcal{C})|-i(\mathcal{C})=6-4$. 
\begin{figure}[h] 
\begin{tikzpicture}[thick, scale=0.7]
\shade [shading=ball, ball color=black] (0,0) circle (.1) node [left] {$\mathcal{H}:$  } ; 
\draw  [shape=circle] (1,0) circle (.1) ;
\shade [shading=ball, ball color=black] (1,-1) circle (.1) ;
\draw  [shape=circle] (0,-1) circle (.1) ;
\draw [line width=1.2pt  ] (0,0)--(0.9,0);
\draw [line width=1.2pt  ] (1,-0.1)--(1,-0.9)  ;
\draw [line width=1.2pt  ] (0.1,-1)--(0.9,-1)  ;
\draw [line width=1.2pt  ] (0,-0.1)--(0,-0.9)  ;
\shade [shading=ball, ball color=black] (4,0) circle (.1);
\shade [shading=ball, ball color=black] (5,0) circle (.1) ;

\shade [shading=ball, ball color=black] (3,-.5) circle (.1)  node [left] {$\mathcal{C}:$  } ; 
\shade [shading=ball, ball color=black] (4,-1) circle (.1) ;
\shade [shading=ball, ball color=black] (5,-1) circle (.1) ;
\shade [shading=ball, ball color=black] (6,-.5) circle (.1) ;
\path [pattern=north east lines]   (4,0)--(4,-1)--(3,-.5)--cycle;
\path [pattern=north east lines]   (5,0)--(5,-1)--(6,-.5)--cycle;
\draw [line width=1.2pt  ] (4,0)--(4.9,0);
\draw [line width=1.2pt  ] (4,-1)--(4.9,-1)  ;
\end{tikzpicture}
\caption{}\label{Ccycle}
\end{figure}
\end{Example}

We now provide an example showing the easiness of applying the formula proved in Theorem \ref{cycles2} even to hypergraphs with a fairly high number of generators.

\begin{Example}\label{14}
Let $\mathcal{H}$ be the cycle shown in Figure~\ref{cycle}, then we have $\mu(\mathcal{H})=14, s(\mathcal{H})=4$ and $M(\mathcal{H})=1$, so that $\pd(\mathcal{H})=14-4-1+1=10$.
\begin{figure}[h] 
\begin{tikzpicture}[thick, scale=0.7]
\shade [shading=ball, ball color=black] (0,0) circle (.1) ;
\draw  [shape=circle] (1,0) circle (.1) ;
\shade [shading=ball, ball color=black] (2,0) circle (.1) ;
\draw  [shape=circle] (3,0) circle (.1) ;
\draw  [shape=circle] (4,0) circle (.1) ;
\shade [shading=ball, ball color=black] (5,0) circle (.1) ;
\draw  [shape=circle] (5.5,-0.5) circle (.1) ;
\shade [shading=ball, ball color=black] (5,-1) circle (.1) ;
\draw  [shape=circle] (4,-1) circle (.1) ;
\draw  [shape=circle] (3,-1) circle (.1) ;
\draw  [shape=circle] (2,-1) circle (.1) ;
\draw  [shape=circle] (1,-1) circle (.1) ;
\draw  [shape=circle] (0,-1) circle (.1) ;
\shade [shading=ball, ball color=black] (-0.5,-0.5) circle (.1) node [left] {$\mathcal{H}:$  } ; 
\draw [line width=1.2pt  ] (0,0)--(0.9,0);
\draw [line width=1.2pt  ] (1.1,0)--(1.9,0)  ;
\draw [line width=1.2pt  ] (2.1,0)--(2.9,0)  ;
\draw [line width=1.2pt  ] (3.1,0)--(3.9,0)  ;
\draw [line width=1.2pt  ] (4.1,0)--(4.9,0)  ;
\draw [line width=1.2pt  ] (5.05,-0.05)--(5.45,-0.45)  ;
\draw [line width=1.2pt  ] (5.45,-0.55)--(5.05,-.95)  ;
\draw [line width=1.2pt  ] (4.1,-1)--(4.9,-1)  ;
\draw [line width=1.2pt  ] (3.1,-1)--(3.9,-1)  ;
\draw [line width=1.2pt  ] (2.1,-1)--(2.9,-1)  ;
\draw [line width=1.2pt  ] (1.1,-1)--(1.9,-1)  ;
\draw [line width=1.2pt  ] (0.1,-1)--(0.9,-1)  ;
\draw [line width=1.2pt  ] (-0.05,-.05)--(-0.5,-0.45)  ;
\draw [line width=1.2pt  ] (-0.5,-0.55)--(-0.1,-0.95)  ;

\end{tikzpicture}

\caption{}\label{cycle}
\end{figure}
\end{Example}
We conclude this section with a characterization of the strings and cycles whose corresponding ideals are Cohen-Macaulay. Recall that an $R$-ideal $I$ is Cohen-Macaulay if and only if ${\rm grade}(I)=\pd(I)$. The following remark is an immediate consequence of \cite[Proposition~3.3]{KTY2}. 
\begin{Remark}\label{ht}
Let $H$ be either a string or a cycle with $\mu$ vertices. Then ${\rm grade}(I(\H))=\left\lceil\frac{\mu}{2}\right \rceil$.
\end{Remark}

\begin{Theorem}\label{CM}
Let $\H$ be a hypergraph with $\mu\geq 1$ vertices.
\begin{itemize}
\item[(i)] If $\H$ is a string, then $\H$ is Cohen-Macaulay if and only if $\mu=1$, or $\mu=3$ and $\H$ is not saturated.
\item[(ii)] If $H$ is a cycle, then $I(\H)$ is Cohen-Macaulay if and only if $\mu=3$ and $\H$ is not saturated, or $\mu=5$ and $\H$ does not contain two adjacent closed vertices.
\end{itemize}
\end{Theorem}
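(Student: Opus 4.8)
The plan is to reduce everything to the closed formula $\pd(\H)=\mu(\H)-b(\H)+M(\H)$ of Theorems \ref{string} and \ref{cycles2}. By Remark \ref{ht} we have $\mathrm{grade}(I(\H))=\lceil\mu/2\rceil$, and since $\mathrm{grade}(I(\H))\le\pd(\H)$ always, combining this with the characterization of Cohen-Macaulayness recalled before Remark \ref{ht} shows that $\H$ is Cohen-Macaulay if and only if $\pd(\H)=\lceil\mu/2\rceil$; equivalently, $b(\H)-M(\H)=\lfloor\mu/2\rfloor$. So in both parts I will identify exactly the hypergraphs attaining this equality, and for all the others establish the strict inequality $\pd(\H)\ge\lceil\mu/2\rceil+1$.

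For (i), I first settle $\mu\le 3$ by hand: $\mu=1$ gives $\pd(\H)=1=\lceil1/2\rceil$; a saturated $\H$ with $\mu\in\{2,3\}$ has $\pd(\H)=\mu\ge\lceil\mu/2\rceil+1$ by Proposition \ref{saturated}; and if $\mu=3$ with the vertex $2$ open, then Remark \ref{proc} gives $\pd(\H)=\pd(\H_3)+2=2=\lceil3/2\rceil$. For $\mu\ge 4$ I prove $\pd(\H)\ge\lceil\mu/2\rceil+1$ by strong induction on $\mu$ using Remark \ref{proc}: if vertex $2$ is closed then $\pd(\H)=\pd(\H_1)+1$ with $\mu(\H_1)=\mu-1\ge 3$, and if vertex $2$ is open then $\pd(\H)=\pd(\H_3)+2$ with $\mu(\H_3)=\mu-3\ge 1$. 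Applying to the smaller string the induction hypothesis when it has at least $4$ vertices, and otherwise the exact small values ($\pd=0,1,2$ on $0,1,2$ vertices and $\pd\ge 2$ on $3$ vertices), together with $\lceil\mu/2\rceil-\lceil(\mu-1)/2\rceil\le 1$ and $\lceil\mu/2\rceil-\lceil(\mu-3)/2\rceil\le 2$, yields the bound in every case.

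For (ii), the cases $\mu\in\{3,4\}$ follow from Lemma \ref{3or4}: there $\pd(\H)=\lceil\mu/2\rceil$ precisely when $\mu=3$ and $\H$ is not saturated. For $\mu=5$ I split on whether $\H$ contains two adjacent closed vertices: if it does, Lemma \ref{cons} gives $\pd(\H)=\pd(\H')$ for a $5$-vertex string $\H'$, which by part (i) is not Cohen-Macaulay, so $\pd(\H)\ge 4$; if it does not, then at most two vertices are closed (any three vertices of a $5$-cycle contain two adjacent ones), and running the formula of Theorem \ref{cycles2} through the three configurations (no closed, one closed, two non-adjacent closed vertices) gives $b(\H)-M(\H)=2$, hence $\pd(\H)=3=\lceil5/2\rceil$. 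For $\mu\ge 6$ I prove $\pd(\H)\ge\lceil\mu/2\rceil+1$ by induction on $\mu$, splitting into three cases. (a) If $\H$ has two adjacent closed vertices, Lemma \ref{cons} reduces to a $\mu$-vertex string, not Cohen-Macaulay by (i). (b) If $\H$ has a string of opens with at least three open vertices --- which in particular covers the cases of at most one closed vertex, whose unique string of opens is long --- I remove three consecutive such vertices and apply Proposition \ref{reduc} to pass to the $(\mu-3)$-cycle $\H'$ with $\pd(\H)=\pd(\H')+2$; since $\lceil\mu/2\rceil-\lceil(\mu-3)/2\rceil$ equals $1$ for $\mu$ even and $2$ for $\mu$ odd, one gets $\pd(\H)-\lceil\mu/2\rceil\ge(\pd(\H')-\lceil(\mu-3)/2\rceil)+1\ge 1$ when $\mu$ is even, while when $\mu$ is odd the cycle $\H'$ has an even number of vertices, hence is not Cohen-Macaulay by the inductive hypothesis (or Lemma \ref{3or4}), so $\pd(\H')\ge\lceil(\mu-3)/2\rceil+1$ and again $\pd(\H)\ge\lceil\mu/2\rceil+1$. (c) In the remaining case $\H$ has no two adjacent closed vertices and every string of opens has at most two open vertices, so $b(\H)=s(\H)$ equals the number of closed vertices, $M(\H)=\lfloor Is(\H)/2\rfloor$ by Lemma \ref{modul}, and $\mu=3s(\H)-Is(\H)$; an elementary inequality then yields $s(\H)-\lfloor Is(\H)/2\rfloor<\lfloor\mu/2\rfloor$, i.e., $\pd(\H)>\lceil\mu/2\rceil$.

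The main obstacle is the $\mu\ge 6$ analysis for cycles in (ii): keeping track of how $b$, $M$, the number of strings of opens and the number $Is$ of isolated open vertices transform under the various reductions (in particular the floor terms $\lfloor(n_i-1)/3\rfloor$ and the modularity count), and making sure that the parity shifts in $\lceil\mu/2\rceil$ never let some configuration slip through with $\pd(\H)$ exactly equal to $\lceil\mu/2\rceil$. The crucial structural fact that makes the odd-$\mu$ reduction go through is precisely that a cycle on an even number of vertices is never Cohen-Macaulay.
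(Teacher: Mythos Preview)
Your proposal is correct, but it takes a different route from the paper's proof. The paper argues by a single comparison: it lets $\H_0$ be the string (respectively cycle) on $\mu$ vertices with \emph{all} vertices open, invokes Corollary \ref{compare} to get $\pd(\H)\ge\pd(\H_0)$, plugs in the explicit value of $\pd(\H_0)$ (from Corollary \ref{examples} for strings, from Proposition \ref{open} for cycles), and then checks the elementary inequality $\pd(\H_0)>\lceil\mu/2\rceil$ for $\mu\ge 4$ (strings) or $\mu\ge 6$ (cycles). The small cases $\mu\le 3$ (strings) and $\mu\in\{3,4,5\}$ (cycles) are handled directly, essentially as you do.

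Your argument instead runs an induction on $\mu$ through the recursive machinery (Remark \ref{proc} for strings; Lemma \ref{cons}, Proposition \ref{reduc}, and the formula with Lemma \ref{modul} for cycles). This works, and your case (c) computation $\mu=3s-\mathrm{Is}$, $b-M=s-\lfloor\mathrm{Is}/2\rfloor<\lfloor\mu/2\rfloor$ is a nice direct verification; the key constraint making it go through is that in case (c) one has $s\ge 2$ and $\mu\ge 6$ forces $\mathrm{Is}\le 3s-6$. The trade-off is that the paper's single comparison with $\H_0$ is shorter and avoids the three-way case split and the parity bookkeeping on $\lceil\mu/2\rceil$, while your approach has the virtue of never leaving the recursive toolkit already set up in Sections 2--4 and of making the Cohen--Macaulay obstruction visible at each reduction step.
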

\begin{proof}
(i) If $\H$ is saturated, then $\pd(\H)=\mu$ (by Proposition \ref{saturated}) and ${\rm grade}\,I(\H)=\left\lceil\frac{\mu}{2}\right \rceil$ (by Remark \ref{ht}). These two numbers are equal if and only if $\mu=1$. Hence, in this case, $\H$ is Cohen-Macaulay if and only if $\mu(\H)=1$. We may then assume $\H$ is not saturated, and note that this implies $\mu\geq 3$.  

First, assume $\mu=3$. Since $\H$ is not saturated, then $\H$ contains exactly one open vertex, then, by Lemma \ref{3or4}, we have $\pd(\H)=3-1=2$ and, by Remark \ref{ht}, ${\rm grade}\,I(\H)=\left\lceil\frac{3}{2}\right \rceil=2$, proving the Cohen-Macaulay property.

For the converse, we show that if $\mu\geq 4$, then $\H$ is not Cohen-Macaulay. Let $\H_0$ be the string of opens with $\mu\geq 4$ vertices. 
By Remark \ref{ht}, we have ${\rm grade}\,I(\H)={\rm grade}\,I(\H_0)=\left\lceil\frac{\mu}{2}\right \rceil$ and, by Corollary \ref{examples}, $\pd(\H)\geq \pd(\H_0)=\mu-1-\left\lfloor\frac{\mu-3}{3}\right\rfloor=\mu-\left\lfloor\frac{\mu}{3}\right\rfloor$, hence it suffices to observe that $\mu-\left\lfloor\frac{\mu}{3}\right\rfloor>\left\lceil\frac{\mu}{2}\right \rceil$ for every $\mu\geq 4$. This follows by elementary combinatorics.

(ii) Let $\H_0$ be the $\mu$-cycle whose vertices are all open. By Corollary \ref{compare} we have $\pd(\H)\geq \pd(\H_0)=\left\lceil\frac{\mu}{3}\right\rceil+\left\lfloor\frac{\mu}{3}\right\rfloor$ and, by Remark \ref{ht}, we have ${\rm grade}\,I(\H)={\rm grade}\,I(\H_0)=\left\lceil\frac{\mu}{2}\right \rceil$.
 If $\mu\geq 6$,  by elementary combinatorics, we have $\left\lceil\frac{\mu}{3}\right\rceil+\left\lfloor\frac{\mu}{3}\right\rfloor> \left\lceil\frac{\mu}{2}\right \rceil$, hence
 $\pd(\H)\geq\pd(\H_0)> \left\lceil\frac{\mu}{2}\right \rceil$ and then $I(\H)$ is not Cohen-Macaulay. Similarly, if $\mu= 4$, then we have ${\rm grade}\,I(\H)={\rm grade}\,I(\H_0)=2$ and, by Lemma \ref{3or4}, $\pd(\H_0)=3$, proving that $\H$ is not Cohen-Macaulay. 
 
Hence, we only need to examine the cases $\mu=3$ or $\mu=5$. If $\mu=3$, then we have ${\rm grade}\,I(\H)={\rm grade}\,I(\H_0)=2$ and, by Lemma \ref{3or4}, $\pd(\H)=2$ unless $\H$ is saturated. Hence $I(\H)$ is Cohen-Macaulay if and only if $\H$ is not saturated.
On the other hand, if $\mu=5$, we have ${\rm grade}\,I(\H)={\rm grade}\,I(\H_0)=3$. 
If $\H$ has at most one closed vertex, then, by Theorem \ref{cycles2}, we have $\pd(\H)=3={\rm grade}\,I(\H)$. If $\H$ contains exactly two closed vertices and they are not adjacent, then, 
 by Theorem \ref{cycles2}, we have again $\pd(\H)=3={\rm grade}\,I(\H)$. Conversely, if $\H$ contains two adjacent closed vertices, then $\left\lfloor\frac{n_1(\H)-1}{3}\right\rfloor=0$ and one has either $s(\H)\leq 1$ and $M(\H)=0$, or $s(\H)=2$ and $M(\H)=1$. In either case, by Theorem \ref{cycles2}, we have $\pd(\H)\geq 5-1=4>3={\rm grade}\,I(\H)$, hence $I(\H)$ is not Cohen-Macaulay.
\end{proof}
Then, the only Cohen-Macaulay ideals have deviation at most two (in fact, they also appear in the classification \cite[4.9]{KTY2}).

\section{Appendix: algorithmic procedures and more examples}

The closed formula for the projective dimension of strings and cycles involves the modularity. Since it is time-consuming for a computer program to compute this invariant, we write here a simple algorithmic procedure to compute the projective dimension of any ideal associated to a string or cycle. The procedure for strings was anticipated in Remark \ref{proc}.
Since one may want to run the algorithm on a computer, we remark here that, by Theorems \ref{string} and \ref{cycles2}, the number $\pd(\H)$ is independent of the characteristic of the base field $k$ for all strings or cycles $\H$.

\begin{Algorithm}\label{alg}
Let $\H$ be a string hypergraph. 
\begin{itemize}
\item[(0)] Let $p_0=0$ and set a flag $i=1$.
\item[(1)] Check if $\H=\emptyset$. If so, then stop the procedure, and outputs $P=p_0+p_1+\ldots+p_i$.
\item[(2)] If $\H\neq\emptyset$, 
\begin{itemize}
\item if either $\mu(\H)=1$ or $\{2\}\in \H$, 
set $p_i=1$, $i=i+1$, $\H=\H_1$, and return to step (1);
\item if $\mu(\H)\geq 2$ and $\{2\}\notin \H$, 
set $p_i=2$, $i=i+1$, $\H=\H_3$, and return to step (1).
\end{itemize}
\end{itemize}
One has $\pd(\H)=P$.
\end{Algorithm}

\begin{Example}
Let $\H$ be the hypergraph of Example \ref{ex1}. Then Algorithm \ref{alg} gives $p_1=1$, $p_2=2$ and $p_3=2$, whence $\pd(\H)=1+2+2=5$.
\end{Example}

\begin{Example}
Let $\H$, $\H'$ and $\H''$ be the hypergraphs shown in Figure \ref{2S}. Then Algorithm \ref{alg} gives
$$\pd(\H)=2+1+1=4,\;\pd(\H')=2+2+2+2+1+1=10 \; \mbox{ and } \; \pd(\H'')=2+2+2+2+1+1=10.$$
\end{Example}

\begin{Example}
Let $\H'$, $\H''$  be the hypergraphs shown in Figure \ref{exchange}. Then Algorithm \ref{alg} gives
$$\pd(\H')=2+2+2=6 \quad \mbox{ and } \quad \pd(\H'')=2+1+2+1+1=7.$$
\end{Example}

\begin{Example}
Let $\H$, $\H'$  be the hypergraphs shown in Figure \ref{Switch}. Then Algorithm \ref{alg} gives
$$\pd(\H)=2+2+1+1+2+2+1=11 \quad \mbox{ and } \quad \pd(\H')=2+2+2+2+2+1=11.$$
\end{Example}

Using Lemma \ref{cons} and Lemma \ref{cycles1}, we can employ Algorithm \ref{alg} also to compute the projective dimension of cycles. Since the projective dimension of cycles with $\mu(\H)\leq 4$ is immediately computed by Lemma \ref{3or4}, we may assume $\mu(\H)\geq 5$.
\begin{Algorithm}\label{alg2}
Let $\H$ be a cycle hypergraph with $\mu(\H)\geq 5$.
\begin{itemize}
\item If $\H$ contains only open vertices, then $\pd(\H)=\mu-1-\lfloor\frac{\mu-2}{3}\rfloor$.
\item If $\H$ contains two consecutive closed vertices $v_1$ and $v_2$ , then $\pd(\H)=\pd(\s)$, where $\s$ is the string obtained by removing the face $\{v_1,v_2\}$;
\item If $\H$ contains a closed vertex, say $v_1$, whose neighbors are open, then $\pd(\H)={\rm max}\{\pd(\s_1),$ $ \pd(\s_5)+3\}$, where $\s_1$ and $\s_5$ are as in Lemma \ref{cycles1}.
\end{itemize}
Algorithm \ref{alg} -- applied to the strings $\s$, $\s_1$ and $\s_5$ -- now computes $\pd(\H)$.
\end{Algorithm}

%

\begin{Example}\label{c2}
Let $\H=\{ \{1\}, \{1,2\}, \{2,3\}, \{3,4\}, \{4,5\}, \{5,6\}, \{6\}, \{6,1\}  \}$. Then $\pd(\H)=\pd(\s)$, where $\s=\{ \{1\}, \{1,2\}, \{2,3\}, \{3,4\}, \{4,5\}, \{5,6\}, \{6\} \}$. By Algorithm \ref{alg}, we have $\pd(\s)=2+2$, whence $\pd(\H)=4$.
\end{Example}

\begin{Example}\label{c3}
Let $H$ be the hypergraph of Example \ref{max}, that is, $\H=\{ \{1\}, \{1,2\}, \{2,3\}, \{3\}, \{3,4\},$ $ \{4,5\}, \{5,6\}, \{6,7\}, \{7\}, \{7,8\}, \{8,1\}\}$, then one has
$\pd(\H)={\rm max}\{\pd(\s_1),\pd(\s_5)+3\}$. By Algorithm \ref{alg}, one has $\pd(\s_1)=1+2+1+1+1=6$ and $\pd(\s_5)=1+1=2$, showing that $${\rm max}\{\pd(\s_1),\pd(\s_5)+3\}={\rm max}\{6,5\}=6.$$
\end{Example}

\begin{Remark}\label{improve}
 Algorithm \ref{alg2} can be simplified by means of Proposition \ref{reduc}: if $\H$ contains strings of opens having $3$ or more open vertices, replace $\H$ by $\H'$ where $\H'$ is obtained by 'removing'' three connected open vertices from $\H$, and note that $\pd(\H)=\pd(\H')+2$. 
\end{Remark}

We illustrate Remark \ref{improve}. Let $\H$ be as in Example \ref{c2}, then by Proposition \ref{reduc} we have $\pd(\H)=\pd(\H')+2$, where $\H'$ is isomorphic to $\H''=\{\{1\}, \{1,2\}, \{2,3\}, \{3\}, \{3,1\}\}$. By Lemma \ref{3or4} we have $\pd(\H'')=2$, proving that $\pd(\H)=2+2=4$. We can now revisit Example \ref{c3} avoiding the computation of the projective dimension of the two strings $\s_1$ and $\s_5$.

\begin{Example}\label{c3rev}
Let $H$ be the hypergraph of Examples \ref{max} and \ref{c3}. By Proposition \ref{reduc}, one has
$\pd(\H)=\pd(\H')+2$, where $\H'$ is isomorphic to $\H'=\{\{1\}, \{1,2\}, \{2,3\}, \{3\}, \{3,4\}, \{4,5\}, \{5\},$ $ \{5,1\}\}$. By Algorithm \ref{alg2} we have $\pd(\H')=\pd(\s)$, where $\s=\{\{1\}, \{1,2\}, \{2,3\}, \{3\}, \{3,4\},$ 
$\{4,5\}, \{5\} \}$. By Algorithm \ref{alg} we have $\pd(\s)=2+2=4$, whence $\pd(\H)=2+4=6$.
\end{Example}
\vskip-2in

\end{document}